\newfont{\sdbl}{msbm9}
\newfont{\dbl}{msbm10 at 12pt}
\theoremstyle{definition}
\newcommand{\cg}{{{\cal G}}}
\newcommand{\dz}{{\mbox{\dbl Z}}}
\newcommand{\dn}{{\mbox{\dbl N}}}
\newcommand{\ord}{\mathop{\rm ord}\nolimits}
\newcommand{\Pic}{\mathop {\rm Pic}}
\newcommand{\Sup}{\mathop {\rm Supp}}
\newcommand{\rk}{\mathop {\rm rk}}
\newcommand{\Projj}{\mathop {\rm Proj}}
\newcommand\limind{\mathop{\underrightarrow{\lim}}}
\newcommand{\idm}{\mathfrak{m}}
\newtheorem{Def}{Definition}[section]
\newtheorem{rem}{Remark}[section]
\theoremstyle{plain}
\newtheorem{Prop}{Proposition}[section]
\newtheorem{theorem}{Theorem}[section]
\newtheorem{lemma}{Lemma}[section]
\numberwithin{equation}{section}
\newcommand{\co}{{{\cal O}}}
\newcommand{\cf}{{{\cal F}}}
\newcommand{\cs}{{{\cal S}}}
\newcommand{\cm}{{{\cal M}}}
\newcommand{\Ord}{\mathop {\rm \bf ord}}
\newcommand{\crr}{{{\cal R}}}
\begin{document}
\title{Normal forms for ordinary differential operators, III}
\author{Junhu Guo \and A.B. Zheglov}
\date{}
\maketitle

\begin{abstract}
In this paper, which is  a follow-up of our first paper "Normal forms for ordinary differential operators, I", we extend the  explicit parametrisation of torsion free rank one sheaves on projective irreducible curves with vanishing cohomology groups obtained earlier to analogous parametrisation of torsion free sheaves of arbitrary rank with vanishing cohomology groups  on projective irreducible curves.

As an illustration of our theorem we calculate one explicit example of such parametrisation, namely for rank two sheaves on the Weierstrass cubic curve.   

\end{abstract}

\tableofcontents

\section{Introduction}

This paper is a follow-up of \cite{GZ24} and we use its notation. We collect all necessary notation in the list \ref{S:list} below, for details we refer to  \cite{GZ24} or to \cite[Sec. 2]{Zheglov25} for a short exposition of results from \cite{GZ24}.

In \cite{GZ24} we developed the generic theory of normal forms for ordinary differential operators, which was conceived and developed as a part of the generalised Schur theory offered in \cite{A.Z}, and applied it to obtain two applications in different directions of algebra/algebraic geometry. The first application was a new explicit parametrisation of torsion free rank one sheaves on projective irreducible curves with vanishing cohomology groups. 
The second application was a commutativity criterion for operators in the Weyl algebra or, more generally, in the ring of ordinary differential operators.

In this paper we extend the explicit description of torsion free rank one sheaves on projective irreducible curves to the torsion free sheaves with vanishing cohomology groups of arbitrary rank. As in the work \cite{GZ24}, we concentrate only on explicit parametrization and do not touch upon the question of representability of the module functor, leaving it for future work. In the second part of the paper we give in details one explicit example of such parametrisation, namely for rank two sheaves on the Weierstrass cubic curve.  
The description of such sheaves is known via the Fourier-Mukai transform, and can be found e.g. in \cite{BZ}, where this description was used to give a solution to the problem of Previato and Wilson  \cite{PW}. Recall that this is a problem of explicit description of the spectral sheaf via coefficients of commuting differential operators of orders 4 and 6 with a given spectral curve.  

To show how our parametrisation fit with calculations from \cite{BZ}, we calculate it via corresponding commuting operators of order 4 and 6 -- up to now the most well-studied explicit examples of operators of rank >1 from the work \cite{KN}, cf. also \cite{Grun}, \cite{G}, \cite{Mokh}. As a byproduct we illustrate many statements of the theory of normal forms from \cite[\S 2]{GZ24}. At the end we get a dictionary between two descriptions of spectral sheaves - one given in \cite{BZ}, and another given in terms of coefficients of normal forms from our main theorem \ref{T:parametrisation}.

To formulate our main theorem we need to introduce some notation first. Suppose $K$ is a field of characteristic zero, and $\tilde{K}$ is an algebraic closure of $K$.

\begin{Def}
\label{D:affine_spec_curve}
We'll call an affine curve $C_0$ over $K$ as {\it affine spectral curve} if it can be compactified with the help of one regular $K$-point, i.e. if there exists a projective curve $C$ over $K$ and a regular $K$-point $p$ such that $C_0\simeq C\backslash p$.
\end{Def}

Given an affine spectral curve, we can always choose a system of generators $w_1, \ldots , w_m$ of the ring $\co_{C}(C_0)$ such that the pole order (at the point $p$) of $w_1$ is coprime with the pole orders of other generators, and the images of all $w_i$ under the Krichever map after some choice of $\pi$ (see details in section \ref{S:classif_thms}),  are monic (cf. \cite[Rem 3.7]{GZ24}). 

Assume now $P,Q\in D_1=K[[x]][\partial ]$ are differential operators of positive order, where $Q$ is normalized and $P$ is monic and $[P,Q]=0$. Considering the ring of ordinary differential operators $D_1$ as a subring of a certain complete non-commutative ring $\hat{D}_1^{sym}$ (appeared first in \cite{BurbanZheglov2017} and then extensively studied in \cite{A.Z} and \cite{GZ24}), a notion of {\it a normal form} of $P$ with respect to $Q$ was offered in \cite{GZ24}. Recall that for given regular operators $P,Q\in \hat{D}_1^{sym}$ a normal form of of $P$ with respect to $Q$ is an operator  $P'=SPS^{-1}$, where $S$ is a  Schur operator $S\in \hat{D}_1^{sym}$, corresponding to the operator $Q$, i.e. $S$ is invertible, with $\Ord (S)=0$, and $SQS^{-1}=\partial^q$, where $q=\Ord (Q)$. If $P,Q$ are differential operators (i.e. $P,Q\in D_1$), then they are regular iff they have invertible highest coefficients (cf. \cite[Rem. 2.2]{Zheglov25}); in this case the Schur operator satisfies some additional properties (see \cite[\S 2.4]{GZ24}). If $[P,Q]=0$, then $P'\in C(\partial^q)\subset \hat{D}_1^{sym}$, and therefore can be presented by a matrix from the ring $M_q(K[D^q])$, see \cite[\S 3]{GZ24}. The normal form $P'$ (and the Schur operator $S$) is not uniquely defined, but up to conjugation (or, correspondingly, multiplication) by invertible zeroth order operators from the centralizer $C(\partial^q)$. If the orders of operators $P,Q$ are coprime, then this non-uniqueness can be fixed by choosing a normalised normal form, which is uniquely defined not only in its conjugacy class by invertible zeroth order operators from the centralizer, but also in its conjugacy class by arbitrary invertible operators from the centralizer (see \cite[Lem. 3.4]{GZ24}). 

If the orders of operators $P,Q$ are not coprime, the situation becomes more complicated.  We'll call a normal form $P'$ of the operator $P$ with respect to $Q$ as {\it partially normalized}, if it satisfies conditions of lemma \ref{L:normalisation} below (of course, if the orders of operators $P,Q$ are coprime, then a partially normalized normal form is the normalized form from \cite[Lem. 3.4]{GZ24}). 

Extending the notion of the  normal form for a pair of operators to the case of a finitely generated commutative ring, and following the exposition of \cite[\S 3]{GZ24}, let's introduce a notion of a partially normalised normal form of rank $r$. Spectral sheaves of rank $r$ will be parametrised in terms of its coefficients.  

\begin{Def}
\label{D:normalised_normal_form}
We'll call a commutative ring $B'\in C(\partial^q)\subset \hat{D}_1^{sym}$ generated over $K$ by monic operators $P_1'=\partial^{rq}, P_2', \ldots, P_m'$, where $r=GCD(rq, \Ord (P_2'))=\rk B':=GCD(\Ord (P')|\quad P'\in B')$ and $P_2'$ is partially normalised in the sense of lemma \ref{L:normalisation}, as a {\it partially normalised normal form} of rank $r$ with respect to a  set of generators $P_1',\ldots , P_m'$, and we'll call the coefficients of the operators $P_i'$ as coordinates of $B'$. 
\end{Def}

Clearly, 
$$
B'\simeq K[P_1',\ldots ,P_m']\simeq K[T_1,\ldots ,T_m]/I,
$$
where $I=(f_1,\ldots ,f_k)$ is a prime ideal, $f_i\in K[T_1,\ldots ,T_m]$. 

\begin{Def}
\label{D:X_B}
We define the affine algebraic set $X_{[B']}$ as a set determined by equations on coordinates of $B'$ coming from equations $f_i(P_1', \ldots , P_m')=0$ for all $i$ and from commutation relations between $P_2', \ldots , P_m'$. 

We define the equivalence relation on the closed points of this set over $K$: two points are equivalent if the corresponding operators in matrix form (i.e. $\psi\circ \hat{\Phi} (P_i')$)  
are conjugated with the help of an invertible block-diagonal matrix (one for all)
$$
\left (
\begin{array}{cccc}
T&0&\ldots &0\\
0&T&\ldots &0\\
\vdots&&\ddots&0\\
0&\ldots &&T\\
\end{array}
\right )
$$
 with  $T\in GL_r(K)$ (this is an  
invertible operator from $C(\partial^r)\subset C(\partial^{rq})\subset \hat{D}_1^{sym}$ of order $<r$) and partial normalisation (conjugation with the help of invertible zeroth order monic operators from $C(\partial^q)$ from lemma \ref{L:normalisation} -- in matrix form such operators are represented as low-triangular matrices with constant coefficients). 

We'll denote by $[x]$, $x\in X_{[B']}$ the equivalence classes of closed points in $X_{[B']}$. 
\end{Def}

\begin{theorem}
\label{T:parametrisation}
Let $C_0$ be an affine spectral curve over $K$ and let $C$ be its one-point compactification, $C_0=C\backslash p$. Assume 
$$
\co_C(C_0)\simeq K[w_1,\ldots ,w_m]\simeq K[T_1,\ldots ,T_m]/I,
$$
where $I=(f_1,\ldots ,f_k)$ is a prime ideal and the order of $w_1$ is coprime with the orders of $w_i$, $i>1$,  and the images of $w_i$ under the Krichever map (after some choice of $\pi$) are monic.

Then there exist partially normalised normal forms of rank $r$ $B'\simeq \co_C(C_0)$ with respect to the ordered set of generators $P_1'=\partial^{rq}, \ldots , P_m'$, corresponding to generators $w_i$ via this isomorphism, where $\Ord (P_i')=r\ord (w_i)$ for all $i\ge 1$, and there is a one to one correspondence between equivalence classes of closed $K$-points $[x]$, $x\in X_{[\co_C(C_0)]}$, and isomorphism classes of torsion free rank $r$ sheaves $\cf$ on $C$ with vanishing cohomology groups $H^0(C,\cf )= H^1(C,\cf )=0$.
\end{theorem}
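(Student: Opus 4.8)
The plan is to promote the explicit rank-one correspondence of \cite{GZ24} to arbitrary rank by making the higher-rank Krichever map explicit through normal forms, while tracking the residual $GL_r(K)$ gauge freedom that replaces the clean coprime normalisation available in the rank-one case; I would realise the bijection as a pair of mutually inverse assignments between equivalence classes $[x]$ and isomorphism classes of $\cf$. For the direction from sheaves to points, given a torsion free rank $r$ sheaf $\cf$ on $C$ with $H^0(C,\cf)=H^1(C,\cf)=0$, I would fix a formal parameter $\pi$ at $p$ together with a local trivialisation of $\cf$ near $p$ and feed the data $(C,p,\pi,\cf)$ into the Krichever map. The key input is that the vanishing of both cohomology groups is equivalent to the transversality (big cell) condition for the associated subspace $W=H^0(C_0,\cf|_{C_0})$ inside the appropriate completion at $p$; by Riemann--Roch this forces $\deg\cf=r(g-1)$, and transversality is exactly what guarantees existence of the Schur operator, as in \cite{GZ24}. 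Multiplication by the generators $w_i$ of $\co_C(C_0)$ on $W$, transported through the transversality isomorphism, realises each $w_i$ as a regular operator $P_i\in\hat{D}_1^{sym}$ with $\Ord(P_i)=r\,\ord(w_i)$, the factor $r$ being dictated by the rank. Conjugating by the Schur operator $S$ of the normalising operator $Q$ (of order $q=\ord(w_1)$, whose $r$-th power realises multiplication by $w_1$) sends the whole ring into $C(\partial^q)$, expresses each $P_i'=SP_iS^{-1}$ as a matrix in $M_q(K[\partial^q])$ with $P_1'=\partial^{rq}$, and, after applying Lemma \ref{L:normalisation} to $P_2'$, yields a partially normalised normal form whose coefficients are a point of $X_{[\co_C(C_0)]}$; the defining equations and commutation relations hold because $B'=K[P_1',\ldots,P_m']\simeq\co_C(C_0)$ inherits the relations of the latter.

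Next I would establish well-definedness and construct the inverse. The only freedom in the data above is the choice of $\pi$, of the local trivialisation, and of the root $Q$ and Schur operator $S$, and I would show that changing these alters the resulting point precisely by the operations generating the equivalence relation of Definition \ref{D:X_B}: the lower-triangular constant and pseudodifferential ambiguity of $S$ is absorbed by the partial normalisation of Lemma \ref{L:normalisation}, while the remaining freedom in trivialising the rank $r$ fibre of $\cf$ at $p$ reduces, after normalisation, to a constant matrix $T\in GL_r(K)$ acting as the block-diagonal operator from $C(\partial^r)$. Conversely, starting from a point $x$ I would assemble the matrix operators $P_i'$, check that they generate a commutative ring $B'\simeq\co_C(C_0)$ of rank $r$, conjugate back by $S^{-1}$ to a commutative subring of $\hat{D}_1^{sym}$, and apply the inverse Krichever construction to recover a torsion free rank $r$ sheaf with vanishing cohomology. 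Verifying that the two assignments are mutually inverse, and that equivalent points give isomorphic sheaves and conversely, then completes the bijection.

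The main obstacle, and the genuinely new point compared with \cite{GZ24}, is the non-coprime case: when $\rk B'=r>1$ the normal form cannot be made unique, and one must prove that the partial normalisation of Lemma \ref{L:normalisation} cuts the ambiguity down to exactly the block-diagonal $GL_r(K)$-action, neither more nor less. Concretely, the hard part is to identify the stabiliser of a partially normalised normal form inside the group of invertible zeroth order operators of $C(\partial^q)$ with those automorphisms of the Krichever data at $p$ that preserve $\cf$ up to isomorphism, and to show this stabiliser is precisely the constant matrices $GL_r(K)$ embedded block-diagonally. Once this matching is in place -- equivalently, once one shows that two partially normalised normal forms determine isomorphic sheaves if and only if they are conjugate by such a $T$ -- the remaining verifications (the order formula $\Ord(P_i')=r\,\ord(w_i)$, the transversality reformulation of vanishing cohomology, and the isomorphism $B'\simeq\co_C(C_0)$) are direct adaptations of the rank-one arguments of \cite{GZ24}.
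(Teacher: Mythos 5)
Your overall skeleton does match the paper's: both pass through the classification theorems (Krichever correspondence), conjugate by a Schur operator, invoke Lemma \ref{L:normalisation}, and then track the residual ambiguities. But the proposal has genuine gaps precisely at the two places where the real work of the paper happens. First, the inverse direction. From an equivalence class $[x]$ you have only the coefficients of a partially normalised form $B'$; there is no Schur operator available to ``conjugate back by $S^{-1}$'', and ``apply the inverse Krichever construction'' presupposes that you can manufacture an embedded Schur pair out of this matrix data. The paper does this by taking the operator $\tilde{S}$ of Lemma \ref{L:conjugation in E_q} for $P_2'$ and proving that $A'=\tilde{S}^{-1}\hat{\Phi}(B')\tilde{S}\subset \tilde{K}((\tilde{D}^{-r}))$, i.e.\ that all coefficients become constants; this step rests on Wilson's theorem on Burchnall--Chaundy polynomials of rank $r$ pairs together with Lemma \ref{L:solution_of_BC_equation}, and it is the \emph{only} place where the hypothesis that $\ord(w_1)$ is coprime to the $\ord(w_i)$ is used (Remark \ref{R:solution_of_BC_equation} notes the conclusion can fail without it). Your proposal never invokes this hypothesis and never establishes the constant-coefficient statement; note that without it even your forward direction is incomplete, because the independence of the chosen trivialisation is proved in the paper exactly by using that $A'\subset\tilde{K}((\tilde{D}^{-r}))$ commutes with the operator $\tilde{S}^{\oplus q}$ encoding the change of trivialisation.

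Second, the ambiguity bookkeeping is mis-attributed, and one of your two claims is false as stated. You assert that the Schur-operator ambiguity is absorbed by partial normalisation, while the block-diagonal $GL_r(K)$-action arises from the freedom in trivialising the fibre $\cf_p$. For $r>1$ partial normalisation does \emph{not} rigidify the form: Remark \ref{R:normalisation} states that a partially normalised form may have a nontrivial stabiliser and that two partially normalised forms with different coordinates can still be conjugate inside $C(\partial^{rq})$; this residual freedom of the Schur operator by invertible operators of $C(\partial^r)$ of order $<r$ is precisely where the paper's $GL_r(K)$ equivalence of Definition \ref{D:X_B} comes from. Conversely, the paper shows that a change of trivialisation, after the compatible adjustment of the Schur operator, yields \emph{literally the same} form $B'$ (the key observation involving $\hat{\psi}$ and the decomposition $M=M_0M_1$), not a $GL_r$-conjugate of it. Your attribution could perhaps be made consistent by first rigidifying the Schur operator as in Remark \ref{R:normalisation}, but then the reduction of the full $GL_r(\tilde{K}[[y]])$ trivialisation action to constant block-diagonal conjugation is exactly the computation you leave out, and it again requires the constant-coefficient lemma above. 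Finally, two smaller slips: the ring $B$ of rank $r$ contains no operator $Q$ of order $q=\ord(w_1)$ whose $r$-th power realises $w_1$ (all orders in $B$ are divisible by $r$), so the paper conjugates by the Schur operator of $P_1$ itself; correspondingly the normal forms live in $C(\partial^{rq})$ with matrix presentation in $M_{rq}(K[D^{rq}])$, not in $M_q(K[\partial^q])$.
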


It would be interesting to adopt the theory of normal forms to get a similar description of data corresponding to commuting fractional differential operators from recent paper \cite{Casper} or to get  applications concerning the bispectrality phenomena, cf. \cite{BHY},  \cite{Casper2}.

The structure of this article is the following. In section \ref{S:prelim} we recall all necessary definitions and constructions from the classification theory of rings of commuting ordinary differential operators of arbitrary rank. 
In section \ref{S:moduli} we prove our main theorem  \ref{T:parametrisation}. In section \ref{S:example} we give in details the explicit example of parametrisation of torsion free sheaves of rank two with vanishing cohomology groups over a Weierstrass cubic curve. This section is divided into subsections according to various steps of calculations and different cases from \cite{BZ}. First we recall a description of spectral sheaves on a Weierstrass cubic curve and their description in terms of coefficients of commuting operators of orders 4 and 6 from \cite{BZ}. Then, in subsection \ref{S:normal_form_ofL_6} we calculate a generic normal form of the operator $L_6$ (of order 6) with respect to the operator $L_4$. In subsection \ref{S:self-adjoint} we calculate the partially normalised normal forms in the sense of lemma \ref{L:normalisation} for this operator in the case of a self-adjoint operator $L_4$, according to the list from \cite{BZ}, and their matrix presentations.  In subsection \ref{S:non-self_adj} we do the same in the case of a non-self-adjoint operator $L_4$. At last, in subsection \ref{S:last_subsection} we compare these matrix presentation for isomorphic sheaves and find the conjugating matrices for them, confirming the statement of theorem \ref{T:parametrisation}. 

{\bf Acknowledgements.}  We are grateful to the Sino-Russian Mathematics Center at Peking University for hospitality and excellent working conditions while preparing this paper.

The work of A. Zheglov is an output of a research project implemented as part of the Basic Research Program at the National Research University Higher School of Economics (HSE University).

The work of J. Guo was partially supported by the National Key R and D Program of China (Grant No. 2020YFE0204200).

\subsection{List of notations}
\label{S:list}

 Here we recall  the most important notations used in this paper from \cite{GZ24}.
 
1. $K$ is a field of characteristic zero. Recall some notation from \cite{A.Z}:
	$\hat{R}:=K [[x_1,\ldots ,x_n]]$, the $K$-vector space 
$$
\cm_n := \hat{R} [[\partial_1, \dots, \partial_n]] = \left\{
\sum\limits_{\underline{k} \ge \underline{0}} a_{\underline{k}} \underline{\partial}^{\underline{k}} \; \left|\;  a_{\underline{k}} \in \hat{R} \right. \;\mbox{for all}\;  \underline{k} \in \dn_0^n
\right\},
$$
$\upsilon:\hat{R}\rightarrow \dn_0\cup \infty$ --  the discrete valuation defined by the unique maximal ideal $\idm = (x_1, \dots, x_n)$ of $\hat{R}$,  \\
for any element
$
0\neq P := \sum\limits_{\underline{k} \ge \underline{0}} a_{\underline{k}} \underline{\partial}^{\underline{k}} \in \cm_n
$
$$
\Ord (P) := \sup\bigl\{|\underline{k}| - \upsilon(a_{\underline{k}}) \; \big|\; \underline{k} \in \dn_0^n \bigr\} \in \dz \cup \{\infty \},
$$
$$
\hat{D}_n^{sym}:=\bigl\{Q \in \cm_n \,\big|\, \Ord (Q) < \infty \bigr\};
$$
$
P_m:= \sum\limits_{ |\underline{i}| - |\underline{k}| = m} \alpha_{\underline{k}, \underline{i}} \,  \underline{x}^{\underline{i}} \underline{\partial}^{\underline{k}}
$ -- the $m$-th \emph{homogeneous component} of $P$,\\
$\sigma (P):=P_{\Ord (P)}=P_{-d}$ -- the highest symbol.
	
2. In this paper we use: 	
	$\hat{R}:=K[[x]]$, $D_1:=\hat{R}[\partial]$, 
$$\hat{D}_1^{sym}:=\{Q=\sum_{k\ge 0}a_k\partial^k|\Ord(Q)<\infty\}.$$ 

Operators:  $\delta:=\exp((-x)\ast \partial)$,   $\int:=(1-exp((-x)\ast \partial))\cdot \partial^{-1} $,  	$A_{k;i}:=\exp((\xi^{i}-1)x\ast \partial)\in \hat{D}^{sym}_{1}\hat{\otimes}_{K}\tilde{K}$ (in the case when $k$ is fixed, simply written as $A_i$), where $\tilde{K}=K[\xi]$, $\xi$ is a primitive $k$th root of unity, 
$\Gamma_i=(x\partial)^i$. $B_n=\frac{1}{(n-1)!}x^{n-1}\delta\partial^{n-1}$. 

$\hat{D}^{sym}_{1}\hat{\otimes}_{K}\tilde{K}$ means the same ring $\hat{D}^{sym}_{1}$, but defined over the base field $\tilde{K}$.

Operators from $\hat{D}_1^{sym}$ written in the (Standard) form as 
	$$
	H=[\sum_{0\leq i<k}f_{i;r}(x, A_{k;i}, \partial )+\sum_{0<j\leq N}g_{j;r}B_{j}]D^{r},
	$$
	where $D^r= \partial^r$ if $r\ge 0$ and $D^r= \int^{-r}$ otherwise, 
	are called HCP and  form a sub-ring $Hcpc(k)$. Here $f_{i;r}(x, A_{k;i}, \partial)$ is a polynomial of $x, A_{k;i},\partial$,  $\Ord(f_{i;r})=0$,  of the form
		$$
		f_{i;r}(x, A_{k;i}, \partial )=\sum_{0\leq l\leq d_{i}}f_{l,i;r}x^l A_{k;i}\partial^l
		$$
		for some $d_i\in \dz_+$, where $f_{l,i;r}\in \tilde{K}$. The number  $d_i$ is called the {\it $x$-degree of $f_{i;r}$}:  $deg_{x}(f_{i;r}):= d_i$; $g_{j;r}\in \tilde{K}$, $g_{j;r}=0$ for $j\le -r$ if $r<0$.
	
	They can be written also in G-form: 
	$$
	H=(\sum_{0\leq i<k}\sum_{0\leq l\leq d_i} f'_{l,i;r}\Gamma_lA_i+\sum_{0<j\leq N}g_{j;r}B_{j})D^{r}
	$$
	The $A$ and $B$ Stable degrees of HCP are defined as 
	$$
	Sdeg_A(H)=\max \{d_i|\quad 0\leq i<k \} \quad \mbox{or $-\infty$, if all $f_{l,i;r}=0$ } 
	$$
	and 
	$$Sdeg_B(H)=\max\{j|\quad g_{j;r}\neq0\} \quad \mbox{or $-\infty$, if all $g_{j;r}=0$}.
	$$

	In the case when $Sdeg_B(HD^p)=-\infty,\forall p\in \mathbf{Z}$ $H$ is called {\it totally free of} $B_j$.
	
	An operator $P\in  \hat{D}_1^{sym}$ satisfies {\it condition $A_q(k)$}, $q,k\in \dz_+$, $q>1$ if 
	\begin{enumerate}
		\item
		$P_{t}$ is a HCP  from $Hcpc (q)$  for all $t$;
		\item
		$P_{t}$ is totally free of $B_j$ for all $t$;
		\item
		$Sdeg_A(P_{\Ord (P)-i})< i+k$ for all $i>0$;
		\item
		$\sigma (P)$ does not contain $A_{q;i}$, $Sdeg_A(\sigma (P))=k$.
	\end{enumerate}

3. The operator $P\in D_1$ is called {\it normalized} if $P=\partial^p+a_{p-2}\partial^{p-2}+\ldots $. The operator $P\in D_1$ is {\it monic} if its highest coefficient is 1. Analogously, $P\in \hat{D}_1^{sym}$ is monic if $\sigma (P)=\partial^p$.  

 $\mathfrak{B}=\crr_S$ is the right quotient ring of $\crr = \tilde{K}^{\oplus k} [D,\sigma ]$ by $S=\{D^k|k\ge 0\}$, $\mathfrak{B}\simeq  \tilde{K}^{\oplus k}[\tilde{D}, \tilde{D}^{-1}]$, with commutation relations $\tilde{D}^{-1}a=\sigma (a)\tilde{D}^{-1}$, $a\in \tilde{K}^{\oplus k}$, $\sigma (a_0, \ldots ,a_{k-1})=(a_{k-1}, a_0, \ldots , a_{k-2})$. And the ring of skew pseudo-differential operators 
	$$
	E_k:=\tilde{K}[\Gamma_1, A_1]((\tilde{D}^{-1}))=\{\sum_{l=M}^{\infty}P_l\tilde{D}^{-l} | \quad P_l\in \tilde{K}[\Gamma_1, A_1]\} \simeq \tilde{K}^{\oplus k}[\Gamma_1]((\tilde{D}^{-1}))
	$$
with the commutation relations 
$$
\tilde{D}^{-1}a=\sigma (a)\tilde{D}^{-1}, \quad a\in \tilde{K}[\Gamma_1, A_1] \quad \mbox{where \quad }
\sigma (A_1)= \xi^{-1} A_1, \quad \sigma (\Gamma_1)=\Gamma_1+1.
$$	
	
	$\widehat{Hcpc}_B(k)$ is the $\tilde{K}$-subalgebra in $\hat{D}_1^{sym}\hat{\otimes}\tilde{K}$ consisting of operators whose homogeneous components are HCPs totally free of $B_j$. 
	
	$$\Phi : \tilde{K}[A_1,\ldots ,A_{k-1}]\rightarrow \tilde{K}^{\oplus k} , \quad P\mapsto (\sum_i p_i\xi^{i}, \ldots , \sum_i p_i\xi^{i(k-1)})
$$
is an isomorphism of $\tilde{K}$-algebras. 	

The map 
$$
\hat{\Phi}: \widehat{Hcpc}_B(k) \hookrightarrow E_k
$$
defined on monomial HCPs from $\widehat{Hcpc}_B(k)$ as $\hat{\Phi} (a A_j\Gamma_iD^l):=a \Phi (A_j)\Gamma_i\tilde{D}^l$ and extended by linearity on the whole $\tilde{K}$-algebra $\widehat{Hcpc}_B(k)$, is an embedding of $\tilde{K}$-algebras.
	
	Suppose $B$ is a commutative sub-algebra of $D_1$, then $(C,p,\cf)$ stands for a part of the spectral data of $B$ (the spectral curve, point at infinity and the spectral sheaf with vanishing cohomologies).
	
	The classical ring of pseudo-differential operators is defined as 
	$$
E=K[[x]]((\partial^{-1})).
	$$
	
There is an isomorphism of $\tilde{K}$-algebras $\psi:\mathfrak{B}\rightarrow M_k(C(\mathfrak{B}))$, where $C(\mathfrak{B})\simeq \tilde{K}[\tilde{D}^k, \tilde{D}^{-k}]$, ($\tilde{K}$ is diagonally embedded into $\tilde{K}^{\oplus k}$):
	$$
	\psi\begin{pmatrix}
		h_0 \\
		h_1 \\
		\cdots \\
		h_{k-1}
	\end{pmatrix}=\begin{pmatrix}
		h_0 &  &  &  \\
		& h_1 &  &  \\
		&  & \cdots &  \\
		&  &  & h_{k-1}
	\end{pmatrix}\quad \psi(D)=T:=\begin{pmatrix}
		& 1 &  & \cdots &  \\
		&  & 1 & \cdots &  \\
		\cdots & \cdots & \cdots & \cdots & \cdots \\
		&  &  & \cdots & 1 \\
		D^k &  &  & \cdots & 
	\end{pmatrix}
	$$
	with $\psi(D^l)=T^l$, and extended by linearity. The map $\psi$ can be obviously extended to 
	$$
\psi : \tilde{K}^{\oplus k}((\tilde{D}^{-1})) \hookrightarrow M_k(\tilde{K}((\tilde{D}^{-k}))).
	$$

Elements of the centralizer $C(\partial^k)$ embedded to $\mathfrak{B}\subset E_k$ via $\hat{\Phi}$ we'll call as a {\it vector form} presentation, and the same elements embedded to $M_k(\tilde{K}[D^k])$ via $\psi\circ \hat{\Phi}$ -- as a {\it matrix form} presentation. 

Translating the description of the centralizer $C(\partial^k)$  into the vector form, we get that $\hat{\Phi}(C(\partial^k))$ consists of Laurent polynomials in 
$\tilde{D}$ with coefficients from $K^{\oplus k}$ and with additional conditions: the coefficient $s_i$ at $\tilde{D}^{-i}$, $i>0$, has a shape $s_{i,j}=0$ for $j=0, \ldots , i-1$.

\section{Preliminaries}
\label{S:prelim}

Suppose $K$ is a field of characteristic zero. 

We'll recall in this section necessary facts from the classification theory of rings of commuting ordinary differential operators of arbitrary rank (\cite{Kr1}, \cite{Kr2}, \cite{Dr}, \cite{Mumford_article}, \cite{SW}, \cite{Verdier}), following in part the exposition from \cite{Zheglov_book}, \cite{Mu}, \cite{Q}.  

Let $B\subset D_1=K[[x]][\partial ]$ be a commutative subring. It is called {\it elliptic} if it contains a monic differential operator of positive order. It is well known that all operators from an elliptic ring are {\it formally elliptic}, i.e. have constant highest coefficients. Two commutative elliptic subrings $B_1, B_2$  are called {\it equavalent} if there exists an invertible function $f\in K[[x]]^*$ such that 
$B_1=f^{-1}B_2f$. 

Each equivalence class $[B]$ contains a {\it normalised representative}, which determines the whole class uniquely. For example, we can choose a monic differential operator of minimal positive order in $B$, and normalise it (conjugating by an appropriate function $f\in \hat{R}$). The {\it rank} of a commutitive subring $B\subset D_1$ is the number
$$
r=\rk (B)=GCD\{\ord (P)|P\in B\}.
$$ 

Equivalence classes of commutative subrings $[B]$ of rank $r$ are classified in terms of isomorphism classes of algebraic-geometric spectral data and also in terms of equivalence classes of Schur pairs as follows. 

There are several versions of algebraic-geometric spectral data and Schur pairs, see e.g. \cite{Zheglov_book} for details. We'll need two purely algebraic version of them -- a modifications of data from \cite{Mu} and \cite{Q}. 

\subsection{Geometric data}
\label{S:geom_data}
 
\begin{Def}{(\cite[Def.9.25]{Zheglov_book})}
\label{D:proj_spec_data}
The projective spectral data of rank $r$ consists of
\begin{enumerate}
\item
a projective irreducible curve $C$ over $K$;
\item
a regular $K$-point $p\in C$;
\item
a torsion free coherent sheaf $\cf$ of rank $r$ such that $H^0(C, \cf )= H^1(C, \cf) =0$;
\item
an embedding of local rings $\pi :\hat{\co}_{C,p} \hookrightarrow K[[z]]$ such that $\pi (f)\cdot K[[z]]= z^r\cdot K[[z]]$;
\item
an isomorphism of $\hat{\co}_{C,p}$-modules $\hat{\phi}:\hat{\cf}_p \simeq z\cdot K[[z]]$
\end{enumerate}
\end{Def}

\begin{Def}
\label{D:proj_spec_data_isom}
Two projective spectral data $(C_1, p_1, \cf_1, \pi_1, \hat{\phi}_1)$, $(C_2, p_2, \cf_2, \pi_2, \hat{\phi}_2)$ are isomorphic if there exists an isomorphism of curves $\beta :C_1\rightarrow C_2$ and an isomorphism of sheaves $\psi :\cf_2 \rightarrow \beta_*\cf_1$   such that $\beta (p_1)=p_2$ and 
\begin{itemize}
\item
there is an automorphism $\bar{h}: K[[z]]\rightarrow K[[z]]$ of rings such that 
$$
\bar{h}=z+a_2z^2+\ldots 
$$
and the following diagram of ring homomorphisms is commutative:
\begin{equation}\label{E:comm_diagram1}
\begin{array}{c}
\xymatrix{
\hat{\co}_{C_1,p_1}   \ar[d]^-{\pi_1} & & \hat{\co}_{C_2,p_2}  \ar[ll]_-{\hat{\beta}_{p_2}^{\sharp}} \ar[d]^-{\pi_2}\\
K[[z]]  & & K[[z]] \ar[ll]_-{\bar{h}}\\
}
\end{array}
\end{equation}
\item
there is a $K[[z]]$-module isomorphism $\xi :z\cdot K[[z]] \rightarrow z\cdot K[[z]]$, where $z\cdot K[[z]]$ on the right hand side is a $h_*K[[z]]$-module, i.e. $a\cdot v=\bar{h}(a) \cdot v$ (and therefore, $\xi$ is just given by the rule $a\mapsto \bar{h}(a)\xi (1)$, $\xi (1)\in K[[z]]^*$), such that the following diagram of $\hat{\co}_{C_2,p_2}$-modules isomorphisms is commutative:
\begin{equation}\label{E:comm_diagram2}
\begin{array}{c}
\xymatrix{
\hat{\cf}_{2,p_2}  \ar[rr]^-{\hat{\psi}} \ar[d]^-{\hat{\phi}_2} & & \widehat{\beta_*\cf_1}_{p_2}\simeq \hat{\cf}_{1,p_1}  \ar[d]^-{\hat{\beta}_*(\hat{\phi}_1)}\\
z\cdot K[[z]] \ar[rr]^-{(\pi_2)_*\xi} & & z\cdot K[[z]] \\
}
\end{array}
\end{equation}
(more precisely, the isomorphisms in this diagram look as follows: for any $a\in \hat{\co}_{C_2,p_2}$ and $f\in \hat{\cf}_{2,p_2}$ we have 
\begin{multline*}
\hat{\psi}(a\cdot f)= \hat{\beta}_{p_2}^{\sharp}(a) \hat{\psi}(f) (= a\cdot \hat{\psi}(f)), \quad \\ \hat{\beta}_*(\hat{\phi}_1)(a\cdot \hat{\psi}(f))=\pi_1(\hat{\beta}_{p_2}^{\sharp}(a)) \hat{\phi}_1(\hat{\psi}(f)),
\end{multline*} 
\begin{multline*}
\hat{\phi}_2(a\cdot f)=\pi_2(a)\hat{\phi}_2(f), \quad \\
((\pi_2)_*\xi )(\pi_2(a)\hat{\phi}_2(f))=\bar{h}(\pi_2(a))(\xi (\hat{\phi}_2(f)))=\bar{h}(\pi_2(a)\hat{\phi}_2(f))\xi (1);
\end{multline*} 
the isomorphism $\widehat{\beta_*\cf_1}_{p_2}\simeq \hat{\cf}_{1,p_1}$ is also an isomorphism of $\hat{\co}_{C_2,p_2}$-modules, where $\hat{\cf}_{1,p_1}$ has a $\hat{\co}_{C_2,p_2}$-module structure via the homomorphism of local rings $\hat{\beta}_{p_2}^{\sharp}$.) 
\end{itemize}
\end{Def}

\begin{rem}
It is possible to extend the notion of a projective spectral datum  and define a {\it category} $\cg_r$ of data, whose objects are projective spectral data of rank $r$, see \cite{Mu}.
\end{rem} 

A slightly different version of data better suited for study of the relative version of the classification theory was offered by I. Quandt in \cite{Q}. We'll use a simplified version of her definition in this paper:

\begin{Def}{(\cite[Def.2.9]{Q})}
\label{Q2.9}
By a geometric data of rank $r$  we mean a tupel 
$$
(C, P,\rho ,\cf ,\hat{\varphi} )
$$
such that 
\begin{enumerate}
\item
$C$ is a projective irreducible curve $C$ over $K$;
\item
$p\subset C$ is a regular $K$-point; 
\item
$\rho :\hat{\co}_{C,p} \rightarrow K[[y]]$ is an isomorphism of formal $K$-algebras. 
\item
$\cf$ is a coherent sheaf of rank $r$ on $C$ such that $H^0(C, \cf )= H^1(C, \cf) =0$;
\item
$\hat{\varphi} :\hat{\cf}_p \rightarrow \hat{\co}_{C,p}^{\oplus r}$ is an isomorphism of sheaves of $\hat{\co}_{C,p}$-modules.
\end{enumerate}
\end{Def}

\begin{rem}
\label{R:Q.2.9}
The datum from definition \ref{Q2.9} can be transferred to a datum from definition \ref{D:proj_spec_data} as follows (cf. \cite[Rem. 2.13]{Q}): put $y=z^r$, then $\pi$ can be obtained from $\rho$ via the composition
$$
\hat{\co}_{C,p}\xrightarrow{\rho}K[[z^r]]\hookrightarrow K[[z]].
$$ 
To construct $\hat{\phi}$ from $\hat{\varphi}$ let's fix an isomorphism of  $\hat{\co}_{C,p}$-modules 
$$
\hat{\psi} : \hat{\co}_{C,p}^{\oplus r} \simeq K[[z]]\cdot z\quad (\alpha_0, \ldots ,\alpha_{r-1})\mapsto z^r\pi (\alpha_0)+z\pi (\alpha_1)+\ldots +z^{r-1}\pi (\alpha_{r-1}).
$$ 
Then we can put $\hat{\phi}:=\hat{\psi}\circ \hat{\varphi}$. 

The data from definition \ref{Q2.9} are more convenient for tracking what happens when choosing different trivializations.
\end{rem}

\subsection{Schur pairs and embedded Schur pairs}
\label{S:embedded}

\begin{Def}
\label{D:Q5.1} 
Let $A$ be a $K$-subalgebra of $K((z))$, and $r\in \dn$. $A$ is said to be an algebra of rank $r$ if 
$r= \gcd (\ord(a)|\quad  a\in A)$, where the order is defined in the same way as the usual order  in $D_1$.
\end{Def}

\begin{lemma}{(\cite{Mu} or \cite[L.5.2. and Rem]{Q}, \cite[Rem 9.13]{Zheglov_book})}
\label{Q5.2}
$A\subset K((z))$ is an $K$-subalgebra of rank $r$ if and only if there is a monic element $y\in K[[z]]$ of order $-r$ such that 
\begin{itemize}
\item
$A\subset K((y))$,
\item 
$K((y))/(A+K[[y]])$ is a finitely generated $K$-vector space.
\end{itemize}
\end{lemma}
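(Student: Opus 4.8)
The plan is to read Theorem \ref{T:parametrisation} as a rank-$r$ refinement of the Krichever correspondence, keeping the curve $C$ and the point $p$ fixed throughout (both are already encoded in the abstract ring $\co_C(C_0)$), so that the only moving object is the sheaf $\cf$. Concretely I would build two maps and show they are mutually inverse: a forward map $\Theta$ sending the isomorphism class of a torsion-free rank $r$ sheaf $\cf$ with $H^0(C,\cf)=H^1(C,\cf)=0$ to an equivalence class $[x]\in X_{[\co_C(C_0)]}$, and a reverse map $\Xi$ sending $[x]$ back to such an isomorphism class. Since $C$ and $p$ are fixed and the auxiliary choices $\rho,\hat{\varphi}$ (equivalently $\pi,\hat{\phi}$) will be quotiented out by the spectral-data isomorphisms of Definition \ref{D:proj_spec_data_isom} taken with $\beta=\mathrm{id}$, the resulting correspondence lands on isomorphism classes of sheaves on the fixed $C$, exactly as the theorem asserts, and not merely on full spectral data. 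The construction of $\Theta$ also yields the existence assertion — that partially normalised normal forms $B'\simeq\co_C(C_0)$ of rank $r$ with $P_1'=\partial^{rq}$ and $\Ord(P_i')=r\ord(w_i)$ exist — since one may start from any such sheaf (these exist on $C$ by a Riemann--Roch dimension count) and read off the operators $P_i'$.

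\textbf{The forward map.} Given $\cf$, I would first promote it to a geometric datum $(C,p,\rho,\cf,\hat{\varphi})$ as in Definition \ref{Q2.9} by choosing $\rho:\hat{\co}_{C,p}\to K[[y]]$ and a trivialization $\hat{\varphi}$; via Remark \ref{R:Q.2.9}, with $y=z^r$, this becomes the datum of Definition \ref{D:proj_spec_data} satisfying $\pi(f)K[[z]]=z^rK[[z]]$. Forming the Krichever pair $(A,W)$ with $A=\pi(\co_C(C_0))\subset K((z))$ and $W\subset K((z))$ the image of $H^0(C_0,\cf)$ under $\hat{\phi}$, I note that $A$ is a $K$-subalgebra of rank $r$ — because $\pi$ multiplies pole orders by $r$ and $\gcd(\ord(w_i))=1$, which is the content of Lemma \ref{Q5.2} — and that the vanishing cohomology condition is equivalent to the transversality $W\oplus zK[[z]]=K((z))$ via the \v{C}ech sequence $0\to H^0(C,\cf)\to W\oplus zK[[z]]\to K((z))\to H^1(C,\cf)\to 0$. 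Transversality is the big-cell condition, so Sato dressing gives a monic invertible zeroth-order Schur operator $S\in\hat{D}_1^{sym}$ with $W=S^{-1}(zK[[z]])$; conjugating the commuting differential operators $P_i\in D_1$ attached to the generators $w_i$ by $S$, and applying the rank-$r$ normal-form theory of \cite{GZ24}, places the whole ring in the finer centralizer $C(\partial^q)$ with $q=\ord(w_1)$, $P_1'=\partial^{rq}$, and each $P_i'$ monic of order $r\ord(w_i)$. Finally I would invoke Lemma \ref{L:normalisation} to fix the residual zeroth-order ambiguity of $S$ so that $P_2'$ is partially normalised; the coefficients of the $P_i'$ then define a point $x\in X_{[\co_C(C_0)]}$, since the $P_i'$ commute and satisfy $f_i(P_1',\dots,P_m')=0$ (because $B'\simeq\co_C(C_0)$), which are precisely the defining equations of $X_{[\co_C(C_0)]}$.

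\textbf{The reverse map and well-definedness.} Conversely, a closed $K$-point $x\in X_{[\co_C(C_0)]}$ supplies monic operators $P_1'=\partial^{rq},P_2',\dots,P_m'\in C(\partial^q)$ that commute and satisfy $f_i=0$, hence generate a partially normalised normal form $B'\simeq\co_C(C_0)$ of rank $r$. Reading off $W=S^{-1}(zK[[z]])$ from the dressing and feeding the pair $(A,W)$ into the reconstruction half of the Krichever correspondence recovers a sheaf $\cf$ on the fixed curve $C$ with vanishing cohomology; this defines $\Xi([x])$. The crux of well-definedness is to match the two quotients. On the sheaf side, the geometric datum of $\cf$ is determined only up to reparametrising the local coordinate by an automorphism $\bar{h}=z+a_2z^2+\cdots$, rescaling the trivialization by $\xi(1)\in K[[z]]^*$, and composing with automorphisms of $\cf$ (Definition \ref{D:proj_spec_data_isom} with $\beta=\mathrm{id}$). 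On the normal-form side, $B'$ is determined only up to conjugation by invertible zeroth-order operators of $C(\partial^q)$, and the partial normalisation of Lemma \ref{L:normalisation} kills all of this freedom except conjugation by the block-diagonal matrices $\mathrm{diag}(T,\dots,T)$, $T\in GL_r(K)$ — the invertible order-$<r$ operators of $C(\partial^r)\subset C(\partial^{rq})$ — together with the residual lower-triangular constant normalisation operators. I would show these two residual groups correspond under $\Theta$, so that $\Theta$ and $\Xi$ descend to mutually inverse bijections between isomorphism classes of sheaves and equivalence classes $[x]$.

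\textbf{Main obstacle.} The serious point, and the place where rank $r>1$ genuinely differs from the coprime rank-one case of \cite{GZ24}, is the gauge-matching of the last paragraph. In rank one with coprime orders the normalised normal form is rigid and the correspondence is a clean bijection; in rank $r$ the Schur operator is determined only up to the stabiliser of $\partial^{rq}$ inside the zeroth-order units of $C(\partial^q)$, and one must prove that, after the partial normalisation of Lemma \ref{L:normalisation}, what survives is exactly the block-diagonal $GL_r(K)$ acting by conjugation in the matrix form $\psi\circ\hat{\Phi}$. Equivalently, a change of the sheaf trivialization $\hat{\varphi}$ (the data $\bar{h},\xi$ and $\Aut(\cf)$) must translate under dressing into precisely such a block-diagonal conjugation plus a lower-triangular normalisation, and conversely no block-diagonal conjugation may alter the isomorphism class of the reconstructed $\cf$. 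I expect the cleanest route is to pass to matrix form via $\psi\circ\hat{\Phi}$, where the rank-$r$ sheaf becomes visible as the $r$-dimensional space on which $T\in GL_r(K)$ acts, and to compare the two stabiliser computations there; verifying that these stabilisers coincide — neither larger nor smaller — is the technical heart of the argument.
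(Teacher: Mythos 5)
There is a genuine and fundamental gap: your proposal does not address the statement at all. The statement to be proved is Lemma \ref{Q5.2}, a purely algebraic characterization of rank-$r$ subalgebras of $K((z))$: $A\subset K((z))$ has rank $r$ in the sense of Definition \ref{D:Q5.1} (i.e.\ $r=\gcd(\ord(a)\,|\,a\in A)$) if and only if there is a monic $y\in K[[z]]$ of order $-r$ with $A\subset K((y))$ and $\dim_K K((y))/(A+K[[y]])<\infty$. What you have written instead is a proof strategy for Theorem \ref{T:parametrisation}, the main parametrisation theorem of the paper. Worse, your argument is circular with respect to the actual target: in your ``forward map'' paragraph you explicitly invoke the lemma as a known ingredient (``\dots which is the content of Lemma \ref{Q5.2}''), i.e.\ you assume the very statement you were asked to prove. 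Nothing in your text constructs the element $y$ from a rank-$r$ algebra, verifies the inclusion $A\subset K((y))$, proves the finite-codimension condition, or establishes the converse implication; Schur operators, trivializations of $\cf$, the partial normalisation of Lemma \ref{L:normalisation}, and the gauge-matching of block-diagonal conjugations are all irrelevant to this lemma, which concerns only the commutative field $K((z))$ and contains no sheaves or operators.

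For calibration: the paper itself offers no proof of Lemma \ref{Q5.2}; it is quoted from \cite{Mu}, \cite[L.5.2 and Rem.]{Q} and \cite[Rem.\ 9.13]{Zheglov_book} (though it reproduces the proof of the related Proposition \ref{Q5.4}, which uses this lemma). A correct blind attempt would therefore have reproduced the standard argument of those sources. In outline: for the ``if'' direction, $\ord_z=r\cdot\ord_y$ on $K((y))$, so all orders of elements of $A$ are divisible by $r$, while finite codimension of $A+K[[y]]$ in $K((y))$ forces the order semigroup of $A$, rescaled by $1/r$, to contain all sufficiently large integers, whence the gcd is exactly $r$; for the ``only if'' direction, one takes a monic element $a\in A$ of order $rn$ (after normalizing the leading coefficient), sets $y:=a^{-1/n}$, which is a well-defined monic element of $K[[z]]$ of order $-r$ by the binomial series (this is where $\chara K=0$ enters), and then checks $A\subset K((y))$ together with the cofiniteness by a successive order-reduction argument using that the orders of $A$ form a numerical semigroup with gcd $r$. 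None of these steps appears in your proposal, so as a proof of Lemma \ref{Q5.2} it is vacuous.
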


\begin{Def}
\label{D:support} 
Let $W$ be a $K$-subspace in $K((z))$. The {\it support} of an element $w\in W$ is its highest symbol, i.e. $\sup (w):=HT(w)z^{-\ord (w)}$. The {\it support} of the space is\\
 $\Sup W:=\langle \sup (w)|\quad w\in W\rangle$. 
\end{Def}

\begin{Def}
\label{D:Schur_pair}
An embedded Schur pair of rank $r$  is a pair $(A,W)$ consisting of 
\begin{itemize}
\item
$A\subset K((z))$ a $K$-subalgebra of rank $r$ satisfying $A\cap K[[z]]=K$;
\item 
$W\subset K((z))$ a $K$-subspace with $\Sup W= K[z^{-1}]$
\end{itemize}
such that $W\cdot A\subseteq W$.
\end{Def} 

\begin{rem}
\label{R:Quandt}
It is possible to extend the notion of a Schur pair and define a {\it category} $\cs_r$ of Schur pairs, whose objects are Schur pairs of rank $r$, see \cite{Mu}. In loc. cit. the embedded Schur pairs were called as just Schur pairs; our terminology comes from \cite{Q}, where the relative version of the classification theory was given. In \cite{Q} another version of the Schur pair was used.

Namely, by a Schur pair of rank $r$  there the author meant a pair $(A ,W )$ consisting of elements $A\in K((y))$, and $W\in K((y))^{\oplus r}$ such that 
\begin{itemize}
\item 
$A$ is a $K$-subalgebra of $K((y))$ and $A\cap K[[y]]=K$,
\item
the natural action of $K((y))$ on $K((y))^{\oplus r}$ induces an action of $A$ on $W$ s.t. $A\cdot W\subseteq W$, and $W\cap K[[y]]^{\oplus r}=0$, $K((y))^{\oplus r}/(W+K[[y]]^{\oplus r})=0$. 
\end{itemize}
 
\end{rem}

The connection between Schur pairs and embedded Schur pairs is the following:

\begin{Prop}{(\cite[Prop.5.4]{Q})}
\label{Q5.4}
There is a canonical one-to-one correspondence between Schur pairs of rank $r$ and embedded Schur pairs $(A,W)$ of rank $r$ with the extra condition that
$$
A\subset K((z^r)).
$$
\end{Prop}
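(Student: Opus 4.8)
The plan is to establish the one-to-one correspondence of Proposition \ref{Q5.4} by constructing explicit maps in both directions and verifying they are mutually inverse. Recall the two notions: a Schur pair in Quandt's sense is a pair $(A,W)$ with $A\subset K((y))$ a subalgebra of rank $r$ and $W\subset K((y))^{\oplus r}$ an $A$-stable subspace complementary to $K[[y]]^{\oplus r}$; an embedded Schur pair is a pair $(A,W)$ with $A\subset K((z))$ and $W\subset K((z))$ satisfying $\Sup W = K[z^{-1}]$ and $W\cdot A\subseteq W$. The bridge between the two worlds is the substitution $y=z^r$ together with the fixed $K$-vector space (indeed $K((z^r))$-module) isomorphism
\begin{equation*}
\Theta : K((y))^{\oplus r} \xrightarrow{\ \sim\ } K((z)), \qquad (\alpha_0,\ldots,\alpha_{r-1})\mapsto \sum_{i=0}^{r-1} z^{i}\,\alpha_i(z^r),
\end{equation*}
which is the Laurent analogue of the map $\hat\psi$ appearing in Remark \ref{R:Q.2.9}. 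First I would check that $\Theta$ is an isomorphism of $K((z^r))$-modules, where $K((z^r))$ acts on the source componentwise via $y=z^r$; this is immediate since every Laurent series in $z$ decomposes uniquely according to the residue of its exponents modulo $r$.

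Given an abstract Schur pair $(A,W)$ with $A\subset K((y))$, I would set $z^r=y$, identify $A$ with a subalgebra of $K((z^r))\subset K((z))$ (this automatically gives the extra condition $A\subset K((z^r))$), and define the embedded space as $W':=\Theta(W)\subset K((z))$. The condition $A\cdot W\subseteq W$ transports under $\Theta$ to $W'\cdot A\subseteq W'$ because $\Theta$ is $K((z^r))$-linear and $A\subset K((z^r))$; here it is essential that $A$ lands in the scalar ring $K((z^r))$, which is exactly why the correspondence requires the extra hypothesis. Conversely, given an embedded Schur pair $(A,W')$ with $A\subset K((z^r))$, I would define $W:=\Theta^{-1}(W')$ and recover an abstract pair. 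The two constructions are visibly inverse to one another once $\Theta$ is fixed, so the heart of the matter is matching up the two sets of defining conditions.

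The key verification is that the support/complementarity conditions correspond. On the abstract side the conditions are $W\cap K[[y]]^{\oplus r}=0$ and $K((y))^{\oplus r}=W+K[[y]]^{\oplus r}$, i.e. $W$ is a vector-space complement to $K[[y]]^{\oplus r}$. On the embedded side the condition is $\Sup W' = K[z^{-1}]$, meaning the highest symbols of elements of $W'$ span precisely the polynomial part $K[z^{-1}]$. I would show $\Theta(K[[y]]^{\oplus r})=K[[z]]$ and $\Theta(K((y))^{\oplus r})=K((z))$, so that $W$ complements $K[[y]]^{\oplus r}$ in $K((y))^{\oplus r}$ if and only if $W'=\Theta(W)$ complements $K[[z]]$ in $K((z))$. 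Then I would argue that for a subspace $W'\subset K((z))$, being a complement to $K[[z]]$ is equivalent to $\Sup W' = K[z^{-1}]$: the support lists exactly the pole orders realized by elements of $W'$, and a dimension/filtration count (using that $K((z))/K[[z]]\cong K[z^{-1}]\cdot z^{-1}$ is filtered by pole order with one-dimensional graded pieces) shows that hitting every negative power exactly once is equivalent to projecting isomorphically onto $K((z))/K[[z]]$.

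I expect the main obstacle to be this last equivalence between the two formulations of the complementarity/support condition, since the naive statement can fail without a finiteness hypothesis. The condition $\Sup W' = K[z^{-1}]$ only controls the leading terms, so to deduce a genuine direct-sum decomposition $K((z))=W'\oplus K[[z]]$ I must invoke the rank-$r$ structure: by Lemma \ref{Q5.2} the quotient $K((y))/(A+K[[y]])$ is finite-dimensional, which via $A$-stability forces $W'$ to be commensurable with $A$-translates and hence to have the right ``size'' in each graded piece. The careful point is to rule out infinite-dimensional discrepancies between $W'$ and the span of a set of normalized generators with distinct supports; I would handle this by choosing, for each $n\ge 1$, an element of $W'$ with highest symbol $z^{-n}$ (possible since $z^{-n}\in\Sup W'$) and showing these reduced elements form a basis of a complement, with no room left over in $K[[z]]\cap W'$ precisely because the supports exhaust $K[z^{-1}]$ without repetition. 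Everything else is a routine transport of structure along the fixed isomorphism $\Theta$.
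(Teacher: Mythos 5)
Your overall strategy---transporting the two sets of conditions along an interleaving isomorphism between $K((y))^{\oplus r}$ and $K((z))$---is exactly the paper's approach, but your bridging map $\Theta$ is off by a factor of $z$, and this breaks the key verification. You interleave with the powers $z^0,\ldots,z^{r-1}$, so that $\Theta(K[[y]]^{\oplus r})=K[[z]]$, and you then assert that for a subspace $W'\subset K((z))$ ``being a complement to $K[[z]]$ is equivalent to $\Sup W'=K[z^{-1}]$''. That equivalence is false, in both directions. Since the support of an element is its \emph{lowest}-degree monomial in $z$ (Definition \ref{D:support}), the condition $\Sup W'=K[z^{-1}]$ forces $W'$ to contain an element with support $z^0$, i.e.\ an element $c+a_1z+a_2z^2+\cdots\in K[[z]]$, so $W'\cap K[[z]]\neq 0$ and $W'$ is never a complement to $K[[z]]$; conversely, $W'\cap K[[z]]=0$ is equivalent to $\Sup W'\subseteq z^{-1}K[z^{-1}]$, and complementarity to $K[[z]]$ is equivalent to $\Sup W'=z^{-1}K[z^{-1}]$. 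Consequently, with your $\Theta$ the image of a Schur pair in the sense of Remark \ref{R:Quandt} has support $z^{-1}K[z^{-1}]$ and is \emph{not} an embedded Schur pair in the sense of Definition \ref{D:Schur_pair}, while $\Theta^{-1}$ applied to an embedded Schur pair violates $W\cap K[[y]]^{\oplus r}=0$. The fix is the shift the paper builds in: identify $K[[y]]^{\oplus r}$ with $K[[z]]\cdot z=\bigoplus_{i=1}^r z^iK[[z^r]]$, i.e.\ interleave with the powers $z^1,\ldots,z^r$ (this is also the normalization of $\hat{\psi}$ in Remark \ref{R:Q.2.9} and of the trivialisation $\hat{\phi}:\hat{\cf}_p\simeq z\cdot K[[z]]$ in the geometric data). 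With that identification, complementarity to $K[[y]]^{\oplus r}$ transports to complementarity to $zK[[z]]$, which \emph{is} equivalent to $\Sup W'=K[z^{-1}]$; $A$-stability still transports, since multiplication by $z$ commutes with the $K((z^r))$-action.

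A secondary point: the ``main obstacle'' you anticipate in the last paragraph is not there, and your proposed remedy (invoking Lemma \ref{Q5.2} and $A$-stability to control the size of $W'$) is unnecessary. The equivalence between complementarity to $zK[[z]]$ and $\Sup W'=K[z^{-1}]$ is elementary: the inclusion $\Sup W'\subseteq K[z^{-1}]$ is literally the statement $W'\cap zK[[z]]=0$, and for surjectivity onto $K((z))/zK[[z]]$ one clears the polar part of a Laurent series $f$ of order $N\ge 0$ by subtracting suitable multiples of elements of $W'$ with supports $z^{-N},z^{-N+1},\ldots$; the order strictly drops at each step, so after at most $N+1$ steps the remainder lies in $zK[[z]]$, and what was subtracted is a \emph{finite} sum, hence in $W'$. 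No infinite process and no finiteness hypothesis on $A$ enters here. The rank-$r$ structure and Lemma \ref{Q5.2} are needed only where the paper uses them: to see that $A$, viewed inside $K((y))$ via $y=z^r$, is a subalgebra of rank one.
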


For reader's convenience we copy the proof here, because we use it in our explicit constructions:
\begin{proof}
Let us start with an embedded Schur pair $(A,W)$ of rank $r$ such that $A\subset K((z^r))$. Set $y:=z^r$. Then, by lemma  \ref{Q5.2}, $A\subset K((y))$ is a subalgebra of rank one.  Now we identify:
$$
K[[z]]\cdot z=\bigoplus_{i=1}^r K[[z^r]]\cdot z^i= K[[y]]^{\oplus r}.
$$ 
This identification extends to an isomorphism of $K((z))$ with $K((y))^{\oplus r}$ and so we end up with $W\subset K((y))^{\oplus r}$. Since $K[[z]]\cdot z$ translates into $K[[y]]^{\oplus r}$, $W$ is a subspace  such that $W\cap K[[y]]^{\oplus r}=0$, $K((y))^{\oplus r}/(W+K[[y]]^{\oplus r})=0$.

That also clarifies the inverse construction. We formally set $y:=z^r$ and translate the data back using lemma \ref{Q5.2}. 
\end{proof}

\begin{Def}
\label{D:equiv_Schue_pair}
Two embedded Schur pairs   $(A_i, W_i)$, $i=1,2$ of rank $r$ are {\it equivalent} if there exists an admissible operator $T$ such that $A_1=T^{-1}A_2T$, $W_1=W_2\cdot T$. An operator $T=t_0+t_1\partial^{-1}+\ldots $ is called {\it admissible} if $T^{-1}\partial T\in K((\partial^{-1}))$. 
\end{Def}

\begin{theorem}
\label{T:classif_Schur_pairs}
There are one-to-one correspondences  $\mbox{$[B]$ of rank $r$}\longleftrightarrow \mbox{$[(A,W)]$ of rank $r$}$ and $\mbox{$[B]$ of rank $r$}/\thicksim\longleftrightarrow \mbox{$[(A,W)]$ of rank $r$}/\thicksim$, where $(A,W)$ means the embedded Schur pair, and $\thicksim$ means a scale automorphism $x\mapsto c^{-1}x$, $\partial\mapsto c\partial$. 
\end{theorem}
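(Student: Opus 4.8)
The plan is to realise both bijections through the Sato--Wilson dressing (Schur) operator, working inside the pseudo-differential ring $E=K[[x]]((\partial^{-1}))$ with the identification $z=\partial^{-1}$, so that $K((z))=K((\partial^{-1}))$ is the subring of constant-coefficient operators. I regard $E$ as acting faithfully on $V:=K((z))$ by the standard left action in which $\partial$ is multiplication by $z^{-1}$ (and $x$ the corresponding derivation); then $K[\partial]\cdot 1=K[z^{-1}]$, an element of $K((z))$ acts by multiplication, and $D_1$ consists precisely of those $L\in E$ with $L\cdot K[z^{-1}]\subseteq K[z^{-1}]$.

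\emph{From $[B]$ to $[(A,W)]$.} I would first replace $B$ by its normalised representative (each class contains one). Choosing a monic $P\in B$ of positive order $n$, I construct the dressing operator $S=1+s_1\partial^{-1}+\cdots\in E$, $s_i\in K[[x]]$, with $S^{-1}PS=\partial^n$ (the formal $n$-th root of $P$ dresses $\partial$). Since every $Q\in B$ commutes with $P$, the conjugate $S^{-1}QS$ commutes with $\partial^n$, hence lies in its centraliser $K((\partial^{-1}))=K((z))$; thus $A:=S^{-1}BS\subset K((z))$ is a commutative $K$-subalgebra, of rank $r=\rk B$ since conjugation preserves orders, and $A\cap K[[z]]=K$ because an element of non-negative $z$-order conjugates back to a differential operator of order $\le 0$, i.e. to a constant. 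So $A$ has rank $r$ in the sense of Definition \ref{D:Q5.1}. I then set $W:=S^{-1}\cdot K[z^{-1}]\subset V$. As $S^{-1}=1+(\text{lower order})$, the supports of $W$ are exactly $z^{-j}$, $j\ge 0$, so $\Sup W=K[z^{-1}]$; and for $Q\in B$, $p\in K[z^{-1}]$ one computes $(S^{-1}QS)\cdot(S^{-1}p)=S^{-1}(Qp)\in S^{-1}K[z^{-1}]=W$, using $Qp\in K[z^{-1}]$, whence $W\cdot A\subseteq W$. Thus $(A,W)$ is an embedded Schur pair of rank $r$ (Definition \ref{D:Schur_pair}).

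\emph{From $[(A,W)]$ to $[B]$.} Conversely, $\Sup W=K[z^{-1}]$ means $W$ is transverse to $zK[[z]]$, so by the standard Birkhoff-type factorisation (the big-cell parametrisation of the Sato Grassmannian) there is a unique order-zero $S\in 1+E^{<0}$ with $W=S^{-1}K[z^{-1}]$; turning on the single ``time'' $x$ recovers the $x$-dependence of the coefficients of $S$. I set $B:=SAS^{-1}$. The crucial point is that $B\subset D_1$: for $a\in A$ multiplication by $a$ preserves $W$, so $SaS^{-1}$ preserves $K[z^{-1}]$ under the $E$-action, hence $SaS^{-1}$ is a differential operator. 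Then $B$ is a commutative elliptic subring of rank $r$ ($A\cap K[[z]]=K$ forces only constants in order $0$, and rank is preserved). At the level of the operators $S$ these two assignments are mutually inverse.

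\emph{Matching equivalences and scaling; main obstacle.} Finally I would check that the ambiguity in $(A,W)$ matches Definition \ref{D:equiv_Schue_pair}: the dressing $S$ attached to $B$ is determined only up to right multiplication by the centraliser of $\partial^n$ (the choice of $P$ and of its root), while replacing $B$ by a function-conjugate $f^{-1}Bf$ sends $S\mapsto f^{-1}S$; a direct computation shows these together act on the pair as $A\mapsto T^{-1}AT$, $W\mapsto W\cdot T$ for an admissible operator $T$, and conversely every admissible $T$ so arises, giving well-defined mutually inverse bijections $[B]\leftrightarrow[(A,W)]$. The second correspondence is the first made equivariant for the scale automorphism $x\mapsto c^{-1}x$, $\partial\mapsto c\partial$, which merely rescales $z$ and commutes with dressing, so the bijection descends to the quotients by $\thicksim$. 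I expect the essential difficulty to be the inverse direction: showing that the module condition $W\cdot A\subseteq W$ forces $B=SAS^{-1}$ to lie in $D_1$ rather than merely in $E$, and reconstructing the $x$-dependence of $S$ cleanly from the single Grassmannian point $W$; verifying that the two natural ambiguities combine into exactly the admissible-operator equivalence of Definition \ref{D:equiv_Schue_pair} is the remaining bookkeeping.
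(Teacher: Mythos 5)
Your proposal is correct and takes essentially the same route as the paper: both rest on the Schur--Sato theory --- a Schur operator $S$ conjugating the (normalised) ring $B$ into $K((\partial^{-1}))$, the Sato module structure making $W$ the dressed copy of $K[\partial]$ (with $W\cdot A\subseteq W$ verified by the same one-line computation), reconstruction of $B$ from $(A,W)$ via the unique Sato operator of the big cell, and the equivalence/scaling bookkeeping left as the final step. The only divergence is notational: you realise $E$ by its left action on $K((z))$ and set $W=S^{-1}\cdot K[z^{-1}]$, whereas the paper uses the right module $E/xE\simeq K((\partial^{-1}))$ (the Sato homomorphism) and sets $W=F\cdot S$ with $F=K[\partial]$; the paper itself likewise only sketches these steps, deferring the full details to \cite[10.3]{Zheglov_book} and \cite{Mu}.
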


A self contained proof of this theorem see e.g. in \cite[10.3]{Zheglov_book}. The proof is base on the following elements of the Schur-Sato theory.

Let $B\subset D_1$ be an elliptic subring. Then by Schur theory from \cite{Schur}, cf. \cite[T. 4.6, C. 4.7]{Zheglov_book}, there exists an invertible operator $S=s_0+s_1\partial^{-1} +\ldots $ in the usual (Schur's) ring of pseudo-differential operators $E=K[[x]]((\partial^{-1}))$ such that $A:=S^{-1}BS \subset K((\partial^{-1}))$. Consider the homomorphism (of vector spaces)
\begin{equation}
\label{E:Sato_hom}
E\rightarrow E/xE \simeq K((\partial^{-1}))
\end{equation}
(sometimes it is called {\it the Sato homomorphism}). It defines a structure of an $E$-module on the space  $K((\partial^{-1}))$: for any $P\in K((\partial^{-1}))$, $Q\in E$ we put  $P\cdot Q= PQ$ (mod $xE$). 

Analogously, the homomorphism 
$$
1\circ :\tilde{K}[A_1]((\tilde{D}^{-1}))\simeq \tilde{K}^{\oplus q}((\tilde{D}^{-1})) \subset E_q\rightarrow  \tilde{K}((\tilde{D}^{-1})), \quad \sum_l p_l\tilde{D}^l \mapsto  \sum_l p_{l,0}\tilde{D}^l
$$
(cf. \cite[Lem 3.3]{GZ24}) defines a structure of a $\tilde{K}[A_1]((\tilde{D}^{-1}))$-module on the space  $\tilde{K}((\tilde{D}^{-1}))$: for any $P\in \tilde{K}((\tilde{D}^{-1}))$ and $Q\in \tilde{K}[A_1]((\tilde{D}^{-1}))$ we put $P\cdot Q= 1\circ (PQ)$. Analogously, $K((\tilde{D}^{-1}))\subset \tilde{K}((\tilde{D}^{-1}))$ is a right $K^{\oplus q}((\tilde{D}^{-1}))$-module. 

Now define the space $W:= F\cdot S \subset K((\partial^{-1}))$, where $F=K[\partial ]$. Note that $W$ is an $A$-module, where the module structure is defined via the multiplication in the {\it field} $K((\partial^{-1}))$ and this module structure is induced by the $E$-module structure on $K((\partial^{-1}))$, because $K((\partial^{-1}))\subset E$ and $W\cdot A=(F\cdot S)\cdot (S^{-1}BS)=F\cdot (BS)=(F\cdot B)\cdot S= F\cdot S= W$. 
Note also that the modules $W$ and $F$ are isomorphic ($W$ is an $A$-module, $F$ as a $B$-module, and clearly $A\simeq B$). For convenience of notation, we will replace $\partial^{-1}$ by $z$ in the field $K((\partial^{-1}))$, i.e. $A,W\subset K((z))\simeq K((\partial^{-1}))$. The ring $B$ can be reconstructed from $(A,W)$ with the help of the Sato operator, cf. \cite{Mu}. 

Analogously, we can define the space $W':=F'\cdot S\subset \tilde{K}((\tilde{D}^{-1}))$, where $F'=\tilde{K}[\tilde{D}]$ and $S\in \tilde{K}[A_1]((\tilde{D}^{-1}))$ is an operator from lemma \cite[Lem. 3.3]{GZ24} (cf. lemma \ref{L:conjugation in E_q} below) or $W'':=F''\cdot S\subset K((\tilde{D}^{-1}))$, where $F''=K[\tilde{D}]$ if $S\in K^{\oplus q}((\tilde{D}^{-1}))$.

\subsection{Classification theorems}
\label{S:classif_thms}

The classification of commutative subrings in terms of algebraic-geometric spectral data is based on the construction of the Krichever map. 

Recall that the Krichever map 
$$
\chi_0:(C,p,\cf ,\pi , \hat{\phi} ) \rightarrow (A,W) \quad \chi_0:(C,p,\cf ,\rho , \hat{\varphi} ) \rightarrow (A,W)
$$
is defined for data from definition \ref{D:proj_spec_data} (for details see \cite[Ch.10]{Zheglov_book}) or for data from definition \ref{Q2.9} (and $(A,W)$ is an embedded Schur pair in the first case and a Schur pair from remark \ref{R:Q.2.9} in the second case). 

For data from definition \ref{D:proj_spec_data} it is defined as the embeddings 
$H^0(C\backslash p, \co_C )\simeq \limind_{n\ge 0} H^0(C, \co_C (np)) \hookrightarrow K((z)) \mbox{,}$
$H^0(C\backslash p, \cf )\simeq \limind_{n\ge 0} H^0(C, \cf (np)) \hookrightarrow K((z)) \mbox{,}$ 
through the natural maps
$$
\alpha_n: H^0(C, \cf (np))\hookrightarrow {\cf (np)}_p\simeq f^{-n} ({\cf }_p) \hookrightarrow K((z)) 
$$
defined for any torsion free sheaf $\cf$ and $n\ge 0$, where the last embedding is the embedding $f^{-n}{\cf }_p \stackrel{\hat{\phi} }{\hookrightarrow } 
\pi (f)^{-n} K[[z]]\cdot z {\hookrightarrow} K((z))$, and $f$ is a local generator of the ideal $\idm_p$.

For data from definition \ref{Q2.9} it is defined similarly: 
$$
A = \rho (H^0(C\backslash p,\co_C))\subset K((y)),
$$
$$
W=\rho \circ \hat{\varphi} (H^0(C\backslash p,\cf ))\subset K((y))^{\oplus r}. 
$$

The inverse geometric constructions (from Schur pairs to geometric data) is described in \cite[\S 9]{Zheglov_book}. Recall the construction of the curve and the sheaf from the embedded Schur pair $(A,W)$ of rank $r$: set $\tilde{A}=\oplus_{i\ge 0}A_is^i$, where $A_i:=\{a\in A|\quad \ord (a)\le ir\}$, and set 
$\tilde{W}=\oplus_{i\ge 0}W_{ir-1}s^i$, where $W_i=\{w\in W| \quad \ord (w)\le i\}$. Then
$$
C=\Projj \tilde{A}, \quad \cf =\Projj \tilde{W},
$$
and the point $p$ is given by the homogeneous ideal $(s)$ in $\tilde{A}$.

Using theorem \ref{T:classif_Schur_pairs} and inverse geometric constructions the following classification theorem can be proved:
\begin{theorem}{(\cite[Th. 10.26]{Zheglov_book})}
\label{T:classif2}
There are  one-to-one correspondences 
$$
[B\subset {D_1}\mbox{\quad of rank $r$}]  \longleftrightarrow  [(C,p,\cf , \pi, \hat{\phi} )\mbox{\quad of rank $r$}]/\simeq \longleftrightarrow  [(C,p,\cf , \rho, \hat{\varphi} )\mbox{\quad of rank $r$}]/\simeq
$$
where 
\begin{itemize}
\item
$[B]$ means a class of equivalent commutative elliptic subrings (i.e. $B$ containing a monic differential operator),  where  $B\sim B'$ iff $B=f^{-1}B'f$, $f\in D_1^*=K[[x]]^*$. 
\item $C$ is a (irreducible and reduced) projective curve over $K$, $p$ is a regular $K$-point and $\cf$ is a torsion free sheaf of rank $r$ with $H^0(C, \cf)=H^1(C,\cf )=0$ (a spectral sheaf). 
\item $\simeq$ mean isomorphisms of data. 
\end{itemize}
\end{theorem}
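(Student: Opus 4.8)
The plan is to factor both correspondences through the classification of embedded Schur pairs already recorded in Theorem \ref{T:classif_Schur_pairs}, and then to pass between the two flavours of geometric data by the explicit translation of Remark \ref{R:Q.2.9}. Concretely, for the leftmost bijection I would compose the correspondence $[B]\longleftrightarrow[(A,W)]$ of Theorem \ref{T:classif_Schur_pairs} (the version \emph{without} the quotient by the scale automorphism, since here $[B]$ is taken only modulo conjugation by a unit $f\in K[[x]]^*$, which is exactly the equivalence of elliptic subrings defined in Section \ref{S:prelim}) with the Krichever correspondence $[(A,W)]\longleftrightarrow[(C,p,\cf,\pi,\hat\phi)]/\simeq$. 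For the rightmost bijection I would show that the assignment of Remark \ref{R:Q.2.9}, sending a datum $(C,p,\rho,\cf,\hat\varphi)$ of Definition \ref{Q2.9} to a datum $(C,p,\cf,\pi,\hat\phi)$ of Definition \ref{D:proj_spec_data} via $y=z^r$, $\pi:\hat\co_{C,p}\xrightarrow{\rho}K[[z^r]]\hookrightarrow K[[z]]$ and $\hat\phi=\hat\psi\circ\hat\varphi$, descends to a bijection on isomorphism classes, the matching on the Schur-pair side being supplied by Proposition \ref{Q5.4}.

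The core of the leftmost bijection is that the Krichever map and its inverse geometric construction are mutually inverse up to the prescribed equivalences. In one direction, given a datum $(C,p,\cf,\pi,\hat\phi)$, I would form
$$
A=\limind_{n\ge 0}H^0(C,\co_C(np))\hookrightarrow K((z)),\qquad W=\limind_{n\ge 0}H^0(C,\cf(np))\hookrightarrow K((z))
$$
through the maps $\alpha_n$; lemma \ref{Q5.2} guarantees that $A$ is a $K$-subalgebra of rank $r$, the finiteness of $K((y))/(A+K[[y]])$ following from the properness of $C$, while the cohomology vanishing $H^0(C,\cf)=H^1(C,\cf)=0$ is exactly what forces $W$ to be transverse to $K[[z]]$, i.e.\ $\Sup W=K[z^{-1}]$ as in Definition \ref{D:Schur_pair}, so that $(A,W)$ is a genuine embedded Schur pair of rank $r$. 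In the other direction I would run the inverse construction $C=\Projj\tilde A$, $\cf=\Projj\tilde W$ with the gradings $\tilde A=\oplus_{i\ge 0} A_i s^i$ and $\tilde W=\oplus_{i\ge 0} W_{ir-1}s^i$, the point $p$ being cut out by the homogeneous ideal $(s)$, and check that $\chi_0$ returns the original pair; regularity of $p$, torsion-freeness of $\cf$, and the rank being $r$ are all read off from the orders occurring in $A$ and $W$.

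The main obstacle, and the step demanding the most care, is to match the \emph{equivalence relations} on the two sides. On the geometric side an isomorphism of data is governed by the commutative diagrams \eqref{E:comm_diagram1} and \eqref{E:comm_diagram2}, that is, by a pair $(\bar h,\xi)$ consisting of a change of local coordinate $\bar h=z+a_2z^2+\cdots$ and a module trivialisation $\xi$ with $\xi(1)\in K[[z]]^*$; on the Schur-pair side the freedom is precisely conjugation $A_1=T^{-1}A_2T$, $W_1=W_2\cdot T$ by an admissible operator $T$ in the sense of Definition \ref{D:equiv_Schue_pair}. I would prove that the Krichever map carries each $(\bar h,\xi)$ to such a $T$ and, conversely, that every admissible $T$ arises in this way, so that the induced map on equivalence classes is a well-defined bijection; tracking the four identities spelled out after \eqref{E:comm_diagram2} is what makes this bookkeeping delicate, and it is here that one must verify nothing is lost or overcounted.

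For the rightmost bijection the analogous verification is lighter. Remark \ref{R:Q.2.9} already exhibits the passage $(\rho,\hat\varphi)\mapsto(\pi,\hat\phi)$ explicitly under $y=z^r$, so it remains to check that an isomorphism of Definition \ref{Q2.9} data (with its coordinate change on $K[[y]]$ and its trivialisation of $\hat\co_{C,p}^{\oplus r}$) corresponds to an isomorphism of the associated Definition \ref{D:proj_spec_data} data, and conversely that every such isomorphism lifts. The compatibility of the two notions of Schur pair under this correspondence is exactly the content of Proposition \ref{Q5.4}, whose proof (reproduced in the excerpt) identifies $K[[z]]\cdot z$ with $K[[y]]^{\oplus r}$; threading this identification through the definitions of isomorphism on both sides then closes the chain of bijections and yields the theorem.
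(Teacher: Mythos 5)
Your plan coincides with the route the paper itself indicates for this cited result (\cite[Th.~10.26]{Zheglov_book}): you factor the correspondence through Theorem~\ref{T:classif_Schur_pairs} together with the Krichever map and the inverse $\Projj\tilde A$, $\Projj\tilde W$ construction, pass between the two flavours of data via Remark~\ref{R:Q.2.9} and Proposition~\ref{Q5.4}, and correctly isolate the matching of equivalence relations (pairs $(\bar h,\xi)$ with $\bar h=z+a_2z^2+\cdots$ versus admissible operators $T$) as the delicate step, exactly the bookkeeping the paper defers to \cite[Ch.~9, 10]{Zheglov_book}. So the proposal is correct and takes essentially the same approach, with no gaps beyond the details that both you and the paper leave to the cited reference.
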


\begin{rem}
\label{R:proof_explanations}
The self contained proof of the first row of correspondences is given in \cite[Ch. 9, 10]{Zheglov_book}, this proof is an elaborated version of Mulase's proof from \cite{Mu} in a spirit of works \cite{Parshin2001}, \cite{Os}  and their  higher dimensional generalisations in \cite{Zheglov2013}, \cite{KOZ2014} (and the correspondence between data is described in sections \ref{S:geom_data}, \ref{S:embedded}).  

It follows directly from the definition of data isomorphism \ref{D:proj_spec_data_isom} that, taking any change of trivialisation $\pi$ by a composition $\bar{h}\circ \pi$, where $\bar{h}$ is an automorphism from definition \ref{D:proj_spec_data_isom}, we can find an appropriate trivialisation $\hat{\phi}'$ such that the data $(C,p,\cf ,\pi , \hat{\phi} )$ and $(C,p,\cf ,\bar{h}\circ \pi , \hat{\phi}' )$ are isomorphic. 

Just the same proof shows that, if we relax definition of data isomorphism by allowing arbitrary continuous automorphisms $\bar{h}: K[[z]]\rightarrow K[[z]]$, then we get a one-to-one correspondence with the equivalence classes of rings with respect to the weakened equivalence relation: 
\end{rem}

\begin{theorem}
\label{T:classif3}
There are  one-to-one correspondences
$$
[B\subset {D_1}\mbox{\quad of rank $r$}]/\thicksim  \longleftrightarrow  [(C,p,\cf , \pi , \hat{\phi} )\mbox{\quad of rank $r$}]/\cong \longleftrightarrow  [(C,p,\cf , \rho , \hat{\varphi} )\mbox{\quad of rank $r$}]/\cong ,
$$
where 
\begin{itemize}
\item
$\thicksim$ means "up to a scale automorphism $x\mapsto c^{-1}x$, $\partial\mapsto c\partial$".
\item $\cong$ means the modified isomorphisms of data. 
\end{itemize}
\end{theorem}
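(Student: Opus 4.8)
The plan is to factor the asserted correspondence through the classification of rank-$r$ commutative subrings of $D_1$ (Theorems~\ref{T:classif2}, \ref{T:classif3}) and the rank-$r$ normal-form machinery of~\cite{GZ24}, so that a genuine subring $B\subset D_1$ of rank $r$ with $B\simeq\co_C(C_0)$ is the bridge between a point $[x]\in X_{[\co_C(C_0)]}$ and a sheaf $\cf$. I first settle the existence clause. On the projective irreducible curve $C$ with its regular $K$-point $p$ there exists a rank-$r$ torsion-free sheaf $\cf$ with $H^0(C,\cf)=H^1(C,\cf)=0$ (the generic torsion-free sheaf of Euler characteristic zero has this property); fix trivializations $\rho,\hat{\varphi}$ as in Definition~\ref{Q2.9}. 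The Krichever map attaches to $(C,p,\rho,\cf,\hat{\varphi})$ a Schur pair, which by Remark~\ref{R:Q.2.9} and Proposition~\ref{Q5.4} becomes an embedded Schur pair $(A,W)$ of rank $r$; here $A=\chi_0(\co_C(C_0))$ has rank $r$, so by Lemma~\ref{Q5.2} it lies in $K((z^r))$. Theorem~\ref{T:classif_Schur_pairs} reconstructs from $(A,W)$ a commutative subring $B\subset D_1$ of rank $r$, generated by operators $P_i$ attached to the $w_i$ with $\Ord(P_i)=r\ord(w_i)$ (a pole of $y$-order $\ord(w_i)$ has $z$-order $r\ord(w_i)$, hence differential order $r\ord(w_i)$); the monicity of the Krichever images forces $\sigma(P_i)=\partial^{r\ord(w_i)}$. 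Conjugating $B$ by the Schur operator $S$ of $Q=P_1$, so that $SP_1S^{-1}=\partial^{rq}$, and then partially normalising $P_2'=SP_2S^{-1}$ via Lemma~\ref{L:normalisation} yields a partially normalised normal form $B'=\langle\partial^{rq},P_2',\ldots,P_m'\rangle$ of rank $r$ with $B'\simeq\co_C(C_0)$. This proves existence, and shows $X_{[\co_C(C_0)]}\neq\varnothing$.

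\textbf{The two maps.} Next I would define mutually inverse assignments. Given a closed $K$-point $x\in X_{[\co_C(C_0)]}$, its coordinates are the coefficients of operators $P_i'(x)\in C(\partial^q)$ (read through the matrix form $\psi\circ\hat{\Phi}$); the defining equations of $X_{[\co_C(C_0)]}$ — those coming from the prime ideal $I$ together with the commutation relations among $P_2',\ldots,P_m'$ — say precisely that the $P_i'(x)$ pairwise commute and satisfy the same relations as $w_1,\ldots,w_m$, so that $B'_x:=\langle P_1'(x),\ldots,P_m'(x)\rangle$ is a commutative ring abstractly isomorphic to $\co_C(C_0)$. Conjugating $B'_x$ back by a Schur operator into $D_1$ (equivalently, passing to its embedded Schur pair and applying the inverse construction $C=\Projj\tilde A$, $\cf=\Projj\tilde W$ of Section~\ref{S:classif_thms}) produces a rank-$r$ subring $B_x\subset D_1$ and, by Theorem~\ref{T:classif2}, a spectral datum; since $B'_x\simeq\co_C(C_0)$ the recovered curve and marked point are exactly $(C,p)$, so the datum delivers a rank-$r$ torsion-free sheaf $\cf_x$ on $C$ with vanishing cohomology. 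This defines $\Theta\colon x\mapsto[\cf_x]$. In the reverse direction, the construction of the existence clause, applied to a sheaf $\cf$ with a choice of trivializations, outputs a normal form $B'$ and hence coordinates $x(\cf)\in X_{[\co_C(C_0)]}$, defining $\Xi\colon[\cf]\mapsto[x(\cf)]$.

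\textbf{Compatibility, bijectivity, and the main obstacle.} It remains to check that $\Theta$ and $\Xi$ are well defined on equivalence classes and are mutually inverse. That $\Theta$ factors through $[x]$ is the easy direction: conjugation by $\mathrm{diag}(T,\ldots,T)$, $T\in GL_r(K)$, is an invertible order-$<r$ operator of $C(\partial^r)\subset C(\partial^{rq})$, and partial normalisation is conjugation by an invertible monic zeroth-order operator of $C(\partial^q)$; each replaces $B_x$ by an equivalent rank-$r$ subring in the sense of Theorem~\ref{T:classif2}, leaving $[\cf_x]$ fixed. The essential, and hardest, point is the well-definedness of $\Xi$ together with the exact matching of equivalences. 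Here one first uses that fixing the ordered generators $w_1,\ldots,w_m$ and the embedding $\pi$ rigidifies $(C,p)$: the subalgebra $A=\chi_0(\co_C(C_0))$ and its generators are pinned down, so no automorphism of $(C,p)$ contributes and one obtains sheaf \emph{isomorphism} classes rather than the coarser data-isomorphism classes of Definition~\ref{D:proj_spec_data_isom}. The remaining freedom in passing from $\cf$ to its coordinates is then only the choice of the module trivialisation $\hat{\varphi}$ (equivalently $\hat{\phi}$) and of the Schur operator $S$, and the crux is to show, via the explicit Krichever--Schur dictionary, that a change of $\hat{\varphi}$ fixing $[\cf]$ translates into conjugation by a \emph{constant} block-diagonal $GL_r(K)$ operator (a single $T$ for all generators), while the centraliser ambiguity of $S$ translates into partial normalisation from Lemma~\ref{L:normalisation} — with no residual freedom. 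This amounts to computing the stabiliser of the fixed pair $(A,\{w_i\})$ under admissible equivalence (Definition~\ref{D:equiv_Schue_pair}) modulo automorphisms preserving $[\cf]$, and it is precisely here that the coprimality of $\ord(w_1)$ with the $\ord(w_i)$, $i>1$, is decisive: it rigidifies $P_1'=\partial^{rq}$ and forces the admissible stabiliser down to block-scalar automorphisms. Granting this matching, $\Theta$ and $\Xi$ are mutually inverse by the bijectivity of the Krichever and Schur--Sato correspondences (Theorems~\ref{T:classif_Schur_pairs}, \ref{T:classif2}).
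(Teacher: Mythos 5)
Your proposal does not prove the statement at hand. The statement is Theorem \ref{T:classif3}, the classification of commutative elliptic subrings of rank $r$ \emph{up to scale automorphism} $x\mapsto c^{-1}x$, $\partial\mapsto c\partial$, in terms of spectral data up to the \emph{modified} isomorphisms (arbitrary continuous automorphisms $\bar{h}$ of $K[[z]]$ instead of those with $\bar{h}=z+a_2z^2+\ldots$). What you have written is instead a proof sketch of the paper's main Theorem \ref{T:parametrisation} — the parametrisation of spectral sheaves by equivalence classes of points of $X_{[\co_C(C_0)]}$ via partially normalised normal forms. Worse, your argument is circular with respect to the actual statement: in your opening sentence you invoke ``Theorems~\ref{T:classif2}, \ref{T:classif3}'' as ingredients, i.e.\ you assume the very theorem you were asked to prove. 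Nothing in your text addresses the specific content of Theorem \ref{T:classif3}: neither the scale-automorphism equivalence $\thicksim$ on subrings nor the relaxation of the automorphism $\bar{h}$ in Definition \ref{D:proj_spec_data_isom} appears anywhere in your argument.

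For comparison, the paper's own proof of Theorem \ref{T:classif3} is short and is contained in Remark \ref{R:proof_explanations}: one reruns the proof of Theorem \ref{T:classif2} (given in \cite[Ch.~9, 10]{Zheglov_book}) after relaxing the definition of data isomorphism to allow arbitrary continuous ring automorphisms $\bar{h}\colon K[[z]]\rightarrow K[[z]]$. The point is that a change of trivialisation $\pi\mapsto\bar{h}\circ\pi$ can always be compensated by an appropriate change of $\hat{\phi}$ keeping the data isomorphic, and enlarging the allowed $\bar{h}$ from $z+a_2z^2+\ldots$ to $c_1z+c_2z^2+\ldots$ with arbitrary $c_1\in K^*$ corresponds on the ring side exactly to enlarging equality of subrings (up to conjugation by $f\in K[[x]]^*$) by the scale automorphisms $x\mapsto c^{-1}x$, $\partial\mapsto c\partial$. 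If you wanted to write a blind proof of Theorem \ref{T:classif3}, this compensation mechanism between $\bar{h}$, $\hat{\phi}$, and the scale action is the one idea that must be supplied; your text supplies none of it. (As a side remark: even read as an attempt at Theorem \ref{T:parametrisation}, your third paragraph asserts rather than proves the crux — that a change of $\hat{\varphi}$ translates into block-diagonal conjugation — which the paper establishes via the decomposition $M=M_0M_1$ of the trivialisation matrix and the identity $\hat{\psi}(w\cdot M)=\hat{\psi}(w)\cdot\tilde{S}^{f}$; but that is a separate statement from the one you were assigned.)
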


\section{Explicit parametrisation of the moduli space of rank $r$ spectral sheaves}
\label{S:moduli}

In this section we prove theorem \ref{T:parametrisation} thus 
giving an explicit description of the moduli space of spectral sheaves of arbitrary rank on a spectral curve. Suppose $\tilde{K}$ is an algebraic closure of $K$. 

Assume $P,Q\in D_1$ are differential operators of positive order, where $Q$ is normalized and $P$ is monic and $[P,Q]=0$. We'll call a normal form $P'$ of the operator $P$ with respect to $Q$ as {\it partially normalized}, if it satisfies conditions of the following lemma.

\begin{lemma}
\label{L:normalisation}
Let $Q,P$ be differential operators as above. Assume $\ord (Q)=\Ord (Q)=q=dm$, $\ord (P)=\Ord (P)=p=dn$, 
$GCD(n,m)=1$. 
For $i=1, \ldots , q-1$, $\alpha = 0, \ldots , d-1$  define the sets $N_{i,\alpha}:=\{j\in \dn |\quad \mbox{$\alpha +d$($jn$ mod $m$)$\ge \max\{i,d\}$}\}$. 
  
Then there is a partially normalised normal form $P'$ of $P$ with respect to $Q$ which is described via 
its image under the embedding $\hat{\Phi}$ as follows: 
$$
\hat{\Phi}(P')=\tilde{D}^p +\sum_{l=-q+1}^{p-1} p_l \tilde{D}^l, \quad p_l=(p_{l,0},\ldots ,p_{l,q-1})\in \tilde{K}^{\oplus q},
$$
where $p_{l,j}=0$ for indices $j$ satisfying the following rules (all indices are taken mod $q$): for $\alpha = 0, \ldots , d-1$  
$$
\begin{cases}
\mbox{If $p\ge q$ then} 
\begin{cases}
p_{l,\alpha +(j-1)p}=\mbox{$0$ for $j\in N_{p-l, \alpha}$}\quad l>p-q \\
p_{l,j}=\mbox{$0$ for $j=0,\ldots ,-l-1$} \quad l<0
\end{cases}
\\
\mbox{If $p< q$ then}
\begin{cases}
p_{l,\alpha +(j-1)p}=\mbox{$0$ for $j\in N_{p-l, \alpha}$}\quad l\ge 0 \\
p_{l,j}=\mbox{$0$ for  $j=0,\ldots ,-l-1$ or if $j=\alpha +(k-1)p$ and $k\in N_{p-l, \alpha}$}\quad p-q<l<0 \\
p_{l,j}=\mbox{$0$ for $j=0,\ldots ,-l-1$} \quad l\le p-q
\end{cases}
\end{cases}
$$
Let's call coefficients $p_{l,j}$ supplementary to the list above as {\it coordinates} of  $P'$. If $P'$ and $P''$ are two operators with different values of coordinates, then there are no invertible monic  operators of order zero $S\in C(\partial^q)\subset \hat{D}_1^{sym}$, $S=1+\sum_{l=1}^{q-1}s_l\int^l$, with additional condition $\Phi (s_{l})_{\alpha}=0$ for $\alpha = 0, \ldots , d-1$ and any $l>0$, such that $P'=S^{-1}P''S$, and there are no invertible $1\neq S\in C(\partial^q)$ with this additional condition such that $P'=S^{-1}P'S$.
\end{lemma}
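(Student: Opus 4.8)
The plan is to translate the entire problem into the skew Laurent ring $E_q$ via the embedding $\hat{\Phi}$, where both $P'$ and the admissible conjugating operators become Laurent polynomials in $\tilde{D}$ with coefficients in $\tilde{K}^{\oplus q}$, and thereby to reduce the normalisation to the study of one explicit linear operator on $\tilde{K}^{\oplus q}$. Since $[P,Q]=0$ forces any normal form $P'=SPS^{-1}$ into the centralizer $C(\partial^q)$, the description of $\hat{\Phi}(C(\partial^q))$ recalled in the list \ref{S:list} already gives $\hat{\Phi}(P')=\tilde{D}^p+\sum_{l=-q+1}^{p-1}p_l\tilde{D}^l$ together with the centralizer shape conditions $p_{l,j}=0$ for $j=0,\ldots,-l-1$ when $l<0$. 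The only remaining freedom is conjugation by invertible monic order-zero operators $S\in C(\partial^q)$, and after applying $\hat{\Phi}$ such an $S$ becomes $1+\sum_{l\ge 1}\hat{s}_l\tilde{D}^{-l}$ with $\hat{s}_l=\Phi(s_l)$. First I would record the single computation that controls everything: from the commutation rule one has $\tilde{D}^p a=\sigma^{-p}(a)\tilde{D}^p$, hence
$$
[\tilde{D}^p,\,a\tilde{D}^{-l}]=(\sigma^{-p}(a)-a)\,\tilde{D}^{p-l},
$$
so conjugation by $1+\hat{s}_i\tilde{D}^{-i}$ changes the coefficient of $\hat{\Phi}(P')$ at $\tilde{D}^{p-i}$, to leading order, by $\Delta(\hat{s}_i)$, where $\Delta:=\sigma^{-p}-1$.

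Next I would analyse $\Delta$ on $\tilde{K}^{\oplus q}$. Because $p\equiv dn\pmod{q=dm}$ preserves residues modulo $d$ and, as $\gcd(n,m)=1$, acts transitively within each such residue class, the orbits of the shift $\sigma^{p}$ are exactly the $d$ congruence classes modulo $d$, each of size $m$. Consequently $\ker\Delta$ is the $d$-dimensional space of vectors constant on each orbit, while $\operatorname{im}\Delta$ is the $(q-d)$-dimensional space of vectors whose sum over each orbit vanishes. The two indexing conventions of the lemma now acquire their meaning: the parameter $\alpha=0,\ldots,d-1$ labels the orbits, and the additional constraint $\Phi(s_l)_\alpha=0$ for $\alpha=0,\ldots,d-1$ sets the representative (smallest-index) component of each orbit to zero, so that $\ker\Delta$ meets the admissible $s$-space trivially. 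Hence $\Delta$ restricts to a bijection from the admissible $s$-space onto $\operatorname{im}\Delta$.

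For existence I would build $S$ by downward induction on the order: assuming the coefficients at $\tilde{D}^{p-1},\ldots,\tilde{D}^{p-i+1}$ already agree with the prescribed pattern, I solve $\Delta(\hat{s}_i)=-(\text{the }\operatorname{im}\Delta\text{-part of the current coefficient at }\tilde{D}^{p-i})$; by the previous paragraph this has a unique admissible solution, and conjugating by $1+\hat{s}_i\tilde{D}^{-i}$ annihilates exactly the sum-zero part of that coefficient on every orbit while altering only strictly lower-order terms. The surviving components are the orbit-sums, i.e. the \emph{coordinates}, and it then remains to verify that the components forced to vanish are precisely those indexed by $N_{p-l,\alpha}$ and that this is compatible with the shape condition $s_{l,j}=0$ for $j<l$. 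Matching the staircase $\alpha+d(jn\bmod m)\ge i$ to the orbit positions $\alpha+(j-1)p\pmod q$ is a direct index computation, and it is here that the split into the cases $p\ge q$ and $p<q$, with their sub-ranges of $l$, arises, since the shape condition meshes with the orbit structure differently according to the sign and size of $l$ relative to $p-q$.

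For the rigidity statement I would compare coefficients in the identity $SP'=P''S$ (with $P''=P'$ in the stabiliser case). At order $\tilde{D}^{p-i}$ the leading terms give $\Delta(\hat{s}_i)=(p'_{p-i}-p''_{p-i})+(\text{terms built from the already-known }\hat{s}_{<i})$. Projecting onto $\operatorname{coker}\Delta$, i.e. onto the orbit-sums, annihilates the left-hand side and the shape-controlled lower terms, leaving an identity between the coordinates of $P'$ and $P''$ at level $p-i$; if the coordinates differ, the first level at which they do yields a contradiction, which proves the first assertion. Taking $P''=P'$, these cokernel equations hold automatically, the image equations then force $\hat{s}_i=0$ inductively since $\Delta$ is injective on the admissible space and the right-hand side is assembled from the vanishing $\hat{s}_{<i}$, and therefore $S=1$. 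The main obstacle I anticipate is not conceptual but combinatorial: verifying that the vanishing locus achievable by this inductive gauge fixing coincides exactly with the explicitly listed set, uniformly across all the cases, which requires careful tracking of the index reductions modulo $q$.
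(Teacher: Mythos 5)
Your setup coincides with the paper's: you pass to $E_q$ via $\hat{\Phi}$, note that the conditions for $l<0$ are automatic because $P'\in C(\partial^q)$, compute that conjugation by $1+\hat{s}_i\tilde{D}^{-i}$ changes the coefficient at $\tilde{D}^{p-i}$ exactly by $\Delta(\hat{s}_i)$ with $\Delta=\sigma^{-p}-1$, and identify the $\sigma^{p}$-orbits as the $d$ residue classes modulo $d$, each of length $m$; the paper does all of this, level by level with $S_i=1+s_i\int^i$. The gap is in the linear-algebra claim on which your whole induction rests. The conjugating operators must lie in $C(\partial^q)\subset\hat{D}_1^{sym}$, and by the description of $\hat{\Phi}(C(\partial^q))$ (see the list of notations and \cite[Lem.~2.4]{GZ24}) their coefficients are forced to satisfy the shape condition $\hat{s}_{i,j}=0$ for $j=0,\ldots,i-1$. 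Your ``admissible $s$-space'' is cut out only by the extra condition $\hat{s}_{i,\alpha}=0$, $\alpha=0,\ldots,d-1$; on that $(q-d)$-dimensional space $\Delta$ is indeed a bijection onto $\operatorname{im}\Delta$. But once the shape condition is imposed, the space of genuinely allowed $\hat{s}_i$ has dimension $q-\max(i,d)$, which for every level $i=p-l>d$ is strictly smaller than $\dim\operatorname{im}\Delta=q-d$. Hence the equation $\Delta(\hat{s}_i)=-(\text{image part of the current coefficient})$ that you propose to solve at each step has, in general, no solution by operators of $C(\partial^q)$; and if you solve it with an unrestricted $\hat{s}_i\in E_q$, your $S$ is no longer the image of an operator of $\hat{D}_1^{sym}$, so the output is not a normal form of $P$ with respect to $Q$ in the required sense.

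This is not the ``merely combinatorial'' compatibility check you defer to the end; it is the substance of the lemma. Because the shape condition freezes the components $\hat{s}_{i,j}$ with $j<i$, only those positions along an orbit reachable by the greedy walk can be annihilated: one kills $\tilde{p}_{l,\alpha+(j-1)p}$ using $\hat{s}_{i,\alpha+jp}$, which is permitted exactly when $\alpha+d(jn\bmod m)\ge i$, i.e. $j\in N_{i,\alpha}$ --- this is precisely how the paper solves its cyclic linear systems, and it is the origin of the sets $N_{i,\alpha}$ and of the case split $p\ge q$ versus $p<q$. Consequently the number of surviving coordinates per orbit grows as $p-l$ grows (for example, at $i=p-l=q-1$ at most one component per orbit can be killed, so at least $m-1$ survive), whereas your pattern ``coordinates $=$ orbit sums'' would leave exactly $d$ coordinates at every level; this contradicts the very list you are trying to establish. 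The rigidity part inherits the same defect: instead of projecting onto $\operatorname{coker}\Delta$, the paper's argument walks along each orbit starting from the representative (zero by the extra condition) and uses the prescribed zeros of $P'$ and $P''$ together with the shape-forced zeros of $\hat{s}_i$ to conclude inductively that every component of $\hat{s}_i$ vanishes, whence the coordinates coincide and the stabilizer is trivial.
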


\begin{rem}
\label{R:normalisation}
This lemma is analogous to \cite[Lem. 3.4]{GZ24}, but the conclusion is weaker. Namely, the form $P'$ may have a non-trivial stabilizer from $C(\partial^d)$, and for two operators $P',P''$ with different values of coordinates can exist an invertible operator $S\in C(\partial^q)$  such that $P'=S^{-1}P''S$. The examples of such operators will appear in section \ref{S:example}. 

A partially normalized normal form $P'$ from lemma is obtained from $P$ by conjugation to the Schur operator $\hat{S}\in \hat{D}_1^{sym}$ for the operator $Q$, i.e., by definition, a monic operator of order zero such that $\hat{S}Q\hat{S}^{-1}=\partial^q$, $\hat{S}P\hat{S}^{-1}=P'$. From the proof below we'll see that such operator is uniquely defined (in particular, the partially normalized normal form is uniquely defined) if we pose an extra condition on its homogeneous components $\hat{S}_{-l}$, $l=1,\ldots ,d-1$. Recall that a Schur operator satisfies condition $A_q(0)$ (\cite[Prop. 2.5, Cor. 2.3]{GZ24}) and it  is defined up to multiplication by a monic invertible zeroth order operator $S\in C(\partial^q)$. Then we can choose any condition that uniquely distinguishes this operator in the coset defined by multiplication by  operators $S$ with {\it non-zero} coefficients  $\Phi (s_{l})_{\alpha}=0$ for some $l$ and some $\alpha \in \{0, \ldots , d-1\}$. It is possible due to a special G-form of homogeneous components of operators satisfying condition $A_q(0)$ (see the list of notations).
\end{rem}

\begin{proof}
The proof is similar to the proof of \cite[Lem. 3.4]{GZ24}. For convenience of the reader we give it here.

To show that such normalised form exist, we can follow the arguments in \cite[Lem. 3.3]{GZ24}. If $P'$ is a given normal form of $P$ with respect to $Q$, obtained by conjugation by a Schur operator $\hat{S}$ satisfying an additional condition from remark \ref{R:normalisation}, we will find a monic operator of order zero $S\in C(\partial^q)$ with additional condition $\Phi (s_{l})_{\alpha}=0$ for $\alpha = 0, \ldots , d-1$ and any $l>0$ (thus it will be automatically invertible) step by step, as a product of operators $S_l=1+s_l \int^{l}\in C(\partial^q)$, $l=1,\ldots ,q-1$. Since $\hat{\Phi}$ is an embedding of rings, we can provide all calculations in the ring $E_q$. Note that by definition of $\hat{\Phi}$ and from \cite[Lem. 2.4]{GZ24} we get $\hat{\Phi}(S_l)=1+\tilde{s}_l \tilde{D}^{-l}$, where $\tilde{s}_{l,j}=0$ for $j=0, \ldots , l-1$. Using calculations from \cite[Lem. 3.3]{GZ24}, we get for 
$\tilde{P}'':=\hat{\Phi}(S_l^{-1}P'S_l)= \tilde{D}^p+\ldots + \tilde{p}_l\tilde{D}^{p-l}+\ldots$ 
the following $d$  systems of linear equations: for $\alpha =0, \ldots ,d-1$ 
$$
\begin{cases}
p_{l,\alpha }-\tilde{s}_{l,\alpha}+\tilde{s}_{l,\alpha +p}=\tilde{p}_{l,\alpha} \\
p_{l,\alpha +p}-\tilde{s}_{l,\alpha +p}+\tilde{s}_{l,\alpha +2p}=\tilde{p}_{l,\alpha +p} \\
\ldots \\
p_{l,\alpha +(m-1)p}-\tilde{s}_{l,\alpha +(m-1)p}+\tilde{s}_{l,\alpha}=\tilde{p}_{l,\alpha+(m-1)p}
\end{cases}
$$ 
because $GCD(p,q)=d$ (here again all indices are taken modulo $q$). Now we can put $\tilde{s}_{l,\alpha}=0$ for all $l=1,\ldots ,q-1$, $\alpha =0, \ldots ,d-1$, and, starting with the first equation, we can see that for $j$-th equation in each system of linear equations, where $j\in N_{l,\alpha }$, we can find $\tilde{s}_{l,\alpha + jp}$ such that $\tilde{p}_{l,\alpha + (j-1)p}=0$. On the other hand, $\tilde{s}_{l,\alpha + jp}=0$ for all $j\notin N_{l,\alpha }$. Thus, we uniquely determine $\tilde{s}_l$ such that conditions of lemma satisfied for $(p-l)$-th homogeneous component of $\tilde{P}''$ and $\tilde{s}_{l,\alpha}=0$ for all $\alpha =0, \ldots ,d-1$. 

On the other hand, since $\tilde{P}''\in C(\partial^q)$, we know from lemma \cite[Lem. 2.4]{GZ24} that  $\tilde{p}_{l,j}=0$ for $j=0,\ldots , -l-1$ if $l<0$ (in particular, the number of zero coefficients for $p-q<l<0$ in case $p<q$ is constant). Taking $S=\prod_{j=1}^{q-1}S_j$ we get the needed operator: the normal form $S^{-1}P'S$ will satisfy all conditions as stated. Without loss of generality we can assume that the operator $S\hat{S}$ satisfies an additional condition from remark \ref{R:normalisation}.

To prove the second statement (the uniqueness of such partially normalised normal form up to conjugation with a monic invertible operator of order zero with additional conditions), just note that the same equations as above show that all other homogeneous components of $S$ must be zero (as all coefficients of  $P'$ and $P''$ from the list in the formulation are zero) and therefore the equality $P'=S^{-1}P''S$ is impossible. 

Finally, note that if $\hat{S}'$ is another operator satisfying an additional condition from remark \ref{R:normalisation} such that $P''=(\hat{S}')^{-1}P\hat{S}'$ is a partially normalized normal form, then
$S':=S\hat{S}(\hat{S}')^{-1}\in C(\partial^q)$ must be a monic invertible operator  with {\it non-zero} coefficients  $\Phi (s_{l}')_{\alpha}=0$ for some $l$ and some $\alpha \in \{0, \ldots , d-1\}$ - a contradiction with the additional condition from remark \ref{R:normalisation}. Thus, the partially normalized Schur operator $\hat{S}$ and the corresponding partially normalized normal form $P'$ are uniquely determined. 

\end{proof}

Before we prove of our main theorem recall some necessary facts about the Burchnall-Chaundy polynomial of a pair of commuting differential operators. 

Recall the famous Burchnall-Chaundy lemma (\cite{BC1}) says  that any two commuting differential operators $P,Q\in D_1:=K[[x]][\partial ]$ are algebraically dependent. More precisely, if the orders $n, m$ of operators $P,Q$ are coprime (i.e. the rank of the ring $K[P,Q]$ is 1), then there exists an irreducible polynomial $f(X,Y)$ of weighted degree $v_{n,m}(f)=mn$ of {\it special form} (here the weighted degree is defined as in Dixmier's paper \cite{Dixmier}): $f(X,Y) = \alpha X^m\pm Y^n+\ldots $ (here $\ldots$ mean terms of lower weighted degree, $0\neq\alpha\in K$; in particular, for coprime $m$ and $n$ the polynomial $f$ is automatically irreducible), such that $f(P,Q)=0$. A similar result for commuting operators of rank $r$ was established in \cite{Wilson}, in this case $m=\ord (Q)/r$, $n=\ord (P)/r$, and again $GCD(m,n)=1$. More precisely, by theorem in {\it loc. cit.}, (cf. also \cite{Previato2019}) the polynomial given by the differential resultant $f(X,Y)=\partial Res(P-X,Q-Y)$ gives an algebraic relation between $P$ and $Q$, and $f=h^r$, where $h$ is irreducible and $r=\rk K[P,Q]$. In particular, if $r=1$, then $f$ is irreducible; besides, as it follows from definition of differential resultant, $f(X,Y) = (\alpha X^{\ord (Q)/d}\pm Y^{\ord (P)/d})^d+\ldots $, where $d=GCD (\ord (Q), \ord (P))$, even if the orders of $P,Q$ are not coprime. 

Recall one lemma from \cite{GZ24}:
\begin{lemma}{(\cite[Lem. 3.3]{GZ24})}
\label{L:conjugation in E_q}
Assume $\tilde{P}\in \tilde{K}^{\oplus k}((\tilde{D}^{-1}))\subset E_k$ is a monic operator with $\ord_{\tilde{D}}(\tilde{P})=p$. Let $p=dn$, $k=q=dm$, where $d=GCD (p,q)$. 

Then there exists a monic invertible operator $\tilde{S}\in \tilde{K}^{\oplus k}((\tilde{D}^{-1}))\subset E_k$ with $\ord_{\tilde{D}}(\tilde{S})=0$ such that all coefficients of the operator $\tilde{S}^{-1}\tilde{P}\tilde{S}$ commute with $\tilde{D}^d$. Moreover, there exists a unique such operator with additional condition on its coefficients $\tilde{s}_i$: for all $i$ and for $\alpha =0, \ldots , d-1$ $\tilde{s}_{i,\alpha}=0$. 
\end{lemma}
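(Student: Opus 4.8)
The plan is to carry out the construction entirely inside the subring $\tilde{K}^{\oplus k}((\tilde{D}^{-1}))\subset E_k$, which is closed under the skew multiplication because $\sigma$ preserves $\tilde{K}^{\oplus k}$, and to build the conjugating operator as an infinite product $\tilde{S}=\prod_{l\ge 1}\tilde{S}_l$ of elementary factors $\tilde{S}_l=1+s_l\tilde{D}^{-l}$, $s_l\in\tilde{K}^{\oplus k}$, removing, one homogeneous component at a time, the part of the conjugate that fails to commute with $\tilde{D}^{d}$. Since every $\tilde{S}_l-1$ has order $-l$, the product converges in the $\tilde{D}^{-1}$-adic topology and $\tilde{S}$ is monic of order $0$, hence automatically invertible; so the real content is solving the step-by-step linear equations and checking that the stated normalisation of the $s_l$ makes the solution unique.

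Everything reduces to the linear algebra of the single $\tilde{K}$-linear map $L:=\sigma^{-p}-\mathrm{id}$ on $V:=\tilde{K}^{\oplus k}$. First I would record that an element $a\in V$ commutes with $\tilde{D}^d$ if and only if $\sigma^d(a)=a$, i.e. $a$ is $d$-periodic; these form a subspace $V_0$ of dimension $d$, parametrised by the coordinates $\alpha=0,\dots,d-1$. Because $p=dn$, $k=dm$ and $GCD(n,m)=1$, we have $\sigma^{-p}=(\sigma^d)^{-n}$, and $\sigma^{-p}$ and $\sigma^d$ generate the same cyclic subgroup of order $m$ in $\mathrm{GL}(V)$; hence their fixed spaces coincide, giving $\Ker L=V_0$. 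Since $\sigma^{-p}$ has finite order and $\chara K=0$ it is diagonalisable, so $V=V_0\oplus V_+$, where $V_+$ is the sum of the eigenspaces for eigenvalues $\neq 1$; then $V_+$ is exactly the image of $L$ and $L$ restricts to an isomorphism of $V_+$. This splitting, together with the identification $V_0=\Ker L$, is the entire engine of the proof.

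Now I would run the recursion. Conjugation by a single factor uses $\tilde{D}^{p}a=\sigma^{-p}(a)\tilde{D}^{p}$, which gives $[\tilde{D}^{p},s_l\tilde{D}^{-l}]=L(s_l)\,\tilde{D}^{p-l}$; consequently conjugating by $\tilde{S}_l$ alters only the homogeneous components of order $\le p-l$ and changes the coefficient of $\tilde{D}^{p-l}$ from its current value $b_l$ (which depends only on $\tilde{P}$ and on the already-chosen $s_1,\dots,s_{l-1}$) to $b_l+L(s_l)$. Writing $b_l=b_l^{0}+b_l^{+}$ along $V=V_0\oplus V_+$, the demand that the new coefficient be $d$-periodic is equivalent to $L(s_l)=-b_l^{+}$, which is solvable because $L|_{V_+}$ is bijective, the solution being unique up to $\Ker L=V_0$, and leaving the surviving periodic coefficient equal to $b_l^{0}$. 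Within the coset $s_l+V_0$ there is then a unique representative with $\tilde{s}_{l,\alpha}=0$ for $\alpha=0,\dots,d-1$, since the projection $V_0\to\tilde{K}^{\oplus d}$ onto these coordinates is an isomorphism; I would choose this representative at every step. Because each later factor $\tilde{S}_{l'}$, $l'>l$, touches only orders $<p-l$, the components made periodic earlier are never disturbed, so the limit operator $\tilde{S}^{-1}\tilde{P}\tilde{S}$ has all coefficients in $V_0$, as required. The same inductive scheme proves uniqueness: once $s_1,\dots,s_{l-1}$ are fixed, the periodicity condition determines $L(s_l)$ hence $s_l$ up to $V_0$, and the normalising condition removes this last freedom, so $\tilde{S}$ is unique.

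The step that requires genuine care, and which I regard as the main obstacle, is the bookkeeping in the recursion: one must verify that conjugation by $\tilde{S}_l$ contributes precisely the linear term $L(s_l)$ to the order-$(p-l)$ coefficient while leaving the higher orders untouched, so that the procedure is genuinely triangular and the factors may be applied one at a time without interference. The only other place where a hypothesis is essential is the identification $\Ker(\sigma^{-p}-\mathrm{id})=V_0$, where the coprimality $GCD(n,m)=1$ is used to make $\langle\sigma^{-p}\rangle=\langle\sigma^{d}\rangle$; this is exactly what guarantees that the $d$ normalising coordinates $\alpha=0,\dots,d-1$ cut a unique operator out of the stabiliser and so yield a well-defined Schur operator.
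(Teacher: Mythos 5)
Your proposal is correct and is essentially the paper's own argument (the one from \cite[Lem.~3.3]{GZ24}, reproduced in this paper's proof of Lemma \ref{L:normalisation}): recursive conjugation by elementary factors $1+s_l\tilde{D}^{-l}$, using that the order-$(p-l)$ coefficient shifts exactly by $(\sigma^{-p}-\mathrm{id})(s_l)$ while higher orders are untouched, with coprimality of $n$ and $m$ identifying the kernel with the $d$-periodic vectors and the vanishing of the coordinates $\alpha=0,\dots,d-1$ fixing the unique representative. Your invariant splitting $V=\Ker L\oplus \mathrm{im}\,L$ for $L=\sigma^{-p}-\mathrm{id}$ is just a coordinate-free repackaging of the paper's $d$ cyclic linear systems of length $m$ along the orbits of $j\mapsto j+p \bmod q$, so the two proofs coincide in substance.
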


The last assertion of this lemma is proved in \cite[Th. 5.2]{Zheglov25} for $d=1$, for generic $d$ the proof is almost the same. 

The following lemma is a generalisation of \cite[Lem. 3.5]{GZ24}.
\begin{lemma}
\label{L:solution_of_BC_equation}
Let $f(X,Y) =  X^q\pm Y^p+\ldots \in K[X,Y]$ be a Burchnall-Chaundy polynomial with coprime $p,q$. Assume  $P'\in C(\partial^{dq})\subset \hat{D}_1^{sym}\hat{\otimes}\tilde{K}$, $d\in \dn$ is a monic operator with $\Ord (P')=dp$ such that $f(P',\partial^{dq})=0$. 

Then $a:= S^{-1}\hat{\Phi}(P')S\in \tilde{K}((\tilde{D}^{-1}))$, where $S$ is an operator from lemma \ref{L:conjugation in E_q}, and $a$ is the uniquely defined monic element in $\tilde{K}((\tilde{D}^{-1}))$, satisfying the equation $f(a,\tilde{D}^q)=0$. 
\end{lemma}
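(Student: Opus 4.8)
---

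The plan is to reduce the problem for the operator $P'\in C(\partial^{dq})$ to the rank-one situation already handled in \cite[Lem. 3.5]{GZ24}. The key observation is that conjugation by the operator $S$ from lemma \ref{L:conjugation in E_q} carries $\hat{\Phi}(P')$ into $\tilde{K}((\tilde{D}^{-1}))$, i.e.\ into the ``diagonal'' or scalar part of $E_k$ where all coefficients commute with $\tilde{D}^d$. First I would set $a:=S^{-1}\hat{\Phi}(P')S$ and verify that $a$ indeed lands in $\tilde{K}((\tilde{D}^{-1}))$ rather than merely in $\tilde{K}^{\oplus dq}((\tilde{D}^{-1}))$; this is not automatic from lemma \ref{L:conjugation in E_q} alone, which only guarantees that the coefficients commute with $\tilde{D}^d$. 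The extra input is that $P'$ lies in the full centralizer $C(\partial^{dq})$, so its image commutes with $\tilde{D}^{dq}$, and combining the two constraints should force the coefficients into the scalar subfield $\tilde{K}$ embedded diagonally.

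Next I would check that $a$ satisfies the Burchnall--Chaundy equation. Since $\hat{\Phi}$ is a ring embedding and $\psi$, $\hat{\Phi}$ respect the algebra structure, the relation $f(P',\partial^{dq})=0$ transports to $f(\hat{\Phi}(P'),\tilde{D}^{dq})=0$ in $E_k$. Conjugation by $S$ is an algebra automorphism fixing $\tilde{D}^{dq}$ (because $S$ is monic of order zero and $\tilde{D}^{dq}=\tilde{D}^q$ raised appropriately is central in the relevant sense—more precisely $S^{-1}\tilde{D}^{dq}S=\tilde{D}^{dq}$ since $S$ commutes with powers of $\tilde{D}^d$), so applying $S^{-1}(\cdot)S$ yields $f(a,\tilde{D}^{dq})=0$. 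One must be careful here that $S$ genuinely commutes with $\tilde{D}^{dq}$; this follows from the construction in lemma \ref{L:conjugation in E_q}, where the conjugated operator's coefficients commute with $\tilde{D}^d$ and hence with $\tilde{D}^{dq}$. I would then rewrite the polynomial relation in terms of $\tilde{D}^q$: observing $f(X,Y)=X^q\pm Y^p+\cdots$ with $p,q$ coprime and that the orders scale by $d$, the equation $f(a,\tilde{D}^{dq})=0$ should be interpretable as $f(a,(\tilde{D}^q)^{d})=0$ matching the rank-one normalization in the target statement $f(a,\tilde{D}^q)=0$.

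For the uniqueness of $a$ as a monic solution, I would invoke the argument from \cite[Lem. 3.5]{GZ24}: in the field $\tilde{K}((\tilde{D}^{-1}))$, a monic solution of the Burchnall--Chaundy equation $f(a,\tilde{D}^q)=0$ of the prescribed order $\ord_{\tilde{D}}(a)=p$ is determined recursively coefficient by coefficient, since the leading term fixes $\ord_{\tilde{D}}(a)=p$ and each lower coefficient is then forced by comparing homogeneous components (the coprimality of $p,q$ guarantees no ambiguity in the recursion). Because $\tilde{K}((\tilde{D}^{-1}))$ is a field and $f$ has the special form with coprime exponents, the solution in this scalar Laurent series field is unique among monic elements of the correct order.

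The main obstacle I anticipate is the first step: proving that $a$ lands in the scalar field $\tilde{K}((\tilde{D}^{-1}))$ and not just in the subalgebra of $E_k$ whose coefficients commute with $\tilde{D}^d$. Lemma \ref{L:conjugation in E_q} gives commutation with $\tilde{D}^d$, which corresponds to coefficients in $\tilde{K}^{\oplus d}$-type blocks, not genuine scalars. The resolution should come from the hypothesis $P'\in C(\partial^{dq})$: this stronger centralizer condition means $\hat{\Phi}(P')$ commutes with $\tilde{D}^{dq}$, and after conjugation by $S$ the element $a$ commutes with both $\tilde{D}^d$ (from the lemma) and $\tilde{D}^{dq}$; since $\gcd$ considerations and the structure of the centralizer in $E_k$ pin the coefficients down, one concludes $a\in \tilde{K}((\tilde{D}^{-1}))$. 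Verifying this compatibility carefully—tracking how the two commutation conditions interact through the automorphism $\sigma$ governing the commutation relations in $E_k$—is the technical heart of the argument.
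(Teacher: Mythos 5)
Your transport of the equation (applying the ring embedding $\hat{\Phi}$ and conjugating by $S$ to get $f(a,\tilde{D}^{dq})=0$) and your uniqueness argument inside the scalar field $\tilde{K}((\tilde{D}^{-1}))$ are both fine and agree with the paper. But the step you yourself identify as the technical heart --- proving that $a$ lands in $\tilde{K}((\tilde{D}^{-1}))$ rather than just in the subring of series whose coefficients commute with $\tilde{D}^d$ --- is exactly where your proposal fails, and your proposed fix cannot be repaired. The extra hypothesis you want to exploit, namely that $\hat{\Phi}(P')$ (hence $a$) commutes with $\tilde{D}^{dq}$, is vacuous: in $\tilde{K}^{\oplus dq}((\tilde{D}^{-1}))$ the element $\tilde{D}^{dq}$ is \emph{central}, since the twist $\sigma$ is the cyclic shift of order exactly $dq$ (this centrality is also the trivial reason why $S$ commutes with $\tilde{D}^{dq}$, a fact you treat as needing care). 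Consequently, commutation with both $\tilde{D}^d$ and $\tilde{D}^{dq}$ is equivalent to commutation with $\tilde{D}^{\gcd(d,dq)}=\tilde{D}^d$ alone, and this only constrains the coefficients of $a$ to lie in the $\sigma^d$-invariant subalgebra of $\tilde{K}^{\oplus dq}$, which is isomorphic to $\tilde{K}^{\oplus d}$ and is strictly larger than the diagonal copy of $\tilde{K}$ whenever $d>1$. No ``$\gcd$ considerations'' on centralizers can close this gap.

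What actually forces the coefficients to be constants is the Burchnall--Chaundy equation itself, which your scalarity step never invokes. The paper's proof runs a Hensel/Newton-type recursion for $f(a,\tilde{D}^{dq})=0$ inside the ring of series with $\sigma^d$-invariant coefficients: writing $a=\tilde{D}^{dp}+\sum_{l\ge 1}a_l\tilde{D}^{dp-l}$, comparison of homogeneous components gives at each step an equation $q\,a_l=(\mbox{polynomial in }a_1,\dots,a_{l-1})$, because the twists $\sigma^{dpi}$ act trivially on $\sigma^d$-invariant coefficients; hence each $a_l$ is uniquely determined by the preceding ones, the recursion manifestly preserves constants, so by induction all $a_l$ are constants, and the same recursion yields the uniqueness of the monic solution. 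Here the coprimality of $p,q$ and the special leading form $X^q\pm Y^p$ (a simple, non-repeated leading root, so the linearization does not degenerate) are essential. Indeed, remark \ref{R:solution_of_BC_equation} of the paper notes that for a general BC polynomial $(\alpha X^{q}\pm Y^{p})^{d}+\ldots$ the conclusion of the lemma is false, even though every centralizer condition you use holds verbatim in that situation as well --- which shows that any argument ignoring the equation, such as the one you propose, cannot succeed.
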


\begin{proof}
From lemma \ref{L:conjugation in E_q} we know that all coefficients of $a$ commute with $\tilde{D}^d$. Since $p,q$ are coprime and $\Ord (a)=dp$, the standard arguments from Hensel's lemma show that all such coefficients must be constant (i.e. all their components in $\tilde{K}^{\oplus dq}$ are equal). By the same reason, such a monic series $a$ with constant coefficients is uniquely determined by the equation $f$. 
\end{proof}

\begin{rem}
\label{R:solution_of_BC_equation}
For generic BC-polynomial $f(X,Y) = (\alpha X^{\ord (Q)/d}\pm Y^{\ord (P)/d})^d+\ldots $ the conclusion of lemma may be not true. This explains the special condition on orders of operators in definition \ref{D:normalised_normal_form}, see also proof of theorem \ref{T:parametrisation} below.
\end{rem}

For the proof of our main theorem we'll need also the following lemma.
\begin{lemma}
\label{L:T=SS}
Let $\tilde{S}\in \tilde{K}^{\oplus k}((\tilde{D}^{-1}))\subset E_k$ be the operator from lemma \ref{L:conjugation in E_q} satisfying additional conditions on its coefficients $\tilde{s}_i$: 
$\tilde{s}_{i,0}=0$. 

Then there is a unique monic zeroth order operator $T$ with all its homogeneous components don't depending on $A_i$ and $B_j$ (i.e. $T\in \tilde{K}[[x]]((\partial^{-1}))\subset E_k$, cf. footnote 13 in \cite{Zheglov25}) such that $\tilde{S}T$ is a monic zeroth order operator satisfying condition $A_k(0)$\footnote{Here we adopt definition of condition $A_k(0)$ to the ring $E_k$ in the following sense: condition 2 from definition (totally freeness) is replaced by an equivalent system of equations \cite[(2.3)]{Zheglov25} on coefficients, all other conditions are the same, see footnote 13 in loc. cit.}. 
\end{lemma}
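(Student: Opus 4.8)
The claim asserts a factorisation $\tilde S = S' T^{-1}$, or more precisely that a suitable $T \in \tilde K[[x]]((\partial^{-1}))$ corrects $\tilde S$ so that the product $\tilde S T$ satisfies condition $A_k(0)$. The natural strategy is to work homogeneous-component by homogeneous-component in the grading by $\Ord$, exploiting that $T$ is required to lie in the "function-only" subring (no $A_i$, no $B_j$) while $\tilde S$ is constrained by Lemma~\ref{L:conjugation in E_q} to have vanishing zeroth components $\tilde s_{i,0}=0$.

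The plan is as follows. First I would translate everything into the ring $E_k$ via $\hat\Phi$, so that both $\tilde S$ and the target condition $A_k(0)$ are expressed through conditions on the $\tilde K^{\oplus k}$-valued coefficients of Laurent series in $\tilde D$; here I would use the reformulation of condition $A_k(0)$ in the sense of the footnote, namely the system of equations \cite[(2.3)]{Zheglov25} replacing total freeness. Since both $\tilde S$ and $T$ are monic of order zero, the product $\tilde S T$ is automatically monic of order zero, so only the lower homogeneous components need adjusting. I would then build $T$ inductively as a product $T = \prod_{l\ge 1} T_l$ with $T_l = 1 + t_l \partial^{-l}$, where $t_l \in \tilde K[[x]]$, determining $t_l$ so that the $l$-th homogeneous component of $\tilde S T_1\cdots T_l$ acquires the special G-form demanded by condition $A_k(0)$. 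At each step the correction is governed by a linear equation: the part of the coefficient of $\tilde S T_1\cdots T_{l-1}$ that violates the $A_k(0)$ G-form (i.e. the component along the "forbidden" directions) must be cancelled by the contribution of $t_l$, which enters linearly since $T_l = 1 + (\text{lower order})$.

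The crux is the \emph{solvability and uniqueness} of each such linear step, and this is where the hypothesis $\tilde s_{i,0}=0$ does the work. The key observation is that multiplication by an operator from $\tilde K[[x]]((\partial^{-1}))$ moves precisely along the "diagonal" directions in $\tilde K^{\oplus k}$ fixed by $\sigma$ (because such operators have $A_i$-independent, hence $\sigma$-symmetric, homogeneous components), whereas the defect of $\tilde S$ in satisfying $A_k(0)$ lives exactly in the complementary space because of the vanishing zeroth components. Thus at each homogeneous level the space of available corrections $t_l$ exactly matches the defect to be killed, giving a unique solution $t_l$. I expect this matching — showing that the directions reachable by a function-type $T_l$ coincide with the directions in which $\tilde S$ fails condition $A_k(0)$, so that the linear system is square and nonsingular — to be the main obstacle; it requires carefully tracking how the Sato-type homomorphism $1\circ$ and the $\sigma$-action interact with the G-form decomposition. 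Once this is established for each $l$, monicity guarantees the induction terminates correctly in the $(\partial^{-1})$-adic topology, and the uniqueness of each $t_l$ yields uniqueness of $T$, completing the argument.
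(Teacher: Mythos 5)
Your inductive skeleton --- correcting $\tilde{S}$ order by order by factors $T_l=1+t_l D^{-l}$ and solving a linear problem at each level --- is the right shape of argument (the paper itself gives no internal proof, delegating to item 4 of the proof of \cite[Th. 5.2]{Zheglov25}, which proceeds level by level in just this way). But the step you yourself single out as the main obstacle, the unique solvability of each linear step, is exactly where your proposal goes wrong rather than merely stopping short: you never unpack what condition $A_k(0)$ actually demands of $\tilde{S}T$ inside $E_k$. Unwinding condition 2 (``totally free of $B_j$'', the system (2.3)) does this concretely: for a homogeneous operator $f D^{-l}$ of order $-l$ one has $\int^{l}\partial^{l}=1-(B_1+\dots+B_l)$ and $fB_j=\lambda_f(j-1)B_j$, where $\lambda_f(m)$ is the eigenvalue of $f$ on $x^m$; hence $fD^{-l}$ is totally free of $B_j$ if and only if $\lambda_f(m)=0$ for $m=0,1,\dots,l-1$. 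In vector form this says that the order-$(-l)$ coefficient of $\tilde{S}T$, an element of $\tilde{K}^{\oplus k}[\Gamma_1]$, must have its $(m\bmod k)$-th entry vanish at $\Gamma_1=m$ for $m=0,\dots,l-1$ (for constant coefficients this specializes exactly to the paper's description of $\hat{\Phi}(C(\partial^k))$: $s_{l,j}=0$ for $j=0,\dots,l-1$). Since condition 3 of $A_k(0)$ caps $\deg_{\Gamma_1}$ of the $l$-th component by $l-1$, and since $(\tilde{S}T)_{-l}=t_l(\Gamma_1)+\tilde{s}_l+\sum_{i=1}^{l-1}\tilde{s}_i\, t_{l-i}(\Gamma_1+i)$, the $l$-th step reads: find a scalar polynomial $t_l$ of degree $\le l-1$ with prescribed values at the $l$ distinct points $0,1,\dots,l-1$. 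Existence and uniqueness is Lagrange interpolation, nothing more.

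Your proposed mechanism cannot be repaired into this. You claim that function-type corrections move along the diagonal ($\sigma$-fixed) direction of $\tilde{K}^{\oplus k}$, while the defect of $\tilde{S}$ lies in a complementary space because $\tilde{s}_{i,0}=0$, so that corrections and defects match. This is dimensionally incoherent: the diagonal of $\tilde{K}^{\oplus k}$ is one-dimensional, whereas the defect at level $l$ --- the entries $\tilde{s}_{l,j}$ with $1\le j\le \min(l,k)-1$ that (2.3) requires to be cancelled --- involves up to $k-1$ independent entries, so a constant diagonal correction could never kill it and the resulting system would generically be unsolvable. What makes the system square and nonsingular is the $\Gamma_1$-direction your picture ignores: $t_l$ is indeed diagonal as a vector, but it is a \emph{polynomial} of degree $\le l-1$, and each of the $l$ equations evaluates it at a \emph{different} integer point; the freedom lies in the $l$ interpolation values, not in any splitting of $\tilde{K}^{\oplus k}$. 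For the same reason the hypothesis $\tilde{s}_{i,0}=0$ does not ``do the work'': existence and uniqueness of $T$ hold for an arbitrary monic $\tilde{S}$ with constant coefficients, and that hypothesis is only the normalization from lemma \ref{L:conjugation in E_q} fixing which $\tilde{S}$ one starts from. So the proposal has a genuine gap at precisely the step the lemma rests on, and the route you sketch for closing it would fail.
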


This lemma is proved in item 4 of the proof of \cite[Th. 5.2]{Zheglov25} for $d=1$, for generic $d$ the proof is just the same.

\subsection{The proof of theorem \ref{T:parametrisation}}

Without loss of generality we can work over the algebraically closed field $\tilde{K}$.\footnote{All correspondences in the classification theorems \ref{T:classif2}, \ref{T:classif3}, \ref{T:classif_Schur_pairs} are relevant to the extension of scalars, i.e. if the sheaf $\cf$ with vanishing cohomologies is defined over an extension $\bar{K}$ of $K$, then the corresponding Schur pairs and the ring of commuting differential operators are determined over $\bar{K}$, and vice versa. As we'll see below, the corresponding partially normalised normal forms will be determined over $\bar{K}$ as well. This follows from generic theory of normal forms from \cite{GZ24}.} Let $\cf$ be a  torsion free rank $r$ sheaf on $C$ with vanishing cohomologies. Take $\pi$ as in the formulation, and choose some trivialisation 
$\hat{\phi}$ of the sheaf $\cf$ at the point $p$. By theorem \ref{T:classif3} this data corresponds to a  uniquely defined (up to a scale transform) {\it normalised} commutative elliptic subring $B\subset D_1$ of rank $r$, such that the generator $w_1$ corresponds to a normalised operator $P_1\in B$ with $\ord (P_1)=rq=r\ord (w_1)$ via the isomorphism $\co_C(C_0)\simeq B$ constructed in the correspondence of data in loc. cit. Moreover, the ring $B$ and the operator $P_1$ don't depend on change of trivialisation $\pi$ in the isomorphism class of this data (cf. remark \ref{R:proof_explanations} and  proof in \cite[Ch. 9,10]{Zheglov_book}), so that we can assume without loss of generality that $\pi$ is chosen so that $w_1$ corresponds to the monomial $z^{-rq}\in A$ in the corresponding embedded Schur pair $(A,W)$, and that this pair satisfies condition from proposition \ref{Q5.4}, i.e. $A\subset \tilde{K}((z^r))$. Besides, the generators $w_i$ corresponds to  elliptic differential operators $P_1, \ldots ,P_m$ with $\ord (P_i)=r\ord (w_i)$. 

By  \cite[Prop. 2.3]{GZ24}  and lemma \ref{L:normalisation} there exists a monic zeroth order  Schur operator $S\in \hat{D}_1^{sym}$ such that $B':=S^{-1}BS\in C(\partial^{rq})$ and $P_2'=S^{-1}P_2S$ is a partially normalised normal form of $P_2$ with respect to $P_1$, i.e. $B'$ is a partially normalised normal form w.r.t. $P_1'=\partial^{rq}, \ldots ,P_m'$ which have the same orders as $P_i$. Note that the scale transform is compatible with conjugation by $S$ and that the coefficients of $\hat{\Phi}(P_i')$ are invariant under any scale transform of $P_i'$ for all $i$. Note also that $A\simeq A'=\tilde{S}^{-1}\hat{\Phi}(B')\tilde{S}\subset \tilde{K}((\tilde{D}^{-r}))$ (as rings) and $W\simeq W'=F'\cdot \tilde{S}$ (as modules over $A$ ($A'$)), where $\tilde{S}$ is the operator from lemma \ref{L:conjugation in E_q} constructed for the operator $P_2'$, and the isomorphism is given via $z\mapsto \tilde{D}^{-1}$. Indeed, since $GCD(\ord (P_1),\ord (P_i))=r=\rk B$ for any $i>1$, we have by Wilson's theorem that the BC-polynomial for any pair $P_1,P_i$ satisfies condition of lemma \ref{L:solution_of_BC_equation}. So, by this lemma $\tilde{S}^{-1}\hat{\Phi}(P_2')\tilde{S}\in \tilde{K}((\tilde{D}^{-r}))$. Then it's easy to see that coefficients of all operators $\tilde{S}^{-1}\hat{\Phi}(P_i')\tilde{S}$, $i>2$ commute with $\tilde{D}^r$, and the pair $\partial^{rq}, \tilde{S}^{-1}\hat{\Phi}(P_i')\tilde{S}$ gives a solution to the BC-polynomial for the pair $P_1,P_i$. Then by the arguments from the proof of this lemma we get again $\tilde{S}^{-1}\hat{\Phi}(P_i')\tilde{S}\in \tilde{K}((\tilde{D}^{-r}))$. The isomorphism $W\simeq W'$ is obvious.

So, we get that the graded $\tilde{B}$-module $\tilde{F}$, $\tilde{A}$-module $\tilde{W}$, $\tilde{A'}$-module $\tilde{W'}$ and $\tilde{B'}$-module $\tilde{F'}$ are isomorphic (cf. section \ref{S:embedded}). Since the sheaf $\cf$ is determined up to an isomorphism by the graded $\tilde{A}$-module $\tilde{W}$, it can be recovered by $B'$. The form $B'$ is not defined uniquely, because the Schur operator $S$ is defined up to multiplication by an invertible operator from $C(\partial^r)\subset C(\partial^{rq})$. Namely, for any $S_0\in C(\partial^r)\subset C(\partial^{rq})$ such that $S_0^{-1}P_2'S_0$ is partially normalised (so that $S_0^{-1}B'S_0$ is another partially normalised normal form with respect to $\partial^{rq},S_0^{-1}P_2'S_0, \ldots S_0^{-1}P_m'S_0$) and such that the graded $\tilde{B'}$-module $\tilde{F'}$ is isomorphic to the graded $S_0^{-1}\tilde{B'}S_0$-module $\tilde{F'}$, the form $S_0^{-1}B'S_0$ will determine the same sheaf $\cf$. Note that the last condition on the isomorphism of graded modules implies the restriction on the order of $S_0$: it must be less that $r$ (otherwise these modules will not be isomorphic as graded modules). This explains why the equivalence class of the point $[B']$ from $X_{[\co_C(C_0)]}$ corresponds to the isomorphism class of $\cf$.

However, we need yet to trace out what happens if we choose another trivialisation of $\cf$ at the point $p$. We need to check that the corresponding partially normalised normal form $B''$ provides the same equivalence class of the point determined by $B'$. This can be done via Schur pairs from remark \ref{R:Quandt} corresponding to our embedded Schur pairs with fixed $\pi$. Any change of trivialisation is given by an invertible matrix $M$ from $GL_r(\tilde{K}[[y]])$, so that after change the Schur pair $(A,W)$ will be replaced by a pair $(A,W')$, where $W'=M\circ W=M\circ\rho \circ \hat{\varphi} (H^0(C\backslash p,\cf ))\subset K((y))^{\oplus r}$, and $M$ acts on $W\subset K((y))^{\oplus r}$ via right multiplication: each element from $W$ is raw  $w\in K((y))^{\oplus r}$, it maps to $w':=w\cdot M$. 
Define the matrix $\tilde{M}:=M|_{y\mapsto \tilde{D}^{-r}}\in M_r(\tilde{K}[[\tilde{D}^{-r}]])$. Note that $M$ (and $\tilde{M}$) can be decomposed as $M=M_0M_1$, where $M_0\in GL_r(\tilde{K})$, $M_1=E+M_1'$, $M_1'\in M_r(\tilde{K}[[y]]\cdot y)$ (correspondingly, $\tilde{M}=\tilde{M}_0\tilde{M}_1$ with similar properties). Next, note that $\tilde{M}_0=\psi \circ \hat{\Phi} (S')$, where $S'\in C(\partial^r)\subset \hat{D}_1^{sym}$ is an invertible operator, and $\tilde{M}_1=\psi (\tilde{S})$, where 
$\tilde{S}\in \tilde{K}^{\oplus r}[[\tilde{D}^{-1}]]$ is invertible monic zeroth order operator. Denote $\tilde{S}':= \hat{\Phi} (S')\in \tilde{K}^{\oplus r}[\tilde{D}^{-1},\tilde{D}]$, $\tilde{S}^f:=\tilde{S}'\tilde{S}\in \tilde{K}^{\oplus r}((\tilde{D}^{-1}))$. Let $\tilde{S}^f=\sum_i\tilde{s}_i^f\tilde{D}^{-i}$. 
Define the operator $\tilde{S}^{\oplus q}=\sum_i\tilde{s}_i\tilde{D}^{-i} \in \tilde{K}^{\oplus rq}((\tilde{D}^{-1}))$ by the rule $\tilde{s}_i=(\tilde{s}_i^f,\ldots , \tilde{s}_i^f)$ ($q$ times). Thus, $\tilde{S}^{\oplus q}$ commutes with $\tilde{D}^r$ and by construction $\Ord \tilde{S}^{\oplus q}<r$. 

The key observation is that, due to special choice of the isomorphism $\hat{\psi}$ from remark \ref{R:Q.2.9} we have 
$$
\hat{\psi}(w\cdot M) = \hat{\psi}(w)\cdot \tilde{S}^{f}, \mbox{\quad hence \quad} \hat{\psi}(W')=\hat{\psi}(W)\cdot \tilde{S}^{f}
$$
and similarly 
$$
\hat{\psi}(w\cdot M) = \hat{\psi}(w)\cdot \tilde{S}^{\oplus q}, \mbox{\quad hence \quad} \hat{\psi}(W')=\hat{\psi}(W)\cdot \tilde{S}^{\oplus q}
$$  
(here $\hat{\psi}(W)$, $\hat{\psi}(W')$ are the corresponding spaces of the embedded Schur pairs).  

Returning back to the embedded Schur pair $(A', W')$ constructed at the beginning of the proof, we get that a change of trivialisation of the sheaf $\cf$ leads to another Schur pair $(A', W'\cdot \tilde{S}^{\oplus q})$. We know that $B'=\tilde{S}A'\tilde{S}^{-1}$, and also we know that $\tilde{S}^{\oplus q}$ commutes with $\tilde{D}^r$ and $A'\in \tilde{K}((\tilde{D}^{-r}))$. Therefore, $\tilde{S}\tilde{S}^{\oplus q}A'(\tilde{S}^{\oplus q})^{-1}\tilde{S}^{-1}=B'$, thus the corresponding partially normalised normal form is the same.

Vice versa, any equivalence class $[x]$, $x\in X_{[\co_C(C_0)]}$,  determines a partially normalised normalised normal form $B'$ w.r.t. some $P_1'=\partial^{rq}, \ldots ,P_m'$ which have the orders from formulation. As it was noticed above, there is the operator $\tilde{S}$  from lemma \ref{L:conjugation in E_q} constructed for the operator $P_2'$ such that $A'=\tilde{S}^{-1}\hat{\Phi}(B')\tilde{S}\subset \tilde{K}((\tilde{D}^{-r}))$. Note that $W':=F'\cdot \tilde{S}$ has support equal to $F'$, because $\tilde{S}$ is a monic invertible operator of order zero. So, $(W', A')$ form an embedded Schur pair of rank $r$\footnote{Since  $\hat{\Phi}(C(\partial^{rq}))$  consists of Laurent polynomials whose coefficients at negative powers of $\tilde{D}$ are not constants, and since $\tilde{S}$ is monic and $A'\subset K((\tilde{D}^{-r}))$, it follows that $A'\cap \tilde{K}[[\tilde{D}^{-1}]]=\tilde{K}$.}. By theorems \ref{T:classif_Schur_pairs} and \ref{T:classif2} this Schur pair determines a normalised commutative subring $B\subset D_1$ of rank $r$ and a torsion free sheaf $\cf\simeq \Projj \tilde{W'}$ of rank one with vanishing cohomologies. 

As it was noticed above, the modules $\tilde{F}$, $\tilde{F}'$ and $\tilde{W}'$ are isomorphic as graded  modules, and all correspondences are compatible with the scale transform. By this reason the maps $[x=B'], x\in X_{[\co_C(C_0)]} \mapsto \cf$ and $\cf \mapsto [B'], B'\in X_{[\co_C(C_0)]}$ are mutually inverse.

\begin{rem}
\label{R:moduli}
This result indicates (like in the case of rank one sheaves) that the moduli space of {\it spectral sheaves} of rank $r$, i.e. sheaves with vanishing cohomologies, is an affine open subscheme of the compactified moduli space of vector bundles (the moduli space of coherent torsion free sheaves with fixed Hilbert polynomial, cf.  \cite{Rego2}, \cite{HL}). We hope to cover this issue in a subsequent work.

%It would be also interesting to obtain a similar description of $G$-bundles in terms of normal forms, cf. \cite{LM}, \cite{Gr2}, \cite{BF} and references therein.
\end{rem}

\section{An Example of parametrisation of rank 2 spectral sheaves over a Weierstrass cubic curve}
\label{S:example}

In this section we illustrate our main theorem and calculate the equivalence classes of points (i.e. conjugacy classes of normal forms in matrix presentation) parametrising all torsion free sheaves of rank 2 with vanishing cohomology groups on a curve of arithmetical genus one.  

Let's recall that first classification of commuting ordinary differential operators of orders 4 and 6 with a smooth spectral curve was given by Kichever and Novikov in \cite{KN}. Later Gr\"unbaum in \cite{Grun} using  (super)computer calculations extended this description thus describing all possible commuting differential operators of orders 4 and 6. Previato and Wilson in \cite{PW} posed a problem of explicit description of the spectral sheaf of the commutative subring of ODOs generated by a pair of commuting operators of orders 4 and 6 via their coefficients, and solved this problem for operators having a smooth spectral curve. They used Gr\"unbaum's notation of operators for calculations. In \cite{BZ} the authors solved this problem in generic case. 

In this section all necessary concepts and notations come from papers  \cite{Grun}, \cite{PW} and  \cite{BZ}. First we recall the description of sheaves from \cite{BZ}.

Suppose $\mathfrak{D}=\mathbf{C}[[x]][\partial]$. 
Now assume $B\subset\mathfrak{D}$ is a genus one and rank 2 commutative sub-algebra, with 
$$
B=\mathbf{C}[L_4,L_6]=\mathbf{C}[x,y]/(y^2-h(x))
$$
where $h(x)=4x^3-g_2x-g_3$ for some parameters $g_2,g_3\in \mathbf{C}$. The operator $L_4$ has the form
$$
L_4=(\partial^2+\frac{C_2(x)}{2})^2+[C_1(x)\partial+\partial C_1(x)]+C_0(x)
$$

Denote $\Delta:=g_2^3-27g_3^2$. The spectral curve $X=\overline{V(y^2-h(x))}$ is singular when $\Delta=0$. If $X$ is singular, in the case of $g_2=g_3=0$ we call it as cuspidal while $g_2\neq 0$ as nodal. If the spectral curve $X$ is singular then $s$ denotes its singular point. And $X\xrightarrow{\iota}X,(\lambda,\mu)\rightarrow(\lambda,-\mu)$ is the canonical involution of $X$ and $p=(0:1:0)$ is the  point at "infinity" of $X$.

Define $\mathsf{Sem}(X)$ as the category of semi-stable coherent sheaves on $X$ of slope one. Denote $\mathcal{S}$ as the unique rank one object of $\mathsf{Sem}(X)$ which is not locally free. $\mathcal{A}$ is the rank two Atiyah sheaf; if $X$ is singular and $q\in X$ is a smooth point, then $\mathcal{B}_q$ is the (uniquely determined) indecomposable rank two locally free sheaf from $\mathsf{Sem}(X)$, whose determinant is $O([p]+[q])$ and whose Fourier-Mukai transform is supported at $s$. If $X$ is cuspidal then $\mathcal{U}$ is the unique indecomposable object of $\mathsf{Sem}(X)$ of rank two which is not locally free; there are two such objects $\mathcal{U}_{\pm}$ in the case $X$ is nodal.

We have the following description of torsion free sheaves of rank two from $\mathsf{Sem}(X)$:

\begin{theorem}{(\cite[Cor 2.10]{BZ})}
	Let $X$ be a singular Weierstrass  cubic curve, $p\in X$, $\mathbb{P}^1\xrightarrow{\nu}X$ the normalization morphism, $\mathcal{S}=\nu_{*}(\mathcal{O}_{\mathbb{P}^1})$ and $\mathcal{A}=\mathcal{A}_2$ the rank two Atiyah bundle on $X$. Let $\cf$ be a semi-stable torsion free sheaf on $X$ of rank two and slope one, $\mathcal{T}$ be the Fourier-Mukai transform (see in \cite{BZ}) of $\cf$ and $Z=\text{Supp}(\mathcal{T})$.
	\begin{enumerate}
		\item If $\cf$ is locally free and indecomposable, then it is either isomorphic to $\mathcal{A}\otimes \mathcal{O}(q)$ for some smooth point $q\in X$ or to $\mathcal{B}_{q}$, where $\det(\mathcal{B}_{q})=\mathcal{O}(q+p)\in \Pic^2 (X)$.
		\item If $\cf$ is  indecomposable but not locally free, then it is isomorphic to one of the sheaves $\mathcal{U}_{\pm}$(nodal case) or $\mathcal{U}$. In this case, $Z=\{s\}$.
		\item If is decomposable, then it is isomorphic to $\mathcal{O}(q)\oplus \mathcal{O}(q')$, $\mathcal{O}+\mathcal{S}$ or $\mathcal{S}\oplus \mathcal{S}$ for some smooth points $q,q'\in X$. We have: $Z=\{q,q'\}\text{or} \{q,s\}\text{or} \{s\}$ respectively.
	\end{enumerate}
	For any object $\cf$ of Sem$(X)$ we have: $H^1(X,\cf)=0$. Moreover, $\Gamma(X,\cf)\xrightarrow{ev|_p}\cf_p$ is an isomorphism iff $p\notin Z$.
\end{theorem}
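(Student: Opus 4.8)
The plan is to separate the two kinds of assertions: the cohomological statements follow directly from Serre duality and semistability, while the structure statement is handled by transporting the problem through the Fourier--Mukai transform and classifying length-two torsion sheaves locally. Throughout I use that $X$ is a plane cubic, hence Gorenstein with $\omega_X\cong\mathcal{O}_X$, and that by Riemann--Roch $\chi(\cf)=\deg(\cf)+\rk(\cf)\bigl(1-p_a(X)\bigr)=\deg(\cf)$ since $p_a(X)=1$.

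First I would dispatch the cohomology. For $\cf\in\mathsf{Sem}(X)$ of slope one, Serre duality gives $H^1(X,\cf)^\vee\cong\Hom(\cf,\omega_X)=\Hom(\cf,\mathcal{O}_X)$. A nonzero map $\cf\to\mathcal{O}_X$ has image that is simultaneously a quotient of $\cf$ (slope $\ge 1$ by semistability) and a subsheaf of $\mathcal{O}_X$ (slope $\le 0$), which is absurd; hence $\Hom(\cf,\mathcal{O}_X)=0$ and $H^1(X,\cf)=0$. Together with $\chi(\cf)=\deg(\cf)=2$ this gives $h^0(X,\cf)=2=\rk(\cf)$, so $\Gamma(X,\cf)\xrightarrow{ev|_p}\cf_p$ is a map of two-dimensional spaces. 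It is an isomorphism iff it is injective, i.e. iff no global section vanishes at $p$, i.e. iff $H^0\bigl(X,\cf(-p)\bigr)=0$. Now $\cf(-p)$ is semistable of slope $0$, and $H^0(X,\cf(-p))=\Hom(\mathcal{O}_X,\cf(-p))$ is detected by the Fourier--Mukai support: it is nonzero precisely when $\mathcal{T}$ is supported at the point corresponding to $p$, that is, when $p\in Z$. This yields the last assertion.

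For the structure statement I would use the Fourier--Mukai transform $\Phi$ on $D^b(X)$ (self-dual, as $\hat{X}\cong X$ for a Weierstrass cubic), whose essential property is that it restricts to an exact equivalence between $\mathsf{Sem}(X)$ and the category of $0$-dimensional sheaves on $X$, sending rank to length. Thus a rank-two slope-one sheaf $\cf$ corresponds to a length-two torsion sheaf $\mathcal{T}$ with $Z=\Sup(\mathcal{T})$, and the classification reduces to enumerating length-two modules over the completed local rings $\hat{\mathcal{O}}_{X,z}$. At a smooth point the local ring is a discrete valuation ring, giving exactly the split module $k(z)^{\oplus 2}$ and the non-split $\hat{\mathcal{O}}_{X,z}/\idm_z^{2}$; at the singular point $s$ one analyzes the node $k[[u,v]]/(uv)$ or the cusp $k[[u,v]]/(v^2-u^3)$ and lists the finitely many length-two modules there. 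Applying $\Phi^{-1}$ and reading off local-freeness and determinant matches the cases: two distinct smooth points give $\mathcal{O}(q)\oplus\mathcal{O}(q')$; a single smooth point with split versus non-split structure gives $\mathcal{A}\otimes\mathcal{O}(q)$ or $\mathcal{B}_q$, distinguished by $\det=\mathcal{O}(2q)$ against $\mathcal{O}(p+q)$; and support at $s$ produces the non-locally-free sheaves $\mathcal{U}$, $\mathcal{U}_\pm$ together with the decomposables $\mathcal{O}\oplus\mathcal{S}$ and $\mathcal{S}\oplus\mathcal{S}$.

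The main obstacle is the Fourier--Mukai input itself: proving that $\Phi$ is an equivalence carrying $\mathsf{Sem}(X)$ onto torsion sheaves on the \emph{singular} curve requires degenerating the Poincar\'e kernel and a WIT-type vanishing argument at the node and cusp, and the subsequent local classification at $s$ must be matched correctly with the named sheaves while tracking the determinant and the transform's support. The smooth-point bookkeeping is routine; the delicate, representation-theoretic heart is distinguishing $\mathcal{A}\otimes\mathcal{O}(q)$ from $\mathcal{B}_q$ and separating the two nodal sheaves $\mathcal{U}_\pm$.
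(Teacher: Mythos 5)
A point of orientation first: the paper itself contains \emph{no proof} of this statement --- it is imported verbatim from \cite[Cor.~2.10]{BZ} and used as an external input for the rank-two example of Section~\ref{S:example}. So your proposal can only be compared with the strategy of the cited source, which it does broadly reconstruct: Serre duality plus semistability for the cohomological claims, and the Fourier--Mukai equivalence between $\mathsf{Sem}(X)$ and finite-length torsion sheaves, followed by a classification of length-two modules, for the structural claims. Your cohomological half is essentially sound: $\omega_X\cong\co_X$, the slope argument killing $\Hom (\cf ,\co_X)$, $\chi (\cf )=2$, and the reduction of the evaluation criterion to $H^0(X,\cf (-p))=0$ are all correct (the final equivalence $H^0(X,\cf (-p))=0 \Leftrightarrow p\notin Z$ still needs the transform, but the reduction is right).

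The structural half, however, contains a genuine error in the matching, visible already against the definitions recalled in the paper. You claim that a single smooth point $q$ carrying a split versus non-split length-two module yields $\mathcal{A}\otimes \mathcal{O}(q)$ versus $\mathcal{B}_q$, and that support at $s$ produces only non-locally-free sheaves together with the decomposables. This cannot be: by the paper's own definition, $\mathcal{B}_q$ is the indecomposable \emph{locally free} sheaf whose Fourier--Mukai transform is \emph{supported at $s$}; it does not arise from a smooth point at all. Moreover, the split and non-split length-two modules at a smooth point $q$ have the same class in the Grothendieck group, hence their inverse transforms have the same determinant $\mathcal{O}(2q)$ --- so your proposed determinant dichotomy $\mathcal{O}(2q)$ against $\mathcal{O}(p+q)$ is internally inconsistent. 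The correct matching is: at a smooth point $q$, the split module $k(q)^{\oplus 2}$ gives the decomposable $\mathcal{O}(q)\oplus \mathcal{O}(q)$ and the non-split module $\co_X/\idm_q^2$ gives $\mathcal{A}\otimes \mathcal{O}(q)$; support at $s$ yields not only $\mathcal{U}_{\pm}$ (nodal), $\mathcal{U}$ (cuspidal) and $\mathcal{S}\oplus \mathcal{S}$, but also the whole locally free family $\mathcal{B}_q$, coming from the one-parameter family of cyclic length-two quotients of the nodal (resp.\ cuspidal) local ring; and $\mathcal{O}(q)\oplus \mathcal{S}$ has $Z=\{q,s\}$, not $\{s\}$. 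Beyond this misassignment, the decisive inputs --- that the transform restricts to an equivalence of $\mathsf{Sem}(X)$ with finite-length torsion sheaves on a \emph{singular} Weierstrass cubic, converts rank to length, and detects local freeness and the support --- are precisely the content of \cite{BZ} and are assumed rather than proven, as you yourself flag. So the proposal is a reasonable plan in outline, but as a proof it both rests on the theorem's main machinery and gets the key identification wrong.
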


The spectral sheaves from theorem \ref{T:parametrisation} are connected with sheaves from $\mathsf{Sem}(X)$ in a very simple way: $\cf\in \mathsf{Sem}(X)$ iff $\cf\otimes \co_X(-p)$ is spectral in the sense of theorem \ref{T:parametrisation}, i.e. $H^0(X, \cf\otimes \co_X(-p))=H^1(X, \cf\otimes \co_X(-p))=0$. 

We have the following description of spectral sheaves in terms of coefficients of the operator $L_4$:

\begin{theorem}{(\cite[Summary]{BZ})}
	\label{T:Rank 2 sheaves}
	
\begin{enumerate}
		\item The spectral curve $X$ is singular and $\mathcal{F}\cong \mathcal{S}\oplus \mathcal{S}$ iff $C_1=0$ and $C_0$ is a constant.
		\item Let $L_4$ be formally self-adjoint (i.e. $C_1=0$) with $C_0'\neq 0$. Then $C_0,C_2$ are given by 
		$$
		C_0=f\quad\text{and} \quad C_2=\frac{K_2+2K_3+f^3-f'''f'+\frac{1}{2}(f'')^2}{f'^2}
		$$
		for some $f\in \mathbf{C}[[x]]$ and $K_2,K_3\in \mathbf{C}$. We have in this case: $g_2=-2K_3,g_3=\frac{1}{2}K_2$. The spectral sheaf $\mathcal{F}$ is locally free and self-dual. Moreover, $Z={q_+,q_-}=\{(\lambda,\mu_+),(\lambda,\mu_-)\}$, where $\lambda=-\frac{1}{2}f(0),\mu_{\pm}^2=h(\lambda)$, with 
		\begin{enumerate}
			\item[(1)] If $q_+\neq q_-$, then $\mathcal{F}\cong \mathcal{O}(q_+)\oplus\mathcal{O}(q_-)$. 
			\item[(2)] If $q_+= q_-=q$ is a smooth point of $X$, then $\mathcal{F}\cong \mathcal{O}(q)\oplus \mathcal{O}(q)$ in the case $f'$ has zero of order three at $x=0$ and $\mathcal{F}\cong \mathcal{A}\otimes \mathcal{O}(q)$ otherwise.
			\item[(3)] If $X$ is singular and $Z=\{s\}$ then $\mathcal{F}\cong \mathcal{B}_q$.
		\end{enumerate}
		\item Assume now $C_1\neq 0$, i.e. $L_4$ is not self-adjoint case. Then $C_0,C_1,C_2$ are given by 
		$$\begin{cases}
			C_0=-f^2+K_{11}f+K_{12}
			\\C_1=f'
			\\C_2=\frac{K_{14}-2K_{10}f+6K_{12}f^2+2K_{11}f^3-f^4+f''^2-2f'f'''}{2f'^2}
		\end{cases}$$
		where $f\in x\mathbf{C}[[x]]$ and $K_{10},K_{11},K_{12},K_{14}\in \mathbf{C}$. The Weierstrass parameters $g_2,g_3$ of the spectral curve $X$ are given by the formulae:
		$$
		\begin{cases}
			g_2=3K_{12}^2+K_{10}K_{11}-K_{14}
			\\g_3=\frac{1}{4}(2K_{10}K_{11}K_{12}+4K_{12}^3+K_{14}(K_{11}^2+4K_12)-K_{10}^2)
		\end{cases}
		$$
		Consider the following expressions:
		$$
		\begin{cases}
			a(\lambda)=(\lambda+\frac{1}{2}K_{12})^2+\frac{1}{4}K_{14}
			\\b(\lambda)=(\lambda+\frac{1}{2}K_{12})K_{11}-\frac{1}{2}K_{10}
		\end{cases}
		$$
		Let $\lambda_1,\lambda_2$ be the roots of $a(\lambda)$. Then $Z=\{q_1,q_2\}=\{(\lambda_1,-b(\lambda_1)),(\lambda_2,-b(\lambda_2))\}$, with
		\begin{enumerate}
			\item[(1)] If $q_1\neq q_2$ (i.e. $K_{14}\neq 0$), then $\mathcal{F}\cong \mathcal{O}(q_1)\oplus\mathcal{O}(q_2)$. 
			\item[(2)] If $q_1= q_2=q$ is a smooth point of $X$(i.e. $K_{14}=0$ but $K_{10}\neq 0$), then $\mathcal{F}\cong \mathcal{O}(q)\oplus \mathcal{O}(q)$ in the case $f'$ has zero of order three at $x=0$ and $\mathcal{F}\cong \mathcal{A}\otimes \mathcal{O}(q)$ otherwise.
			\item[(3)] The spectral curve is singular and $Z={s}$ if and only if $K_{10}=K_{14}=0$, in this case 
			$$
			X=\overline{V\Big(y^2-4(x+\frac{K_{12}}{2})^2(x-K_{12})\Big)}
			$$
		    \begin{enumerate}
		    	\item[(a)] The spectral sheaf $\mathcal{F}$ is locally free iff $\Delta:=6K_{12}+K_{11}\neq 0$. Moreover, $\mathcal{F}\cong \mathcal{B}_q$ with $q=(\frac{1}{4}K_{11}^2+K_{12},\frac{1}{4}K_{11}(6K_{12}+K_{11}^2))$.
		    	\item[(b)] If $\Delta=0$ then $\mathcal{F}$ is indecomposable but not locally free. If $X$ is cuspidal(i.e. $K_{11}=K_{12}=0$) then $\mathcal{F}\cong \mathcal{U}$. If $X$ is nodal (i.e. $K_{12}\neq 0$) then $\mathcal{F}$ is isomorphic to one of the sheaves $\mathcal{U}_{\pm}$.
		    \end{enumerate}
		    \item[(4)] The spectral curve $X$ is singular and the spectral sheaf $\mathcal{F}$ is decomposable and not locally free iff $K_{10}=(3K_{12}+\frac{1}{2}K_{11}^2)K_{11}$ and $K_{14}=-(3K_{12}+\frac{1}{2}K_{11}^2)^2\neq 0$. In this case, $Z=\{s,q\}$ and $\mathcal{F}\cong \mathcal{S}\oplus \mathcal{O}(q)$, where $q=(-2K_{12}-\frac{1}{4}K_{11}^2,-\frac{1}{2}K_{11}(K_{11}^2+6K_{12}))$.
		\end{enumerate}
		\end{enumerate}
\end{theorem}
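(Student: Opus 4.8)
The strategy I would follow combines three ingredients: a differential-algebraic reduction of the commutativity $[L_4,L_6]=0$ to an explicit system of ODEs on the coefficients $C_0,C_1,C_2$; the Burchnall--Chaundy relation $L_6^2=h(L_4)$, which reads off the Weierstrass data $g_2,g_3$ and hence the spectral curve $X$; and the Fourier--Mukai transform on $X$, which converts the spectral sheaf $\cf$ into a length-two torsion sheaf $\mathcal{T}$ whose support $Z$ determines the isomorphism class of $\cf$. Throughout I take $(X,p,\cf)$ to be the spectral datum attached to $B=\mathbf{C}[L_4,L_6]$ by the Krichever correspondence of theorem \ref{T:classif2}, with $L_4$ in the normal form displayed above.

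\emph{Deriving the coefficient formulas.} Writing $L_6$ as a general monic order-six operator and imposing $[L_4,L_6]=0$ yields a finite system of ODEs in $C_0,C_1,C_2$ (the stationary equations of a Boussinesq-type hierarchy). In the self-adjoint case $C_1=0$ I would integrate this system: putting $C_0=f$, the surviving equations force $C_2$ to equal the stated rational function of $f,f',f'',f'''$, with the two constants of integration packaged as $K_2,K_3$. The degenerate locus $C_0'=0$, i.e.\ $f$ constant, is exactly where the system trivialises, producing Case~1 ($\cf\cong\mathcal{S}\oplus\mathcal{S}$). The non-self-adjoint case $C_1\ne 0$ is treated identically, now with $C_1=f'$ forced and four constants $K_{10},K_{11},K_{12},K_{14}$, the extra freedom reflecting the larger centraliser near degenerations of the curve.

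\emph{Reading off the curve.} Expanding $L_6^2=h(L_4)$ and matching the integration constants produced above gives the displayed polynomial expressions for $g_2,g_3$ in the $K_i$; substituting these into $\Delta=g_2^3-27g_3^2$ detects the singular locus and distinguishes, at the singular point $s$, the nodal case $g_2\ne 0$ from the cuspidal case.

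\emph{Identifying the sheaf --- the hard step.} Here the real work lies. I would compute the Fourier--Mukai transform of $\cf$ and locate its support $Z$ through the Baker--Akhiezer function of $(L_4,L_6)$: the characteristic data $a(\lambda),b(\lambda)$ (respectively $\lambda=-\tfrac12 f(0)$, $\mu_\pm^2=h(\lambda)$ in the self-adjoint case) cut out the two points of $Z$. Feeding $Z$ into the classification preceding this theorem (Cor.~2.10) then pins down $\cf$: two distinct points give $\mathcal{O}(q_1)\oplus\mathcal{O}(q_2)$; a smooth double point gives the Atiyah bundle $\mathcal{A}\otimes\mathcal{O}(q)$ unless the governing extension splits, which occurs exactly when $f'$ vanishes to order three at $x=0$, yielding $\mathcal{O}(q)\oplus\mathcal{O}(q)$; and $Z=\{s\}$ forces one of $\mathcal{B}_q$, $\mathcal{U}$, $\mathcal{U}_\pm$ according to local freeness, controlled by the scalar $\Delta=6K_{12}+K_{11}$. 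The principal obstacle is precisely this last dictionary: one must track how the transform degenerates at $s$ and decide local freeness, indecomposability, and splitting there, which demands the fine structure theory of rank-two bundles on degenerate Weierstrass cubics rather than any formal manipulation of the operators.
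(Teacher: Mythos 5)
First, a point of fact: this theorem is not proved in the paper at all. It is quoted verbatim, with the explicit attribution \cite[Summary]{BZ}, as background for the example worked out in section \ref{S:example}; the authors use it (together with \cite[Cor 2.10]{BZ}, quoted immediately before it) as a known input against which they check their own theorem \ref{T:parametrisation}. So there is no in-paper proof to measure your proposal against --- the relevant proof lives in \cite{BZ}.

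Judged as a blind reconstruction of that proof, your outline follows essentially the right route: (i) the parametrisations of $C_0,C_1,C_2$ by a power series $f$ and constants $K_i$ do come from integrating the ODE system equivalent to $[L_4,L_6]=0$ (this part goes back to Gr\"unbaum \cite{Grun}, whose notation \cite{PW} and \cite{BZ} adopt); (ii) the Weierstrass parameters $g_2,g_3$ are read off from the Burchnall--Chaundy relation $L_6^2=h(L_4)$; (iii) the isomorphism type of $\cf$ is determined by computing the support $Z$ of its Fourier--Mukai transform and feeding it into the classification of semistable rank-two, slope-one sheaves on a Weierstrass cubic. However, your text is a strategy, not a proof, and the gap sits exactly where you acknowledge it does: step (iii) requires actually computing the transform of the spectral sheaf attached to $(L_4,L_6)$, showing that $Z$ is cut out by the stated expressions $a(\lambda)$, $b(\lambda)$ (resp.\ $\lambda=-\frac{1}{2}f(0)$, $\mu_{\pm}^2=h(\lambda)$ in the self-adjoint case), and then deciding local freeness, indecomposability and splitting of $\cf$ over the singular point $s$; none of this follows formally from anything you wrote, and it constitutes the technical heart of \cite{BZ}. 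The same caveat applies to step (i): the claim that the commutativity equations integrate to precisely the displayed rational expressions for $C_2$ (and that the integration constants assemble into $g_2,g_3$ as stated) is a substantial computation, carried out in the literature with computer algebra, not a consequence of the general ``stationary hierarchy'' heuristic you invoke.
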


In this section we'll mostly concentrate on the equation $F(X,Y)=Y^2-X^3+a=0$. To match notations from \cite{BZ}, where the Weierstrass equation is given as $F(X,Y)=Y^2-4X^3+g_2X+g_3=0$, let's observe that if $(L_4,L_6)$ is a solution pair to the equation:
$$
Y^2-4X^3+4a=0
$$ 
then $(L_4,\tilde{L_6})$ (where $\tilde{L}_6=\frac{L_6}{2}$) satisfies $F(X,Y)=Y^2-X^3+a=0$. For convenience, we'll still denote $\tilde{L}_6$ as $L_6$, with the assumptions $g_2=0,g_3=4a$.

In the following sections we give all possible partially normalised normal forms for the curve  $f(X,Y)=Y^2-X^3+a=0$, the way is as follows:
\begin{enumerate}
	\item Calculate $L_6$ via the expression of $L_4$.
	\item Calculate the  Schur operator $S$ via the equation $L_4S=SD^4$.
	\item Calculate  the normal form $Y$ of $L_6$ with respect to $L_4$ via the equation $L_6S=SY$.
	\item Calculate the partial normalisation of the normal forms we get above.
	\item Determine the isomorphism classes of spectral sheaves via the equivalence classes of points (conjugacy classes of partially normalised normal forms).
\end{enumerate}

\subsection{The normal forms of $L_6$ with respect to $L_4$}
\label{S:normal_form_ofL_6}

Within this section, fix $k=4$ and denote $I:=\sqrt{-1}$.  

\begin{lemma}
	\label{L:L6}
Suppose $(L_4,L_6)$ is a pair of monic differential operators with the BC-polynomial  $$f(X,Y)=Y^2-X^3+a=0$$
where
\begin{equation}\label{L46}
\begin{cases}
L_4=(\partial^2+\frac{C_2(x)}{2})^2+2C_1(x)\partial +C'_1(x)+C_0(x);
\\L_6=\partial^6+T_4(x)\partial^4+T_3(x)\partial^3+T_2(x)\partial^2+
T_1(x)\partial+T_0(x);
\end{cases}
\end{equation}

then we have
\begin{equation}\label{Tix}
\begin{cases}
T_4(x) = \frac{3}{2} C_2(x);
\\T_3(x) = 3 C_1(x) + 3 C'_2(x);
\\T_2(x)=\frac{3}{2}C_{0}(x)+\frac{3}{4}C^{2}_2(x)+\frac{9}{2}C'_{1}(x)+\frac{7}{2}C^{(2)}_{2}(x);
\\T_1(x)=\frac{3}{2}C_1(x)C_2(x)+\frac{3}{2}C'_{0}(x)+\frac{3}{2}C'_2(x)C_2(x)+4C^{(2)}_1(x)+2C^{(3)}_2(x);
\\T_0(x)=\frac{3}{2}C^2_1(x)+\frac{3}{4}C_0(x)C_2(x)+\frac{1}{8}C^3_2(x)+\frac{3}{4}C'_1(x)C_2(x)+\frac{3}{4}C_1(x)C'_2(x)
\\\qquad+\frac{1}{2}C'^2_2(x)+\frac{5}{4}C^{(2)}_0(x)+\frac{3}{4}C^{(2)}_2(x)C_2(x)+\frac{5}{4}C^{(3)}_1(x)
+\frac{1}{2}C^{(4)}_2(x);
\end{cases}
\end{equation}
\end{lemma}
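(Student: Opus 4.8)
The Burchnall--Chaundy hypothesis means precisely that $[L_4,L_6]=0$ and $L_6^2=L_4^3-a$, and the plan is to exploit commutativity to pin down the coefficients $T_i$ one at a time. First I would put $L_4$ into its expanded normalized form; squaring out $(\partial^2+\tfrac12 C_2)^2$ gives
$$L_4=\partial^4+C_2\partial^2+(C_2'+2C_1)\partial+\Big(\tfrac12 C_2''+\tfrac14 C_2^2+C_1'+C_0\Big),$$
so in particular $L_4$ has no $\partial^3$ term and $L_6$ (as written in \eqref{L46}) has no $\partial^5$ term; both facts are used below. Writing $u_2:=C_2$, $u_1:=C_2'+2C_1$, $u_0:=\tfrac12 C_2''+\tfrac14 C_2^2+C_1'+C_0$, the whole content of the lemma is to solve $[L_4,L_6]=0$ for $T_4,\dots,T_0$.

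The key computation is to expand $[L_4,L_6]$ using the Leibniz rule $[f\partial^a,g\partial^b]=\sum_{k\ge1}\big(\binom{a}{k}fg^{(k)}-\binom{b}{k}gf^{(k)}\big)\partial^{a+b-k}$ and to collect powers of $\partial$. Since the leading coefficients are constant the $\partial^8$ term cancels automatically, and the coefficients of $\partial^7,\partial^6,\dots,\partial^3$ form a triangular system: each is a first-order linear expression in a single new unknown, so each $T_i$ is recovered by one integration. Concretely, the $\partial^7$ coefficient is $4T_4'-6C_2'$, giving $T_4=\tfrac32 C_2$; the $\partial^6$ coefficient then yields $T_3=3C_1+3C_2'$; and the coefficients of $\partial^5,\partial^4,\partial^3$ determine $T_2,T_1,T_0$ in turn, now with right-hand sides that also involve the lower-order cross-commutators $[u_2\partial^2+u_1\partial+u_0,\,T_j\partial^j]$ already computed at the previous steps. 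Carrying this recursion through reproduces the five formulas \eqref{Tix}. The remaining coefficients of $\partial^2,\partial$ and $\partial^0$ do not involve any new $T_i$; they are the compatibility conditions of the overdetermined system, and they hold automatically because we assumed $(L_4,L_6)$ is a genuine commuting pair (equivalently, they encode the Weierstrass relations recorded through $g_2,g_3$ in Theorem \ref{T:Rank 2 sheaves}).

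The one genuinely delicate point is fixing the constants of integration produced at each step. Here I would invoke the precise normalization built into the polynomial $Y^2-X^3+a$: because $Y$ enters only through $Y^2$, the relation $L_6^2=L_4^3-a$ has no cross term and no first-power-of-$L_6$ term, so replacing $L_6$ by $L_6+\alpha L_4+\beta$ with constants $\alpha,\beta$ would introduce a forbidden $L_4L_6$ or $L_6$ contribution; hence $\alpha=\beta=0$ and every integration constant is forced to vanish. Equivalently, $L_6$ is the unique monic order-six operator with vanishing $\partial^5$ coefficient equal to $(L_4^{3/2})_+$, the differential part of the formal cube of the monic pseudodifferential square root $L_4^{1/2}$; this identification gives an alternative derivation of \eqref{Tix} by directly computing finitely many terms of $L_4^{1/2}$. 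The main obstacle is therefore not conceptual but the sheer bookkeeping of the commutator expansion, the expression for $T_0$ being the heaviest, and I would organize the computation strictly by decreasing order in $\partial$ so that every integration constant is resolved before the next coefficient is treated.
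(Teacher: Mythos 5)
Your route is genuinely different from the paper's, and it has a gap at exactly the point you flag as delicate. The paper never touches the commutator: it expands the relation $f(L_4,L_6)=L_6^2-L_4^3+a=0$ itself and reads off coefficients of $\partial^{10},\partial^9,\dots$, each of which is an \emph{algebraic} (not differential) linear equation in one new unknown -- the $\partial^{10}$ coefficient gives $2T_4-3C_2=0$, the $\partial^9$ coefficient then gives $T_3$, and so on -- so no integration constants ever arise. By working instead with $[L_4,L_6]=0$ you create the integration-constant problem, and the mechanism you offer to solve it is a non sequitur. The "forbidden term" argument proves only that \emph{at most one} operator commuting with $L_4$ satisfies $L_6^2=L_4^3-a$ (uniqueness also follows at once from $(Y_1-Y_2)(Y_1+Y_2)=0$ in the commutative domain $C(L_4)$); uniqueness does not tell you the \emph{values} of the constants of that unique solution. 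Worse, the freedom $L_6\mapsto L_6+\alpha L_4+\beta$ cannot even reach most of the constants: since $L_4=\partial^4+C_2\partial^2+(C_2'+2C_1)\partial+u_0$, this substitution shifts $T_4$ by the constant $\alpha$, leaves $T_3$ unchanged, and shifts $T_2,T_1,T_0$ by the non-constant functions $\alpha C_2$, $\alpha(C_2'+2C_1)$, $\alpha u_0+\beta$. So the integration constants hidden in $T_3,T_2,T_1$ are invisible to this argument altogether; they are pinned down precisely by the low-order coefficients ($\partial^2,\partial,\partial^0$) of the commutator, i.e.\ by the equations you discard as "holding automatically", and nothing you say shows their values are the ones implicit in \eqref{Tix}. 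Similarly, for $T_4$ and $T_0$ you show the constants are \emph{determined}, not that they vanish; to see they vanish one must substitute back into $L_6^2=L_4^3-a$ and compare coefficients -- which is exactly the computation the paper performs, making the commutator detour redundant.

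The correct repair is already in your text, but only as an unexecuted aside: the identity $L_6=(L_4^{3/2})_+$. Since $L_6$ is a monic differential operator with $L_6^2=L_4^3-a$, it equals the unique monic pseudodifferential square root of $L_4^3-a$, which is $L_4^{3/2}(1-aL_4^{-3})^{1/2}=L_4^{3/2}+O(\partial^{-6})$; taking differential parts gives $L_6=(L_6)_+=(L_4^{3/2})_+$. Promoting this to the main argument yields a valid derivation of \eqref{Tix}, of computational weight comparable to the paper's, and it cleanly explains why the $T_i$ do not involve $a$. But as your proof is currently structured -- commutator recursion plus the $\alpha,\beta$ uniqueness argument -- it cannot produce the formulas \eqref{Tix}.
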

\begin{proof}
	This lemma is completed with the help of Wolfram Mathematica, here we only explain the first two steps of calculations: consider the coefficients at $\partial^{k}$ in $f(L_4,L_6)$ as the equations for $T_i(x)$.
	\begin{enumerate}
		\item Consider the coefficient at $\partial^{10}$, here only appears $T_{4}(x)$, namely
		$$2T_4(x)-3C_{2}(x)=0 $$ 
		Hence we get $T_4(x)$;
		\item Consider the coefficient at $\partial^{9}$, here $T_{4}(x),T_3(x)$ will appear, namely
		$$
		3 C_1(x) - T_3(x) + \frac{15}{2} C_2'(x) - 
		\frac{3}{2} T'_{4}(x)=0
		$$
		but we already know the expression of $T_4(x)$, so we get the expression of $T_{3}(x)$;
		\end{enumerate} 
Continuing this line of reasoning  we  get the expression of all $T_i(x)$.
\end{proof}

More precisely, in standard form (of HCPC), if we write
\begin{multline*}
	L_4=D^4+c_0 D^2+(c_1xD+c_2)D+(d_1x^2D^2+d_2xD+d_3)
	\\+(d_4x^3D^2+d_5x^2D+d_6x)+(d_7x^4 D^2+d_8x^3D+d_9x^2)+\dots
\end{multline*}
then by Lemma \ref{L:L6} $L_6$ should be in the (standard) form of 
\begin{multline*}
L_6=D^6+\frac{3}{2}c_0 D^4+(m_1xD+m_2)D^3+(m_3x^2D^2+m_4xD+m_5)D^2
\\+(m_6x^3D^3+m_7x^2D^2+m_8xD+m_9)D+
\\m_{10}x^4D^4+m_{11}x^3D^3+m_{12}x^2D^2+m_{13}xD+m_{14}+\dots
\end{multline*}
with the coefficient relation:
$$
\begin{cases}
	m_1=\frac{3}{2}c_1;m_2=\frac{3}{2}(c_1+c_2);
	\\m_{3}=\frac{3}{2}d_1;m_{4}=\frac{3}{2}(2d_1+d_2);m_5=\frac{3}{8}c_0^2+\frac{5}{2}d_1+\frac{3}{2}d_2+\frac{3}{2}d_3
	\\m_6 = \frac{3}{2} d_4;m_7 = \frac{9}{2}d_4 + \frac{3}{2} d_5;
	\\m_8 = \frac{3}{4} (c_0 c_1 + 2 (5 d_4 + 2 d_5 + d_6));m_9 = \frac{1}{4} (3 c_0 c_2 + 10 d_5 + 6 d_6);
	\\m_{10} = \frac{3}{2}d_7;m_{11} = 6 d_7 + \frac{3}{2}d_8;
	\\m_{12} = \frac{3}{8}c_1^2 + \frac{3}{4}c_0d_1+ 15 d_7 + \frac{9}{2}d_8 +\frac{3}{2}d_9;
	\\m_{13} = \frac{3}{2}b c_1 + \frac{3}{4}c_1^2 + \frac{3}{4}c_0d_2 + \frac{15}{2}d_8 + 
	3 d_9; 
	\\m_{14} = \frac{3}{2}b^2 - \frac{1}{16}c_0^3 + \frac{3}{4}b c_1 - \frac{1}{8} c_1^2 - \frac{1}{2}c_0d_1 + \frac{3}{4}c_0d_3 - 3 d_7 + \frac{5}{2}d_9,
\end{cases}
$$
where we set $b:=c_2-c_1$ to make formulae more readable. 
After that we transfer them into G-form.

Let $(L_{4},L_{6})$ be a solution pair to the equation  $F(X,Y)=Y^2-X^3+a=0$, with $\ord (L_{4})=4, \ord(L_{6})=6$. May assume that $L_{4}, L_6$ are monic ($L_{4}$ commutes with $L_{6}$ according to \cite{BC1}). According to \cite[Prop. 2.3]{GZ24}, there exists Schur operator $S$, with $\Ord (S)=0$, such that
$$
S^{-1}L_{4}S=\partial^{4}
$$
Let $H=S^{-1}L_{6}S$  be the normal form of $L_6$ with respect to $L_4$. Obviously  $H, \partial^4$ commute. Substituting this pair back to the original equation, we  get 
$$
S^{-1}L_{6}^{2}S=H^2=\partial^{12}-a
$$
On the other hand, according to \cite[Prop. 2.1]{GZ24}, $H$ should be of the form like
$$
H=D^6+\sum_{l=-3}^{4}\sum_{i=0}^3 c_{l,i}A_iD^l
$$
(here $D$ denotes either $\partial$ or $\int$ depending on the exponent).

\begin{lemma}
\label{L:2}
	Suppose an operator $H\in \hat{D}_{1}^{sym}\otimes_{K}\tilde{K}$ has the form above, then $H^2$ has the following shape:
$$
H^2= D^{12}+(2 c_{4,0}+ 2 A_2 c_{4,2}) D^{10}+ (2 c_{3,0} + 
2 A_2 c_{3,2}) D^9    + \dots
$$
\end{lemma}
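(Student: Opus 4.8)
The plan is to isolate the two top-order off-diagonal terms of $H^2$ by a direct multiplication, controlling orders so that only finitely many products can possibly contribute. First I would split $H = D^6 + R$, where $R := \sum_{l=-3}^{4}\sum_{i=0}^{3} c_{l,i}A_i D^l$ is the lower part, with $\Ord(R)\le 4$ (each summand $c_{l,i}A_iD^l$ has order $l\le 4$, since $A_i$ has order $0$). Expanding gives $H^2 = D^{12} + (D^6 R + R D^6) + R^2$. Because $\Ord(R^2)\le 8$ while the cross term $D^6 R + R D^6$ has order $\le 6+4 = 10$, every contribution to orders $12$, $10$ and $9$ comes exclusively from $D^6 R + R D^6$; there is no order-$11$ term at all, since $l\le 4$ forces $l+6\le 10$. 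Everything of order $\le 8$, in particular the whole of $R^2$, is absorbed into the ``$\dots$''.

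The one computational input needed is the commutation of $D$ past $A_i$. I would carry this out in $E_k$ (here $k=4$) via the embedding $\hat{\Phi}$, which is an injective homomorphism of $\tilde{K}$-algebras, so any identity checked on images is valid for the operators themselves. Under $\hat{\Phi}$ the operator $A_i$ maps to the constant vector $\Phi(A_i)$ and $D$ to $\tilde{D}$, and the defining relation $\tilde{D}^{-1}A_1 = \sigma(A_1)\tilde{D}^{-1} = \xi^{-1}A_1\tilde{D}^{-1}$ gives, after iterating, $D^l A_i = \xi^{il}A_i D^l$ for all $l\in\dz$; the constants $c_{l,i}\in\tilde{K}$ commute with both $D$ and $A_j$.

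Applying this with $l=6$ and using $\xi^2 = -1$, so that $\xi^{6i} = (-1)^i$, I compute
$$
D^6 R = \sum_{l,i} c_{l,i}\,\xi^{6i} A_i D^{l+6} = \sum_{l,i}(-1)^i c_{l,i} A_i D^{l+6}, \qquad R D^6 = \sum_{l,i} c_{l,i}A_i D^{l+6},
$$
whence $D^6 R + R D^6 = \sum_{l,i}\bigl((-1)^i + 1\bigr)c_{l,i}A_i D^{l+6}$. The factor $(-1)^i + 1$ equals $2$ for $i$ even and $0$ for $i$ odd, so only the terms with $i\in\{0,2\}$ survive, each doubled. Reading off the coefficient of $D^{10}$ (the case $l=4$) yields $2c_{4,0} + 2A_2 c_{4,2}$, and that of $D^9$ (the case $l=3$) yields $2c_{3,0} + 2A_2 c_{3,2}$, as asserted.

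The calculation is routine, and I do not expect a genuine obstacle; the only points requiring care are the root-of-unity bookkeeping in $D^6 A_i = \xi^{6i}A_i D^6 = (-1)^i A_i D^6$ and the verification that no order-$10$ or order-$9$ term escapes from $R^2$ or from any product other than $D^6 R + R D^6$. Notably the answer is robust against a sign error in the commutation relation, since $\xi^{6i} = \xi^{-6i} = (-1)^i$. Once the bound $\Ord(R^2)\le 8$ and the commutation relation are established, extracting the two displayed coefficients is immediate, and the same mechanism produces the lower-order coefficients hidden in ``$\dots$''.
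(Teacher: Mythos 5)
Your result and the mechanism behind it are correct, and your route is necessarily different from the paper's, since the paper's entire proof is the statement that the calculation was performed in Wolfram Mathematica. Your argument --- splitting $H=D^6+R$ with $\Ord (R)\le 4$, bounding $\Ord (R^2)\le 8$ by subadditivity of $\Ord$, and using $\partial^6A_i=\xi^{6i}A_i\partial^6=(-1)^iA_i\partial^6$ so that the cross terms carry the factor $(-1)^i+1$ (killing $i=1,3$ and doubling $i=0,2$) --- is a transparent, human-readable derivation of exactly the two displayed coefficients, and it also explains why no $D^{11}$ term can occur. This is genuinely more informative than the computer verification.

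One step, however, is not justified as written. The embedding $\hat{\Phi}$ is defined only on the subalgebra $\widehat{Hcpc}_B(4)\subset \hat{D}_1^{sym}\otimes_K\tilde{K}$ of operators whose homogeneous components are totally free of the $B_j$'s, and an arbitrary $H$ of the stated shape need \emph{not} lie there: for $l<0$ the symbol $D^l$ means $\int^{-l}$, and $\int$ is only a one-sided inverse of $\partial$ ($\partial \int =1$, but $\int\partial =1-\delta =1-B_1$, and generally $\int^m\partial^m=1-B_1-\dots -B_m$). So the principle ``any identity checked on images is valid for the operators themselves'' is not available, and indeed your identity $RD^6=\sum_{l,i}c_{l,i}A_iD^{l+6}$ is false in $\hat{D}_1^{sym}$ for generic coefficients: each term with $l<0$ contributes in addition $-c_{l,i}A_i(B_1+\dots +B_{-l})\partial^{6+l}$ (the other cross term is unproblematic, since $\partial^6\int^{-l}=\partial^{6+l}$ exactly). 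Two easy repairs exist. Either invoke the situation in which the lemma is actually applied: $H=S^{-1}L_6S$ commutes with $\partial^4$, and $C(\partial^4)\subset \widehat{Hcpc}_B(4)$ by the description recalled in the list of notations (cf. \cite[Lem. 2.4]{GZ24}); the centralizer conditions on the negative coefficients annihilate precisely those $B_n$'s, because $A_iB_n=\xi^{i(n-1)}B_n$. Or, covering the literal statement with arbitrary $c_{l,i}$, multiply directly in $\hat{D}_1^{sym}$ and observe that every correction term has order $6+l\le 5$, hence is absorbed into the ``$\dots$'' together with $R^2$. With either patch, only the $l=3,4$ terms (pure powers of $\partial$) can reach orders $9$ and $10$, and your extraction of the coefficients stands.
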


This calculation is completed with the help of Wolfram Mathematica. 

Although the expression of $H^2$ is very long, we can deal with it step by step. First look at the coefficients at $D^9,D^{10}$ in $H^2=D^{12}-a$, we will get the following equations:
\begin{equation}\label{E ii}
\begin{cases}
2c_{4,0}+2c_{4,2}A_{2}=0 & (D^{10}) \\
2c_{3,0}+2c_{3,2}A_{2}=0 & (D^{9})
\end{cases}
\end{equation}
Hence, we have:
$$
c_{4,0}=0;c_{4,2}=0;c_{3,0}=0;c_{3,2}=0;
$$

This helps us  to calculate some part of the coefficients in $H$. Still the coefficients like $c_{41},c_{43},c_{31},c_{33}$ are unknown for us. Consider the Schur operator for $L_4$:
\begin{equation}\label{S}
S=S_{0}+S_{-1}+S_{-2}+\ldots \quad \Ord(S_{-i})=i,\forall i\in \mathbb{N}
\end{equation}
 By \cite[Prop. 2.3]{GZ24}  we can assume that $S_{0}=1$, $S_{-1}=0$. Then calculate $S_{-2},H_4$ and $S_{-3},H_1$, etc. step by step. For example from the homogeneous component of order $-2$ we will find $S_{-2}$ and $c_{41},c_{43}$. See the following Lemma:

\begin{lemma}{(Construction of $S_{-2}$ and $c_{4,1},c_{4,3}$)}
	\label{L:c41}

 Suppose $S$ is a Schur operator for $L_4$ such that $S^{-1}L_4S=D^4$, $S_{0}=1, S_{-1}=0$, $L_4,L_6$ are in the form of 
$$
\begin{cases}
L_4=D^4+c_0 D^2+\dots \\
L_6=D^6+\frac{3}{2}c_0 D^4+\dots
\end{cases}
$$ 
where \dots at $L_4$ means the terms whose $\Ord <2$, then we have
\begin{enumerate}
	\item $S_{-2}=\frac{1}{8}c_0((I-1)A_1-A_2+(-I-1)A_3-2\Gamma_1+3)D^{-2}$;
	\item $c_{4,1} = \frac{1 - I}{4} c_0;c_{4,3} =\frac{1 + I}{4} c_0.$
\end{enumerate}
\end{lemma}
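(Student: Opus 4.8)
The plan is to compute $S_{-2}$ and then the coefficients $c_{4,1},c_{4,3}$ recursively from the two defining relations $L_4S=S\partial^4$ and $L_6S=SH$, equating homogeneous components of decreasing order as in the Schur--Sato scheme of \cite[Lem. 3.3]{GZ24}. Since $S_0=1$ and $S_{-1}=0$, and since $L_4=\partial^4+c_0\partial^2+(\text{terms of }\Ord<2)$ has no homogeneous component of order $3$, the first relation in which $S_{-2}$ occurs is its order-$2$ component, which reads
$$[\partial^4,S_{-2}]=-c_0\partial^2 .$$
First I would solve this commutator equation for $S_{-2}$. As a component of a Schur operator, $S_{-2}$ satisfies condition $A_4(0)$, hence is an HCP totally free of $B_j$ with $Sdeg_A(S_{-2})<2$; thus in G-form
$$S_{-2}=\Bigl(\sum_{i=0}^{3}f'_{0,i}A_i+\sum_{i=0}^{3}f'_{1,i}\Gamma_1A_i\Bigr)D^{-2}$$
with finitely many unknown constants $f'_{l,i}\in\tilde K$.

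The computation itself I would carry out after applying the embedding $\hat{\Phi}$ into $E_4$, where the relation $\tilde D^{-1}a=\sigma(a)\tilde D^{-1}$ with $\sigma(A_1)=\xi^{-1}A_1$, $\sigma(\Gamma_1)=\Gamma_1+1$ turns the operator equation into a linear system on the coefficients. Writing $\hat{\Phi}(S_{-2})=\tilde s\,\tilde D^{-2}$, the commutator $[\tilde D^4,\tilde s\,\tilde D^{-2}]$ is computed by transporting $\tilde D^4$ across $\tilde s$ via $\sigma$; because $\xi^4=1$, the operator $\partial^4$ fixes each $A_i$ and acts on $\Gamma_1$ only by a constant shift. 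Hence the equation constrains solely the $\Gamma_1$-weight of $\tilde s$, pinning down the coefficient of $\Gamma_1$ to be $-\frac14c_0$, while the residual freedom (the part commuting with $\partial^4$, i.e.\ the $A_i$-content) is fixed by the normalisation singling out the Schur operator uniquely (\cite[Prop. 2.3]{GZ24} together with Remark \ref{R:normalisation}). Solving the resulting system yields the stated $S_{-2}=\frac18c_0\bigl((I-1)A_1-A_2+(-I-1)A_3-2\Gamma_1+3\bigr)D^{-2}$.

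For the second assertion I would pass to $L_6S=SH$. By Lemma \ref{L:L6} one has $L_6=\partial^6+\frac32c_0\partial^4+(\text{terms of }\Ord<4)$, while $H=D^6+\sum_{l,i}c_{l,i}A_iD^l$ with $c_{4,0}=c_{4,2}=0$ already known from \eqref{E ii} (via Lemma \ref{L:2}). Extracting the order-$4$ component of $L_6S=SH$, which is the first one into which the previously found $S_{-2}$ enters at the level of $D^4$, gives
$$H_4=[\partial^6,S_{-2}]+\tfrac32c_0\partial^4,\qquad H_4=c_{4,1}A_1\partial^4+c_{4,3}A_3\partial^4 .$$
Computing $[\tilde D^6,\tilde s\,\tilde D^{-2}]$ and using $\xi^6=-1$ (so that $\partial^6$ now multiplies $A_1$ and $A_3$ by $-1$ and fixes $A_0,A_2$, while shifting $\Gamma_1$), the $A_0$-part of $H_4$ cancels automatically against $\frac32c_0\partial^4$ in view of the value of the $\Gamma_1$-coefficient already obtained (consistent with $c_{4,0}=0$), the $A_2$-part vanishes, and reading off the $A_1$- and $A_3$-parts gives $c_{4,1}=\frac{1-I}{4}c_0$ and $c_{4,3}=\frac{1+I}{4}c_0$.

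The main obstacle is the noncommutative bookkeeping: one must correctly transport $\partial^4$ and $\partial^6$ across $A_i$ and $\Gamma_1$ through the $\sigma$-action in $E_4$ --- in particular keeping track of the crucial contrast that $\partial^4$ commutes with all $A_i$ whereas $\partial^6$ does not, because $\xi^4=1$ but $\xi^6=-1$ --- and one must correctly fix the residual gauge freedom of $S_{-2}$ by the normalisation defining the Schur operator (getting the signs and the $\Gamma_1$-shifts right is the delicate point). In practice this bookkeeping is exactly what is delegated to computer algebra, as in Lemmas \ref{L:L6} and \ref{L:2}.
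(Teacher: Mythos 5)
Your overall scheme is the same as the paper's: the relations $L_4S=S\partial^4$ and $L_6S=SH$, extraction of the order-$2$ and order-$4$ homogeneous components, and your treatment of the second step is exactly right --- given the stated $S_{-2}$ one gets $c_{4,1}=-2a_1$, $c_{4,3}=-2a_3$ (where $a_1,a_3$ are the $A_1$-, $A_3$-coefficients of $S_{-2}$), the $A_0$-part cancels against $\tfrac32 c_0\partial^4$ because the $\Gamma_1$-coefficient equals $-c_0/4$, and the $A_2$-part vanishes since $\xi^{12}=1$. The gap is in the first step, and it sits precisely where the specific numerical coefficients of $S_{-2}$ come from.

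You transport $[\partial^4,S_{-2}]=-c_0\partial^2$ into $E_4$ via $\hat{\Phi}$ and conclude that the equation "constrains solely the $\Gamma_1$-weight", the $A_i$-content being "the part commuting with $\partial^4$", i.e.\ pure gauge to be removed by normalisation. That is true in $E_4$ but false in $\hat{D}_1^{sym}$, and the discrepancy is not harmless. Writing $\xi=I$ and $S_{-2}=\bigl(\sum_i a_iA_i+b_0\Gamma_1\bigr)D^{-2}$, and using $\partial^4A_i=A_i\partial^4$, $\partial^4\Gamma_1=(\Gamma_1+4)\partial^4$, $\int^2\partial^4=(1-B_1-B_2)\partial^2$, $A_iB_1=B_1$, $A_iB_2=I^iB_2$, $\Gamma_1B_1=0$, $\Gamma_1B_2=B_2$, one finds in the operator ring
$$
[\partial^4,S_{-2}]=\Bigl(4b_0+\bigl(\textstyle\sum_i a_i\bigr)B_1+\bigl(\textstyle\sum_i a_iI^i+b_0\bigr)B_2\Bigr)\partial^2 ,
$$
so equating with $-c_0\partial^2$ forces not only $b_0=-c_0/4$ but also the two further conditions $\sum_i a_i=0$ and $\sum_i a_iI^i=c_0/4$; these are exactly what $\hat{\Phi}$ erases, since $\hat{\Phi}$ is only defined on operators totally free of $B_j$, and total freeness is a \emph{constraint} on the $a_i$ (your G-form ansatz with arbitrary constants is not automatically totally free of $B_j$). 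In particular your identification of the $A_i$-content with the centralizer of $\partial^4$ is wrong: $[\partial^4,A_1\int^2]=(B_1+IB_2)\partial^2\neq 0$, and only the two-parameter family with $\sum a_i=\sum a_iI^i=0$ commutes with $\partial^4$; only that family is genuine gauge freedom to be fixed by the normalisation convention of \cite[Lem.~2.5, Prop.~2.3]{GZ24}. As you set it up, your system (commutator equation in $E_4$ plus gauge-fixing) is underdetermined by two parameters and cannot produce the coefficients $\tfrac{c_0}{8}(3,\,I-1,\,-1,\,-I-1)$ asserted in the lemma; once the two $B_j$-conditions are reinstated --- i.e.\ once the computation is done in $\hat{D}_1^{sym}$, which is what the paper does --- the solution set becomes the correct two-dimensional gauge orbit, the stated $S_{-2}$ is the normalised representative, and your second step then yields $c_{4,1}=\tfrac{1-I}{4}c_0$, $c_{4,3}=\tfrac{1+I}{4}c_0$ as claimed.
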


\begin{proof}
Consider the equation $L_4S-SD^4=0$, in the following form:
	$$
	(D^4+c_0 D^2+\dots)(1+S_{-2}+\dots)-(1+S_{-2}+\dots)D^4=0
	$$
	Consider the equations of $\Ord =(-2)$, we get
	$$
	[\partial^4, S_{-2}]=-c_{0}D^2
	$$
	Following the calculation rules of  \cite[Lem. 2.5]{GZ24}, we get 
	$$
	S_{-2}=\frac{1}{8}c_0((I-1)A_1-A_2+(-I-1)A_3-2xD+3)D^{-2}
	$$

    Now  look back at the equation $L_4S-SH=0$, i.e. 
	$$
	(D^6+\frac{3}{2}c_0D^4+\dots)(1+S_{-2}+\dots)-(1+S_{-2}+\dots)(D^6+(c_{41}A_1+c_{43}A_3)D^4+\dots)=0
	$$
	Consider terms of $\Ord =4$, we get the equation
	$$
	D^6\cdot S_{-2}+\frac{3}{2}c_0D^4-S_{-2}\cdot D^6-(c_{41}A_1+c_{43}A_3)D^4=0
	$$
    Solve this equation (by viewing all coefficients of $1=A_0,A_1,A_2,A_3$ to be $0$), we have
	$$
	\begin{cases}
	c_0-2(c_{41}+c_{43})=0\\
	-Ic_{41}+c_{43}=0
	\end{cases}
	$$
	Hence we get 
	$$
	c_{41} = \frac{1 - I}{4} c_0;c_{43} =\frac{1 + I}{4} c_0. 
	$$
\end{proof}

\begin{rem}
	 Following this line of calculations, other homogeneous components $S_{i}$ with $i$ from $-2$ to $-7$ can be found,  and all calculations can be provided in the same way:
	\begin{enumerate}
		\item Find $m_{j}$ from the equation $L_6^2=L_4^3-a$.
		\item Calculate $S_{-i}$ from the corresponding homogeneous terms in equation $L_4S=SD^4$.
		\item Calculate $c_{i,j} $ from the homogeneous part of $\Ord=6+i$ in the equation $L_6S=SY$.
	\end{enumerate}
    To save space, we only give here the calculations for $S_{-2}$, and omit the rest of them for other $S_{i}$.

\end{rem}

Below we list all coefficients of $H$ (the normal forms of $L_6$ with respect to $L_4$), where we set again $b:=c_2-c_1$: 

$
\left\{\begin{aligned}
c_{4,0}&=0\\
c_{4,1}&=\frac{1-I}{4} c_0\\
c_{4,2}&=0\\
c_{4,3}&=\frac{1+I}{4} c_0 
\end{aligned}
\right.
$

$
\left\{\begin{aligned}
c_{3,0}&=0\\
c_{3,1}&=\frac{1-I}{2} b\\
c_{3,2}&=0 \\
c_{3,3}&=\frac{1+I}{2} b \\ 
\end{aligned}
\right.
$

$
\left\{\begin{aligned}
c_{2,0}&=-\frac{1}{8} c_0^2,
\\
c_{2,1}&=\frac{1}{4}\left((1+I) d_1-d_2+(1-I) d_3\right) \\
c_{2,2}&=0 \\
c_{2,3}&=\frac{1}{4}\left((1-I) d_1-d_2+(1+I) d_3\right) \\ 
\end{aligned}
\right.
$

$
\left\{\begin{aligned}
c_{1,0}&=-\frac{1}{4} b c_0,\\
c_{1,1}&=\frac{1}{8}\left(2 b c_0+c_0 c_1-6 I d_4+(2+2 I) d_5-2 d_6\right) \\
c_{1,2}&=\frac{1}{4} b c_0, \\
c_{1,3}&=\frac{1}{8}\left(2 b c_0+c_0 c_1+6 I d_4+(2-2 I) d_5-2 d_6\right) \\
\end{aligned}
\right.
$

$
\left\{\begin{aligned}
c_{0,0}&=0\\
c_{0,1}&=\frac{1-I}{32}[(8+8 I) b^2-c_0^3+4 b_1 +c_0(-8 d_1+(2-2 I) d_2+4 d_3)\\
&\qquad -2 I\left(c_1^2-24 I d_7+(6+6 I) d_8-4 d_9\right)] \\
c_{0,2}&=\frac{1}{2} b^2 \\
c_{0,3}&=\frac{1+I}{32}[(8-8 I) b^2-c_0^3+4 b_1 
+c_0(-8 d_1+(2+2 I) d_2+4 d_3)\\
&\qquad-2 I(c_1^2+24 I d_7+(6-6 I) d_8-4 d_9)]
\end{aligned}
\right.
$

$
\left\{\begin{aligned}
c_{-1,0}=&-\frac{1}{4}[c_{0}(c_{1,1}+c_{1,3})+2Ib(-c_{2,1}+c_{2,3})]
\\c_{-1, 1} =& 
\frac{1}{16}(2 b c_0^2 + c_0^2 c_1 + 2 b d_2 - 4 b d_3 + 2 c_0 d_5 - 2 c_0 d_6 + 
I (c_0^2 c_1 
\\&\qquad + 8 b d_1 + 12 c_1 d_1 + 120 d_10 - 12 d_12 - 10 b d_2 - 
3 c_1 d_2 + 8 b d_3 + 12 c_0 d_4 - 2 c_0 d_6))
\\c_{-1,2}=&-\frac{1}{4}[c_{0}(c_{1,1}+c_{1,3})-2b(c_{2,1}+c_{2,3})]
\\c_{-1,3} =& -c_{-1,0} - c_{-1,1} - c_{-1,2}
\end{aligned}\right.
$

$
\left\{\begin{aligned}
	c_{-2, 0}=&\frac{1}{128}(-64 b^2 c_0 + 3 c_0^4 - 32 b c_0 c_1 + 32 c_0^2 d_1 + 16 d_1^2 - 
	8 c_0^2 d_2 - 16 d_1 d_2 
	\\&\qquad + 8 d_2^2 - 16 c_0^2 d_3 - 16 d_2 d_3 + 16 d_3^2 + 
	96 b d_4 - 64 b d_5 + 32 b d_6 + 192 c_0 d_7 - 48 c_0 d_8)
	\\c_{-2,1}=&\frac{-1-I}{256}((-32 + 64 I) b^2 c_0 - 3 I c_0^4 + 32 I b c_0 c_1 + 8 c_0 c_1^2 - 
	32 I c_0^2 d_1- 16 I d_1^2+ (8 + 8 I) c_0^2 d_2
	\\&\qquad  - (16 - 16 I) d_1 d_2 + (8 - 
	8 I) d_2^2  + 16 I c_0^2 d_3 + 32 d_1 d_3 - (16 - 16 I) d_2 d_3
	\\&\qquad - 16 I d_3^2 - 96 I b d_4 + 64 I b d_5 - 32 I b d_6 - 
	192 I c_0 d_7 + (48 + 48 I) c_0 d_8 - 32 c_0 d_9)
	\\c_{-2,2} =& I c_{-2,0} - (1 - I) c_{-2,1}
	\\c_{-2,3} =& (-1 - I) c_{-2,0} - I c_{-2,1}
\end{aligned}\right.
$

$
\left\{\begin{aligned}
c_{-3,0} =& \frac{1}{32}(-4 b^2 c_1 - 2 b c_1^2 + 4 b c_0 d_1 - 2 c_0 c_1 d_1 - 2 b c_0 d_2 + 
c_0 c_1 d_2 - 4 b c_0 d_3
\\&\qquad - 6 d_2 d_4 + 12 d_3 d_4 - 4 d_1 d_5 + 4 d_2 d_5 - 
4 d_3 d_5 + 4 d_1 d_6 - 2 d_2 d_6 + 48 b d_7 
\\&\qquad - 24 b d_8 + 8 b d_9+ I (2 b c_0^3 + c_0^3 c_1 + 2 b c_0 d_2 - 4 b c_0 d_3 + 2 c_0^2 d_5 - 
2 c_0^2 d_6 - 16 c_0 c_{-1, 1}))
\\c_{-3,1} =& I c_{-3,0}\\
c_{-3,2} =& -c_{-3,0} \\
c_{-3,3} =& -I c_{-3,0}\\
\end{aligned}
\right.
$

\subsection{Self-adjoint case}
\label{S:self-adjoint}

To simplify notations, and to be able to apply our calculations also to the examples of papers by authors of \cite{MZh}, \cite{SMZh}, we may follow the expression of $L_4$ as in loc. cit.: in self-adjoint case, $L_4=(D^2+\frac{V(x)}{2})^2+W(x)$ with
$$
\begin{cases}
V(x)=v_0+v_1x+\dots
\\W(x)=w_0+w_1x+\dots
\end{cases}
$$

Then the  formulae for $C_2$ from theorem \ref{T:Rank 2 sheaves} become:
\begin{equation}
	\label{E:Grum self case}
V=\frac{8a+W^3-W'''W'+\frac{1}{2}(W'')^2}{W'^2}
\end{equation}
Comparing old coefficients of $L_4$ from the previous section with the coefficients of $v_i,w_j$,  we get the following dictionary:
$$
\begin{cases}
b = 0;c_0 = 2 v_0; c_1 = 2 v_1;c_2 = 2 v_1;
\\ d_1 = 2 v_2; d_2 = 4 v_2; d_3 = v_0^2 + 2 v_2 + w_0;
\\ d_4 = 2 v_3;d_5 = 6 v_3; d_6 = (2 v_0 v_1 + 6 v_3 + w_1) ;
\\ d_7 = 2 v_4; d_8 = 8 v_4;d_9 = (v_1^2 + 2 v_0 v_2 + 12 v_4 + w_2);
\\ d_{10}= v_5;d_{11}=5v_5;d_{12}=\frac{1}{2} (v_1v_2+v_0v_3+20v_5+2 w_3)
\end{cases}
$$
Substituting all these coefficient relations back into $H^2$, we get
$$
H^2=D^{12}+\frac{1}{8}(w_0^3 - v_0 w_1^2 + 2 w_2^2 - 6 w_1 w_3)
$$ 

Hence we get the additional relation 
\begin{equation}
	\label{E:w in Self adjoint}
w_0^3 -  v_0 w_1^2 + 2 w_2^2 - 6 w_1 w_3=-8a
\end{equation}

Now we separate the calculations according to the zero order $\nu$ of $W'(x)$, following results from \cite{BZ}.

\subsubsection{$\nu=0$ case}
\label{S:nu0case}

Assume $\nu=0$, i.e. $w_1\neq 0$, we can solve the equation \eqref{E:w in Self adjoint} (notice that this solution coincides with formula \eqref{E:Grum self case})
$$v_0=\frac{8 a+w_0^3+2 w_2^2-6 w_1 w_3}{w_1^2}$$

Let's find the partial normalisation of $H$. Recall that by lemma \ref{L:normalisation}  we can choose invertible $T\in C(\partial^4)$ so that $\tilde{H}=THT^{-1}$ is partially normalised. For example we can choose in our case  $T=(T_2T_1)^{-1}$, with
\begin{equation}
	\label{E:T12}
T_1=1+\sum_{l=-3}^{-1}(\sum_{i=0}^3s_{l,i}A_i)D^l \quad \text{and} \quad T_2=1+s_{0,2}A_2
\end{equation}
with 
\begin{align}
	\label{E:T1 invertible}
\left\{\begin{aligned}
	s_{-1,3}&=-(s_{-1,0}+s_{-1,1}+s_{-1,2});
	\\s_{-2, 2}& = I (s_{-2, 0} + (1 + I) s_{-2, 1}); 
	\\s_{-2, 3} &= (-1 - I) s_{-2, 0} - I s_{-2, 1};
	\\s_{-3,1}&=I s_{-3,0};
	\\s_{-3,2}&=-s_{-3,0};
	\\s_{-3,3}&=-I s_{-3,0};
\end{aligned}\right\}
\end{align}

\begin{align}
	\label{E:Second conjugation}
	\left\{\begin{aligned}
		s_{-1,1}&=0;s_{-1,2}=-s_{-1,0};
		\\s_{-2,0}&=\frac{v_0}{4};s_{-2,1}=(-\frac{1}{8}+\frac{I}{8}) v_0;
		\\s_{-3,0}&=\frac{1}{4} v_0s_{-1,0}; s_{-1,0}=-\frac{w_2}{2 w_1};
		\\s_{0,2}&=\frac{4-w_1}{w_1+4}\quad \text{or}\quad 0;
		\end{aligned}\right\}
\end{align}

\begin{rem}
Condition (\ref{E:T1 invertible}) is equivalent to the invertibility of $T_1$. In condition (\ref{E:Second conjugation}), the first row of assignments makes the coefficient at $\partial^5$ in $\tilde{H}$ to be $0$, the second row  makes the coefficient at $\partial^4$ in $\tilde{H}$ to be $0$, the third row  makes the coefficient at $\partial^3$ in $\tilde{H}$ to be $0$. The fourth row is to modify the coefficients which multiply with $w_1$ in $\partial^2$ and $\partial^{-2}$, here in the case when $w_1=-4$ we don't need $T_2$, so assume $s_{0,2}=0$(i.e. $T_2=1$).
\end{rem}

For convenience, we'll denote the new $\tilde{H}$ as $H$. So, we have 
\begin{multline*}
H=D^6+\Big(\frac{1}{4}+\frac{I}{4}\Big)\Big(w_0 (A_3-I A_1)\Big)D^2+(-A_1-A_3)D
\\-\frac{I}{32} (A_1-A_3) (8 a+w_0^3)D^{-1}+\Big(\frac{1}{16}+\frac{I}{16}\Big)\Big( w_0^2 ((1-I)+I A_1-A_3)\Big)D^{-2}
\end{multline*}

We have 3 possibilities here:
\begin{enumerate}
	\item If $w_0^3+8a\neq 0$, in this situation $\mu_{\pm}^2=-\frac{w_0^3}{2}-4a\neq 0$, hence $q_{+}\neq q_{-}$. We have $\mathcal{F}\cong \mathcal{O}(q_+)\oplus\mathcal{O}(q_-)$, with $q_+=(-\frac{w_0}{2},\frac{\sqrt{-8a-w_0^3}}{2}),q_-=(-\frac{w_0}{2},\frac{-\sqrt{-8a-w_0^3}}{2})$. The matrix form is like:
	$$
	\left(
	\begin{array}{cccc}
		0 & -2 & D^4+\frac{w_0}{2} & 0 \\
		\frac{1}{16} (8 a+w_0^3) & 0 & 0 & D^4+\frac{w_0}{2} \\
		\frac{1}{4} ( 4D^8-2 w_0D^4+w_0^2)& 0 & 0 & 2 \\
		0 & \frac{1}{4} \left(4 D^8-2 w_0D^4 +w_0^2\right) & \frac{1}{16} \left(-8 a-w_0^3\right) & 0 \\
	\end{array}
	\right)
	$$
	\item If $w_0^3+8a=0$, but $a\neq 0$, then $q_+=q_-=q$ is the smooth point of $X$. We have $\mathcal{F}\cong \mathcal{O}(q)\otimes \mathcal{A}$, with $q=(-\frac{w_0}{2},0)$. The matrix form is like:
	$$
	\left(
	\begin{array}{cccc}
		0 & -2 & D^4-\sqrt[3]{a} & 0 \\
		0 & 0 & 0 & D^4-\sqrt[3]{a} \\
		a^{2/3}+\sqrt[3]{a} D^4+D^8 & 0 & 0 & 2 \\
		0 & a^{2/3}+\sqrt[3]{a} D^4+D^8 & 0 & 0 \\
	\end{array}
	\right)
	$$
	\item If $w_0=a=0$, then $X$ is singular, $q=(0,0)$. We have $\mathcal{F}\cong \mathcal{B}_{q}$. The matrix form is like:
	$$
	\left(
	\begin{array}{cccc}
		0 & -2 & D^4 & 0 \\
		0 & 0 & 0 & D^4 \\
		D^8 & 0 & 0 & 2 \\
		0 & D^8 & 0 & 0 \\
	\end{array}
	\right)
	$$ 
\end{enumerate}

\subsubsection{$\nu=1$ case}
\label{S:nu1}

Assume $\nu=1$, i.e. $w_1=0$ but $w_2\neq 0$, then  condition \eqref{E:w in Self adjoint} becomes:
$w_0^3+2 w_2^2=-8a$. And  formulae \eqref{E:Grum self case} becomes 
$$
v_0=\frac{3 (w_0^2-8 w_4)}{4 w_2}
$$

Here we also use $T_1,T_2$ determined by \eqref{E:T12} to partially normalise  $H$, with  condition \eqref{E:T1 invertible} and 

\begin{align}
	\label{E:Second conjugation self v=1}
	\left\{\begin{aligned}
		s_{-1,1}&=0;s_{-1,2}=-s_{-1,0};
		\\s_{-2,0}&=\frac{v_0}{4};s_{-2,1}=(-\frac{1}{8}+\frac{I}{8}) v_0;
		\\s_{-3,0}&=\frac{1}{4} v_0s_{-1,0}; s_{-1,0}=-\frac{3 w_3}{4 w_2};
		\\s_{0,2}&= 0;
		\end{aligned}\right\}
\end{align}

Again, still written $\tilde{H}$ as $H$, we get
\begin{multline*}
H=D^6+(\frac{1}{4}+\frac{I}{4})w_0 (A_3-I A_1)D^2
\\+(\frac{1}{4}+\frac{I}{4}) w_2 (A_1-I A_3)+(\frac{1}{16}-\frac{I}{16})w_0^2 ((1+I)-A_1-I A_3)D^{-2}
\end{multline*}

Since $w_2\neq 0$, we know $w^3_0+8a=-2w_2^2\neq 0$, thus $q_+\neq q_-$, thus $\mathcal{F}\cong \mathcal{O}(q_+)\oplus \mathcal{O}(q_-)$, where $q_+=(-\frac{w_0}{2},\frac{\sqrt{-8a-w_0^3}}{2}),q_-=(-\frac{w_0}{2},\frac{-\sqrt{-8a-w_0^3}}{2})$. The matrix form of $H$ looks like:
$$
\left(
\begin{array}{cccc}
	\frac{\sqrt{-8 a-w_0^3}}{2 \sqrt{2}} & 0 & D^4+\frac{w_0}{2} & 0 \\
	0 & -\frac{\sqrt{-8 a-w_0^3}}{2 \sqrt{2}} & 0 & D^4+\frac{w_0}{2} \\
	D^8-\frac{1}{2} D^4 w_0+\frac{w_0^2}{4} & 0 & -\frac{\sqrt{-8 a-w_0^3}}{2 \sqrt{2}} & 0 \\
	0 & D^8-\frac{1}{2} D^4 w_0+\frac{w_0^2}{4} & 0 & \frac{\sqrt{-8 a-w_0^3}}{2 \sqrt{2}} \\
\end{array}
\right)
$$

\subsubsection{$\nu=2$ case}
\label{S:nu2}

Assume $\nu=2$, i.e. $w_1=w_2=0$, but $w_3\neq 0$, then condition \eqref{E:w in Self adjoint} becomes: $w_0^3=-8a$. For the same, use $T_1,T_2$ determined by \eqref{E:T12} to partially normalise $H$, with  condition \eqref{E:T1 invertible} and 
\begin{align}
	\label{E:Second conjugation self v=2}
	\left\{\begin{aligned}
		s_{-1,1}&=0;s_{-1,2}=-s_{-1,0};
		\\s_{-2,0}&=\frac{v_0}{4};s_{-2,1}=(-\frac{1}{8}+\frac{I}{8}) v_0;
		\\s_{-3,0}&=\frac{1}{4} v_0s_{-1,0}; s_{-1,0}=1;
		\\s_{0,2}&= \frac{ 3 w_3-4}{3 w_3+4}\quad \text{or} \quad 0;
		\end{aligned} \right\}
\end{align}

We get
\begin{multline*}
	H=D^6+(\frac{1}{4}+\frac{I}{4})w_0 (A_3-I A_1)D^2
	\\-I(A_1-A_3)D^{-1}+(\frac{1}{16}+\frac{I}{16}) w_0^2 ((1-I)+I A_1-A_3)D^{-2}
\end{multline*}

Since $w_0^3+8a=0, q_{+}=q_-$, we only have two possibilities here
\begin{enumerate}
	\item $a\neq 0$, then $q\neq s$ is the smooth point of $X$. In this case $\mathcal{F}=\mathcal{A}\otimes \mathcal{O}(q)$, $q=(-\frac{w_0}{2},0)$. The matrix form looks like:
	$$
	\left(
	\begin{array}{cccc}
		0 & 0 & D^4-\sqrt[3]{a} & 0 \\
		2 & 0 & 0 & D^4-\sqrt[3]{a} \\
		a^{2/3}+\sqrt[3]{a} D^4+D^8 & 0 & 0 & 0 \\
		0 & a^{2/3}+\sqrt[3]{a} D^4+D^8 & -2 & 0 \\
	\end{array}
	\right)
	$$
	\item $a=0$, the curve is singular and $\cf$ is locally free at $q=(0,0)$, $\mathcal{F}\cong \mathcal{B}_q$. The matrix form looks like:
	$$
	\left(
	\begin{array}{cccc}
		0 & 0 & D^4 & 0 \\
		2 & 0 & 0 & D^4 \\
		D^8 & 0 & 0 & 0 \\
		0 & D^8 & -2 & 0 \\
	\end{array}
	\right)
	$$
	
\end{enumerate}

\subsubsection{$\nu =  3$ case}
\label{S:nu3}

Assume $\nu =3$, i.e. $w_1=w_2=w_3=0$, then  condition \eqref{E:w in Self adjoint} becomes: $w_0^3=-8a$. For the same, use $T_1$ determined by \eqref{E:T12} to partially normalise $H$, with condition \eqref{E:T1 invertible} and 
\begin{align}
	\label{E:Second conjugation self v=3}
	\left\{\begin{aligned}
		s_{-1,1}&=0;s_{-1,2}=-s_{-1,0};
		\\s_{-2,0}&=\frac{v_0}{4};s_{-2,1}=(-\frac{1}{8}+\frac{I}{8}) v_0;
		\\s_{-3,0}&=\frac{1}{4} v_0s_{-1,0}; s_{-1,0}=1;
		\\s_{0,2}&= 0;
		\end{aligned}\right\}
\end{align}

We get 
$$
H=D^6+(\frac{1}{4}+\frac{I}{4})w_0 (A_3-I A_1)D^2+(\frac{1}{16}+\frac{I}{16}) w_0^2 ((1-I)+I A_1-A_3)D^{-2}
$$

We have two possibilities here:
\begin{enumerate}
	\item $a\neq 0$, here since $\nu=3$, $q_+=q_-=q=(-\frac{w_0}{2},0)$ and $\mathcal{F}=\mathcal{O}(q)\oplus\mathcal{O}(q)$. The matrix form looks like 
	$$
	\left(
	\begin{array}{cccc}
		0 & 0 & D^4-\sqrt[3]{a} & 0 \\
		0 & 0 & 0 & D^4-\sqrt[3]{a} \\
		a^{2/3}+\sqrt[3]{a} D^4+D^8 & 0 & 0 & 0 \\
		0 & a^{2/3}+\sqrt[3]{a} D^4+D^8 & 0 & 0 \\
	\end{array}
	\right)
	$$
	\item $a=0$ so that $W(x)$ becomes a constant and $H=\partial^6$. $q=s=(0,0)$ and $\mathcal{F}\cong \mathcal{S}\oplus \mathcal{S}$. The matrix form of $H$ is 
	$$\left(
	\begin{array}{cccc}
		0 & 0 & D^4 & 0 \\
		0 & 0 & 0 & D^4 \\
		D^8 & 0 & 0 & 0 \\
		0 & D^8 & 0 & 0 \\
	\end{array}
	\right)$$
\end{enumerate}

\subsection{Non-self adjoint case}
\label{S:non-self_adj}

In non-self adjoint case, we use the expression  $L_4=(D^2+\frac{V(x)}{2})^2+R(x)D+DR(x)+W(x)$, with 
$$
\begin{cases}
	V(x)=v_0+v_1x+\cdots
	\\W(x)=w_0+w_1x+\cdots
	\\R(x)=r_0+r_1x+\cdots
\end{cases}
$$

Then the formulae from theorem \ref{T:Rank 2 sheaves} become ($\exists f\in x\mathbf{C}[[x]]$):
\begin{align}
	\label{E:Gr non self}
	\left\{\begin{aligned}
	W&=-f^2+K_{11}f+K_{12}
	\\R&=f'
	\\V&=\frac{K_{14}-2K_{10}f+6K_{12}f^2+2K_{11}f^3-f^4+f''^2-2f'f'''}{2f'^2}
	\end{aligned}\right\}
\end{align}
with 
$$
\begin{cases}
	g_2=3K_{12}^2+K_{10}K_{11}-K_{14}=0
	\\g_3=\frac{1}{4}(2K_{10}K_{11}K_{12}+4K_{12}^3+K_{14}(K_{11}^2+4K_{12})-K_{10}^2)=4a
\end{cases}
$$
Also denote $\nu$ as the zero order of $f'$, and substitute the coefficients of $V,W,R$ to $L_4$, we get
$$
\begin{cases}
	c_0=v_0; c_1=v_1;b_1=f_1;
	\\ d_1 = v_2; d_2 = 2 (2 f_2 + v_2); d_3 = \frac{1}{4} (4 K_{12} + 8 f_2 + v_0^2 + 4 v_2);
	\\d_4 = v_3; d_5 = 3 (2f_3 + v_3); d_6 = \frac{1}{2} (2 K_{11} f_1 + 12 f_3 + v_0 v_1 + 6 v_3);
	\\d_7 = v_4; d_8 = 4 (2 f_4 + v_4); 
	\\d_9 = 
	\frac{1}{4} (-4 f_1^2 + 4 K_{11} f_2 + 48 f_4 + v_1^2 + 2 v_0 v_2 + 
	24 v_4);
	\\d_{10} = v_5; d_{11} = 5 (2 f_5 + v_5); 
	\\d_{12} = \frac{1}{2} (-4 f_1 f_2 + 2 K_{11} f_3 + 40 f_5 + v_1 v_2 + v_0 v_3 + 20 v_5);
\end{cases}  
$$

Substitute into $H^2$, we get
\begin{equation}
	\label{E:f in non self adjoint}
H^2=D^{12}+\frac{1}{2} f_1 (3 K_{12} f_1 - 60 f_5 - 6 f_3 v_0 - 3 f_2 v_1 - f_1 v_2)A_2D^2+\cdots
\end{equation}

Since $H^2=D^{12}-a$, thus we get many equations on coefficients of HPCs (to save space we don't list all of them here). 

Now we separate the discussion according to the zero order $\nu$ of $f'(x)$.

\subsubsection{$\nu=0$ case}

Assume $\nu=0$, i.e. $f_1\neq 0$, solve \eqref{E:f in non self adjoint}, we get
$$
\begin{cases}
	f_3=\frac{1}{12 f_1}(-4 f_2 K_{11} v_0-f_1 K_{11} v_1-24 f_4 K_{11}-2 f_1^2 v_0+4 f_2^2+3 K_{12}^2);
	\\f_4=\frac{1}{24} (-4 f_2 v_0-f_1v_1-K_{10});
	\\f_5=\frac{1}{60} (3 f_1 K_{12}-f_1 v_2-6 f_3 v_0-3 f_2 v_1);
\end{cases}
$$

Use $T_1,T_2$ determined by \eqref{E:T12} to partially normalise $H$, with condition \eqref{E:T1 invertible} and 
\begin{align}
	\label{E:Second conjugation nonself v=0}
	\left\{\begin{aligned}
		s_{-1,1}&=0;s_{-1,2}=-s_{-1,0};
		\\s_{-2,0}&=\frac{v_0}{4};s_{-2,1}=(-\frac{1}{8}+\frac{I}{8}) v_0;
		\\s_{-3,0}&=\frac{2 (2 f_1^2-f_2 v_0)+I K_{12} v_0}{16 f_1};
		\\ s_{-1,0}&=\frac{I (K_{12}+2 I f_2)}{4 f_1};
		\\s_{0,2}&=\frac{2-f_1 }{f_1+2}\quad \text{or} \quad 0;
		\end{aligned}\right\}
\end{align}

The modified  expression of $H$:
\begin{multline*}
H=D^6+(A_1+A_3)D^3+\frac{1-I}{2}K_{12}A_1D^2
\\+\Big(\frac{1}{16} I (2 K_{12}^2+(K_{10}+8 I) K_{11})A_1-\frac{1}{16} I (2 K_{12}^2+(K_{10}-8 I) K_{11})A_3 \Big)D
\\+(-\frac{1}{8}+\frac{I}{8}) (K_{10}+K_{11}K_{12})A_1+(-\frac{1}{8}-\frac{I}{8}) (K_{10}-K_{11}K_{12})A_3
\\+\Big(\frac{K_{12}}{2}-\frac{I}{32}(K_{10} K_{11}^2 + 2 K_{12} (-8 I + K_{11} K_{12})A_1+\frac{I}{32}(K_{10} K_{11}^2 + 2 K_{12} (8 I + K_{11} K_{12})A_3)\Big)D^{-1}
\\+\Big(\frac{1}{8} (-K_{10} K_{11} - 2 K_{12}^2)+\frac{1-I}{16}K_{10}K_{11}A_1+\frac{1+I}{16}(4K_{12}^2+K_{10}K_{11})A_3\Big)D^{-2}
\\-\frac{1}{32}K_{12} (K_{10} K_{11} + 2 K_{12}^2)(1+IA_1-A_2-IA_3)D^{-3} 
\end{multline*}

Here we have 3 possibilities:
\begin{enumerate}
	\item $K_{14}=3K_{12}^2+K_{10}K_{11}\neq 0$, then $q_1\neq q_2$, the sheaf $\mathcal{F}\cong\mathcal{O}(q_1)\oplus \mathcal{O}(q_2)$, where $q_{1,2}=\frac{1}{2} (\sqrt{-K_{14}}-K_{12}),\frac{1}{2} (K_{10}\pm K_{11} \sqrt{-K_{14}})$.
	 The matrix form is like:
	$$\begin{pmatrix}
	G_1	& G_2 \\
	G_3	& -G_1
	\end{pmatrix}$$
	where 
	$$
	G_1=\left(
	\begin{array}{cc}
		\frac{1}{4} (-K_{10}+I K_{11} K_{12}) & -K_{11} \\
		\frac{1}{16} K_{11} \left(K_{10} K_{11}+2 K_{12}^2\right) & \frac{1}{4} (-K_{10}-I K_{11} K_{12}) \\
	\end{array}
	\right)
	$$
	$$
	G_2=\left(
	\begin{array}{cc}
		D^4+\left(\frac{1}{2}-\frac{I}{2}\right) K_{12} & 2 \\
		\frac{1}{8} \left(-K_{10} K_{11}-2 K_{12}^2\right) & D^4+\left(\frac{1}{2}+\frac{I}{2}\right) K_{12} \\
	\end{array}
	\right)
	$$
	 $$
	G_3=\left(
	\begin{array}{cc}
	\Omega_1 & 2 K_{12}-2 D^4 \\
		\frac{1}{8} (D^4-K_{12}) \left(K_{10} K_{11}+2 K_{12}^2\right) & \Omega_2 \\
	\end{array}
	\right)
	$$
	with $\Omega_1=\frac{1}{4} \left(-K_{10} K_{11}-\left((2+2 I) K_{12}^2\right)-(2-2 I) K_{12} D^4+4 D^8\right)$,
	\\$\Omega_2=\frac{1}{4} \left(-K_{10} K_{11}-\left((2-2 I) K_{12}^2\right)-(2+2 I) K_{12} D^4+4 D^8\right)$.
	\item $K_{14}=0$, but $K_{10}\neq 0$, then $q_1=q_2=(-\frac{K_{12}}{2},\frac{K_{10}}{2})$, and $\mathcal{F}\cong \mathcal{A}\otimes \mathcal{O}(q)$ (after substitution $K_{11}\rightarrow -\frac{3K_{12}^2}{K_{10}}$). The matrix form looks like
	$$\begin{pmatrix}
		G_4	& G_5 \\
		G_6	& -G_4
	\end{pmatrix}$$
	where 
	$$
	G_4=\left(
	\begin{array}{cc}
		-\frac{K_{10}^2+3 I K_{12}^3}{4 K_{10}} & \frac{3 K_{12}^2}{K_{10}} \\
		\frac{3 K_{12}^4}{16 K_{10}} & -\frac{K_{10}}{4}+\frac{3 I K_{12}^3}{4 K_{10}} \\
	\end{array}
	\right)
	$$
	$$
	G_5=\left(
	\begin{array}{cc}
		D^4+\left(\frac{1}{2}-\frac{I}{2}\right) K_{12} & 2 \\
		\frac{K_{12}^2}{8} & D^4+\left(\frac{1}{2}+\frac{I}{2}\right) K_{12} \\
	\end{array}
	\right)
	$$
	$$
	G_6=\left(
	\begin{array}{cc}
		\frac{1}{4} \left((1-2 I) K_{12}^2-(2-2 I) K_{12} D^4+4 D^8\right) & 2 K_{12}-2 D^4 \\
		\frac{1}{8} K_{12}^2 (K_{12}-D^4) & \frac{1}{4} \left((1+2 I) K_{12}^2-(2+2 I) K_{12} D^4+4 D^8\right) \\
	\end{array}
	\right)
	$$
	\item $K_{14}=K_{10}=0$, thus $K_{12}=0$ so that $K_{11}=0$, $X$ is cuspidal and $q=(0,0)$, the sheaf $\mathcal{F}\cong \mathcal{U}$. The matrix form looks like
	$$
	\left(
	\begin{array}{cccc}
		0 & 0 & D^4 & 2 \\
		0 & 0 & 0 & D^4 \\
		D^8 & -2 D^4 & 0 & 0 \\
		0 & D^8 & 0 & 0 \\
	\end{array}
	\right)
	$$
\end{enumerate}

\subsubsection{$\nu=1$ case}

Assume now $\nu=1$, i.e. $f_1=0, f_2\neq 0$, solve the equation $H^2=D^{12}-a$ by each HCP part, at the coefficient of $D^3$, we get 
$$
f_5=\frac{1}{20} (-2 f_3 v_0-f_2 v_1);
$$

Look into the coefficient of $A_2$ in $D^0$, we get terms of $K_{11}f_4$, so we should separate the discussion into $K_{11}=0$ and $K_{11}\neq 0$ case. 

We first discuss $K_{11}\neq 0$ case, we have 
$$
f_4=\frac{-4f_2 K_{11} v_0+4 f_0^2+3K_{12}^2}{24 K_{11}};
$$

Now look at the coefficient of $1=A_{4,0}$ in $D^0$, we get a 4th degree equation of $f_2$, solve it we get 
$$
f_2=\pm \frac{1}{2} \sqrt{-K_{10} K_{11}-3K_{12}^2}\quad \text{or}\quad \pm\frac{1}{2} \sqrt{K_{10} K_{11}-K_{11}^4-6 K_{11}^2 K_{12}-3 K_{12}^2}
$$

We should notice that, only the first two cases fit with \eqref{E:Gr non self} (since $V(x)\in \mathbf{C}[[x]]$), so we only consider $f_2=\pm\frac{1}{2} \sqrt{-K_{10} K_{11}-3K_{12}^2}$, (notice that $K_{14}=K_{10} K_{11}+3K_{12}^2$, so that $K_{14}$ can't be equal to $0$.)

Use $T_1,T_2$ determined by \eqref{E:T12} to partially normalise $H$, with condition \eqref{E:T1 invertible} and 
\begin{align}
	\label{E:Second conjugation nonself v=1}
	\left\{\begin{aligned}
		s_{-1,1}&=0;s_{-1,2}=-s_{-1,0};
		\\s_{-2,0}&=\frac{v_0}{4};s_{-2,1}=(-\frac{1}{8}+\frac{I}{8}) v_0;
		\\s_{-3,0}&=\frac{1}{4} v_0 s_{-1,0};
		\\ s_{-1,0}&=-\frac{3 f_3}{4 f_2};
		\\s_{0,2}&= 0;
		\end{aligned}\right\}
\end{align}

The modified  expression of $H$:
\begin{multline*}
	H=D^6+\left(\frac{1}{4}-\frac{I}{4}\right) (-2 I f_2 A_1-2 f_2 A_3+K_{12} A_1+I K_{12} A_3)D^2
	\\+\frac{1+I}{8 K_{11}} \left(2 f_2 \left(2 f_2 (A_3-I A_1)+K_{11}^2 (A_1-I A_3)\right)+3 K_{12}^2 (A_3-I A_1)\right)D
	\\+\frac{1}{16} \Big((4+4 I) f_2 K_{12} (A_1-(1-I) A(4,2)-I A_3)
	\\+(4+4 I) f_2^2 ((1-I) A(4,0)+I A_1-A_3)+K_{12}^2 (2 A(4,0)-(1-I) (A_1+I A_3))\Big)D^{-2}
\end{multline*}

Since $K_{14}\neq 0$ in this case, we know the sheaf $\mathcal{F}\cong \mathcal{O}(q_1)\oplus \mathcal{O}(q_2)$, with $q_{1,2}=\frac{1}{2} (\sqrt{-K_{14}}-K_{12}),\frac{1}{2} (K_{10}\pm K_{11} \sqrt{-K_{14}}))$. The matrix form looks like
$$
\begin{pmatrix}
G_7	& G_8 \\
G_9	& -G_7
\end{pmatrix}
$$
where 
$$
G_7=\begin{pmatrix}
\frac{2 f_2 \left(2 f_2+K_{11}^2\right)+3 K_{12}^2}{4 K_{11}}	& 0 \\
0	& \frac{-2 f_2 K_{11}^2+4 f_2^2+3 K_{12}^2}{4 K_{11}}
\end{pmatrix}
$$
$$
G_8=\begin{pmatrix}
D^4-f_2+\frac{K_{12}}{2}	&  0\\
0	& D^4+f_2+\frac{K_{12}}{2}
\end{pmatrix}
$$
$$
G_9=\begin{pmatrix}
D^8+D^4 \left(f_2-\frac{K_{12}}{2}\right)+\frac{1}{4} (K_{12}-2 f_2)^2	& 0 \\
0	& D^8-\frac{1}{2} D^4 (2 f_2+K_{12})+\frac{1}{4} (2 f_2+K_{12})^2
\end{pmatrix}
$$

Secondly, consider the case when $K_{11}=0$, we have 
$$
f_4=\frac{1}{24} (K_{10}-4 f_2 v_0); f_2=\pm \frac{\sqrt{3} K_{12}}{2 I};
$$
In this case also $K_{14}\neq 0$, and we see that although the expression of $f_4$ are different from the first case, but the matrix form are the same (after substitution $K_{11}=0$ into the expression of $H$ in the first case). So we just uniform these two cases.

\subsubsection{$\nu=2$ case}

Assume $\nu=2$, then we have $f_1=f_2=0,f_3\neq 0$, consider \eqref{E:Gr non self}, here 
\begin{multline*}
	K_{14}-2K_{10}f+6K_{12}f^2+2K_{11}f^3-f^4+f''^2-2f'f'''
	\\=(K_{10} K_{11}+3 K_{12}^2)-2 x^3 (f_3 (24 f_4+K_{10}))+(\cdots)x^4
\end{multline*}
with
$$
2f'^2=18f_3^2x^4+\cdots
$$
Since $V(x)\in \mathbf{C}[[x]]$, we know $K_{14}=K_{10} K_{11}+3 K_{12}^2=0$, and $f_4=-\frac{K_{10}}{24}$.

Use $T_1$ determined by \eqref{E:T12} to partially normalise $H$, with condition \eqref{E:T1 invertible} and 
\begin{align}
	\label{E:Second conjugation nonself v=2}
	\left\{\begin{aligned}
		s_{-1,1}&=0;s_{-1,2}=-s_{-1,0};
		\\s_{-2,0}&=\frac{v_0}{4};s_{-2,1}=(-\frac{1}{8}+\frac{I}{8}) v_0;
		\\s_{-3,0}&=\frac{1}{4} v_0 s_{-1,0};
		\\ s_{-1,0}&=1;
		\\s_{0,2}&= \frac{3 f_3+2 I}{3 f_3-2 I}\quad \text{or}\quad 0;
		\end{aligned}\right\}
\end{align}

We have two possibilities:
\begin{enumerate}
	\item $f_4\neq 0$, hence $K_{10}\neq 0$, we have 
\begin{multline*}
	H=D^6+\frac{1+I}{4}K_{12}(-IA_1+A_3)D^2+(A_1-A_3)D
	\\+\frac{-1+I}{8}K_{10}(A_1+IA_3)+\frac{3K_{12}^2}{2K_{10}}(A_1-A_3)D^{-1}
	\\+(\frac{1}{16}-\frac{I}{16}) K_{12}^2 ((1+I)-A_1-I A_3)D^{-2}
\end{multline*}

Since $K_{14}= 0,K_{10}\neq 0$ in this case, we know the sheaf $\mathcal{F}\cong \mathcal{A}\otimes \mathcal{O}(q)$, with $q=q_1=q_2=(-\frac{K_{12}}{2},\frac{K_{10}}{2})$. The matrix form looks like
$$
\left(
\begin{array}{cccc}
	-\frac{K_{10}}{4} & 0 & D^4+\frac{K_{12}}{2} & 0 \\
	\frac{3 I K_{12}^2}{K_{10}} & -\frac{K_{10}}{4} & 2 I & D^4+\frac{K_{12}}{2} \\
	\frac{1}{4} \left(4 D^8-2 D^4 K_{12}+K_{12}^2\right) & 0 & \frac{K_{10}}{4} & 0 \\
	-2 I \left(D^4-K_{12}\right) & \frac{1}{4} \left(4 D^8-2 D^4 K_{12}+K_{12}^2\right) & -\frac{3 I K_{12}^2}{K_{10}} & \frac{K_{10}}{4} \\
\end{array}
\right)
$$
\item $f_4=0$, then $K_{10}=0$, then the curve $X$ is singular of cuspidal case. So that $K_{12}=0$, and $v_0=\frac{-10f_5}{f_3}$. We have
$$
H=D^6+(A_1-A_3)D
$$
$\mathcal{F}\cong \mathcal{U}$ with $q=s=(0,0)$, the matrix form looks like 
$$
\left(
\begin{array}{cccc}
	0 & 0 & D^4 & 0 \\
	0 & 0 & 2 I & D^4 \\
	D^8 & 0 & 0 & 0 \\
	-2 I D^4 & D^8 & 0 & 0 \\
\end{array}
\right)
$$

\end{enumerate}

\subsubsection{$\nu= 3$ case}

Assume $\nu=3$, then $f_1=f_2=f_3=0,f_4\neq 0$. Look at the coefficient of $H^2-D^{12}+a=0$ in $A_2$ of $D^{-1}$, we get $f_5=0$. Consider \eqref{E:Gr non self}, here 
\begin{multline*}
	K_{14}-2K_{10}f+6K_{12}f^2+2K_{11}f^3-f^4+f''^2-2f'f'''
	\\=(K_{10} K_{11}+3 K_{12}^2)-2f_4(K_{10}+24f_4)x^4+(\cdots)x^6+\cdots
\end{multline*}
with
$$
2f'^2=32f_4^2x^6+\cdots
$$
We know that $K_{14}=0$ with $f_4=-\frac{K_{10}}{24}$. Use $T_1, T_2$ determined by \eqref{E:T12} to partially normalise $H$, with  condition \eqref{E:T1 invertible} and 
\begin{align}
	\label{E:Second conjugation nonself v=3}
	\left\{\begin{aligned}
		s_{-1,1}&=0;s_{-1,2}=-s_{-1,0};
		\\s_{-2,0}&=\frac{v_0}{4};s_{-2,1}=(-\frac{1}{8}+\frac{I}{8}) v_0;
		\\s_{-3,0}&=\frac{1}{4} v_0 s_{-1,0};
		\\ s_{-1,0}&=1;
		\\s_{0,2}&=  0;
		\end{aligned}\right\}
\end{align}
The modified  $H$ is 
\begin{multline*}
	H=D^6+\frac{1+I}{4}K_{12}(-IA_1+A_3)D^2
	\\+\frac{-1+I}{8}K_{10}(A_1+IA_3)+(\frac{1}{16}-\frac{I}{16}) K_{12}^2 ((1+I)-A_1-I A_3)D^{-2}
\end{multline*}

$f_4\neq 0$ so that $K_{10}\neq 0$, here $\nu=3$, then $\mathcal{F}\cong \mathcal{O}(q)\oplus \mathcal{O}(q)$, with $q=q_1=q_2=(-\frac{K_{12}}{2},\frac{K_{10}}{2})$. The matrix form looks like:
	$$
	\left(
	\begin{array}{cccc}
		-\frac{K_{10}}{4} & 0 & D^4+\frac{K_{12}}{2} & 0 \\
		0 & -\frac{K_{10}}{4} & 0 & D^4+\frac{K_{12}}{2} \\
		D^8-\frac{D^4 K_{12}}{2}+\frac{K_{12}^2}{4} & 0 & \frac{K_{10}}{4} & 0 \\
		0 & D^8-\frac{D^4 K_{12}}{2}+\frac{K_{12}^2}{4} & 0 & \frac{K_{10}}{4} \\
	\end{array}
	\right)
	$$

\subsection{The relation of normal forms corresponding to isomorphic sheaves}
\label{S:last_subsection}

According to theorem \ref{T:parametrisation} we know that  partially normalised normal forms in matrix form which correspond to isomorphic sheaves  should be conjugated by a block diagonal matrix. Let $H, H'$ be such normal forms,  $\hat{H}:=\psi\circ\hat{\Phi}(H),\hat{H'}:=\psi\circ\hat{\Phi}(H')$ are their matrix presentation, then 
$$
\hat{H'}=T^{-1}\hat{H}T.
$$

In this section we find all the transfer matrices $T$ for the list of partially normalised normal forms corresponding to different sheaves calculated in previous sections.  Let's collect them first:
\begin{enumerate}
	\item $\mathcal{F}\cong \mathcal{S}\oplus \mathcal{S}$. This only happens in self-adjoint case when $\nu = 3$, and the curve is singular.
	$$\hat{H}=\left(
	\begin{array}{cccc}
		0 & 0 & D^4 & 0 \\
		0 & 0 & 0 & D^4 \\
		D^8 & 0 & 0 & 0 \\
		0 & D^8 & 0 & 0 \\
	\end{array}
	\right)$$
	\item $\mathcal{F}\cong B_q$. This only happens in self-adjoint case, the curve is singular, with two possibilities: $w_1\neq 0,w_0=a=0$ or $w_1=w_2=a=0,w_3\neq 0$. Denote 
	$$
	\hat{H}_1=\left(
	\begin{array}{cccc}
		0 & -2 & D^4 & 0 \\
		0 & 0 & 0 & D^4 \\
		D^8 & 0 & 0 & 2 \\
		0 & D^8 & 0 & 0 \\
	\end{array}
	\right)
	$$
	$$
	\hat{H}_2=\left(
	\begin{array}{cccc}
		0 & 0 & D^4 & 0 \\
		2 & 0 & 0 & D^4 \\
		D^8 & 0 & 0 & 0 \\
		0 & D^8 & -2 & 0 \\
	\end{array}
	\right)
	$$
	The transfer matrix is 
	$$
	T=\left(
	\begin{array}{cccc}
		0 & 1 & 0 & 0 \\
		-1 & 0 & 0 & 0 \\
		0 & 0 & 0 & 1 \\
		0 & 0 & -1 & 0 \\
	\end{array}
	\right)
	$$
	with $\hat{H}_1=T^{-1}\hat{H}_2T$.
	\item $\mathcal{F}\cong \mathcal{O}(q_1)\oplus \mathcal{O}(q_2)$. The curve is not singular, it can happen both in self-adjoint case, and non self-adjoint case:
	
	This one occurs in self-adjoint $\nu=0,8a+w_0^3\neq 0$ case, where $q_+=(-\frac{w_0}{2},\frac{\sqrt{-8a-w_0^3}}{2}),q_-=(-\frac{w_0}{2},\frac{-\sqrt{-8a-w_0^3}}{2})$.
	$$
	\hat{H}_1=\left(
	\begin{array}{cccc}
		0 & -2 & D^4+\frac{w_0}{2} & 0 \\
		\frac{1}{16} (8 a+w_0^3) & 0 & 0 & D^4+\frac{w_0}{2} \\
		\frac{1}{4} ( 4D^8-2 w_0D^4+w_0^2)& 0 & 0 & 2 \\
		0 & \frac{1}{4} \left(4 D^8-2 w_0D^4 +w_0^2\right) & \frac{1}{16} \left(-8 a-w_0^3\right) & 0 \\
	\end{array}
	\right)
	$$
	This one occurs in self-adjoint $\nu=1,8a+w_0^3\neq 0$ case, where $q_+=(-\frac{w_0}{2},\frac{\sqrt{-8a-w_0^3}}{2}),q_-=(-\frac{w_0}{2},\frac{-\sqrt{-8a-w_0^3}}{2})$.
	$$
	\hat{H}_2=\left(
	\begin{array}{cccc}
		\frac{\sqrt{-8 a-w_0^3}}{2 \sqrt{2}} & 0 & D^4+\frac{w_0}{2} & 0 \\
		0 & -\frac{\sqrt{-8 a-w_0^3}}{2 \sqrt{2}} & 0 & D^4+\frac{w_0}{2} \\
		D^8-\frac{1}{2} D^4 w_0+\frac{w_0^2}{4} & 0 & -\frac{\sqrt{-8 a-w_0^3}}{2 \sqrt{2}} & 0 \\
		0 & D^8-\frac{1}{2} D^4 w_0+\frac{w_0^2}{4} & 0 & \frac{\sqrt{-8 a-w_0^3}}{2 \sqrt{2}} \\
	\end{array}
	\right)
	$$
	This one occurs in non self-adjoint case, $\nu=0,K_{14}\neq 0$, where $q_{1,2}=\frac{1}{2} (\sqrt{-K_{14}}-K_{12}),\frac{1}{2} (K_{10}\pm K_{11} \sqrt{-K_{14}}))$. Here we use the same notations of $G_1,G_2,G_3,\Omega_1,\Omega_2,G_7,G_8,G_9$ like the discussion before:
    $$\hat{H}_3=\begin{pmatrix}
		G_1	& G_2 \\
		G_3	& -G_1
	\end{pmatrix}$$

	This one occurs in non self-adjoint case, $\nu=1,K_{14}\neq 0$, where $q_{1,2}=\frac{1}{2} (\sqrt{-K_{14}}-K_{12}),\frac{1}{2} (K_{10}\pm K_{11} \sqrt{-K_{14}}))$ and $f_2=\pm\frac{1}{2}\sqrt{-K_{14}}$.
	$$
	\hat{H}_4=\begin{pmatrix}
		G_7	& G_8 \\
		G_9	& -G_7
	\end{pmatrix}
	$$
	
	Notice that the points $q_{\pm}$ in self-adjoint case can't match up with $q_{1,2}$ in non self-adjoint case: 
	if $\{q_{\pm}\}=q_{1,2}$, we may assume $q_+=q_1,q_-=q_2$, then $K_{10}=0$, hence by $\frac{1}{2} (\sqrt{-K_{14}}-K_{12})=-\frac{w_0}{2}$, we get
	$$
	K_{12}=\frac{1-\sqrt{3}I}{4}w_0
	$$
	
	The transfer matrix is 
	$$
	T_1=\left(
	\begin{array}{cccc}
		-\frac{\sqrt{-8 a-w_0^3}}{4 \sqrt{2}} & 1 & 0 & 0 \\
		\frac{1}{2} & \frac{2 \sqrt{2}}{\sqrt{-8 a-w_0^3}} & 0 & 0 \\
		0 & 0 & -\frac{\sqrt{-8 a-w_0^3}}{4 \sqrt{2}} & 1 \\
		0 & 0 & \frac{1}{2} & \frac{2 \sqrt{2}}{\sqrt{-8 a-w_0^3}} \\
	\end{array}
	\right)
	$$
	with $T_1^{-1}\hat{H}_2T=\hat{H}_1$.
	
	and 
	$$
	T_2=\left(
	\begin{array}{cccc}
		1 & \frac{2}{\sqrt{-K_{14}}} & 0 & 0 \\
		\frac{1}{4} \left(\sqrt{-K_{14}}+I K_{12}\right) & -\frac{\sqrt{-K_{14}}-I K_{12}}{2 \sqrt{-K_{14}}} & 0 & 0 \\
		0 & 0 & 1 & \frac{2}{\sqrt{-K_{14}}} \\
		0 & 0 & \frac{1}{4} \left(\sqrt{-K_{14}}+I K_{12}\right) & -\frac{\sqrt{-K_{14}}-I K_{12}}{2 \sqrt{-K_{14}}} \\
	\end{array}
	\right)
	$$
	with $T_2^{-1}\hat{H_3}T_2=\hat{H}_4$.
	\item $\mathcal{F}\cong \mathcal{O}(q)\oplus \mathcal{O}(q)$, it can happen both in self-adjoint case and non self-adjoint case with $\nu=3$.
	
	This one occurs in self-adjoint case, $q_+=q_-=q=(-\frac{w_0}{2},0)$, with $8a+w_0^3=0$
	$$
	\hat{H}_1=\left(
	\begin{array}{cccc}
		0 & 0 & D^4-\sqrt[3]{a} & 0 \\
		0 & 0 & 0 & D^4-\sqrt[3]{a} \\
		a^{2/3}+\sqrt[3]{a} D^4+D^8 & 0 & 0 & 0 \\
		0 & a^{2/3}+\sqrt[3]{a} D^4+D^8 & 0 & 0 \\
	\end{array}
	\right)
	$$
	
	This one comes from non self-adjoint case $K_{14}=0, K_{10}\neq 0$, $q=q_1=q_2=(-\frac{K_{12}}{2},\frac{K_{10}}{2})$,
	$$
	\hat{H}_2=\left(
	\begin{array}{cccc}
		-\frac{K_{10}}{4} & 0 & D^4+\frac{K_{12}}{2} & 0 \\
		0 & -\frac{K_{10}}{4} & 0 & D^4+\frac{K_{12}}{2} \\
		D^8-\frac{D^4 K_{12}}{2}+\frac{K_{12}^2}{4} & 0 & \frac{K_{10}}{4} & 0 \\
		0 & D^8-\frac{D^4 K_{12}}{2}+\frac{K_{12}^2}{4} & 0 & \frac{K_{10}}{4} \\
	\end{array}
	\right)
	$$
	We see here $\hat{H}_1,\hat{H}_2$ match only when $K_{10}=0$ (so it can't happen).
	\item $\mathcal{F}\cong\mathcal{A}\otimes \mathcal{O}(q)$, it can happen both in self-adjoint case and non self-adjoint case with $\nu\neq 3$.
	
	This comes from self-adjoint case $\nu=0,w_0^3+8a=0$, $q=(-\frac{w_0}{2},0)$,
	$$
	\hat{H}_1=\left(
	\begin{array}{cccc}
		0 & -2 & D^4-\sqrt[3]{a} & 0 \\
		0 & 0 & 0 & D^4-\sqrt[3]{a} \\
		a^{2/3}+\sqrt[3]{a} D^4+D^8 & 0 & 0 & 2 \\
		0 & a^{2/3}+\sqrt[3]{a} D^4+D^8 & 0 & 0 \\
	\end{array}
	\right)
	$$
	This comes from self-adjoint case, $\nu=2,w_0^3+8a=0$ $q=(-\frac{w_0}{2},0)$,
	$$
	\hat{H}_2=\left(
	\begin{array}{cccc}
		0 & 0 & D^4-\sqrt[3]{a} & 0 \\
		2 & 0 & 0 & D^4-\sqrt[3]{a} \\
		a^{2/3}+\sqrt[3]{a} D^4+D^8 & 0 & 0 & 0 \\
		0 & a^{2/3}+\sqrt[3]{a} D^4+D^8 & -2 & 0 \\
	\end{array}
	\right)
	$$
	
	This comes from non self-adjoint case, $\nu=0,K_{14}=0,K_{10}\neq 0$, $q_1=q_2=(-\frac{K_{12}}{2},\frac{K_{10}}{2})$, here use the same notations $G_4,G_5,G_6$ as before
	$$\hat{H}_3=\begin{pmatrix}
		G_4	& G_5 \\
		G_6	& -G_4
	\end{pmatrix}$$
	
	This comes from non self-adjoint case, $\nu=2,K_{14}=0,K_{10}\neq 0$, $q=q_1=q_2=(-\frac{K_{12}}{2},\frac{K_{10}}{2})$,
	$$
	\hat{H}_4=\left(
	\begin{array}{cccc}
		-\frac{K_{10}}{4} & 0 & D^4+\frac{K_{12}}{2} & 0 \\
		\frac{3 I K_{12}^2}{K_{10}} & -\frac{K_{10}}{4} & 2 I & D^4+\frac{K_{12}}{2} \\
		\frac{1}{4} \left(4 D^8-2 D^4 K_{12}+K_{12}^2\right) & 0 & \frac{K_{10}}{4} & 0 \\
		-2 I \left(D^4-K_{12}\right) & \frac{1}{4} \left(4 D^8-2 D^4 K_{12}+K_{12}^2\right) & -\frac{3 I K_{12}^2}{K_{10}} & \frac{K_{10}}{4} \\
	\end{array}
	\right)
	$$
	We can see here the self-adjoint case can't match up with non self-adjoint case since $K_{10}\neq 0$. The relation within self-adjoint and non self-adjoint are as follows:
	$$
	T_1=\left(
	\begin{array}{cccc}
		0 & 1 & 0 & 0 \\
		-1 & 0 & 0 & 0 \\
		0 & 0 & 0 & 1 \\
		0 & 0 & -1 & 0 \\
	\end{array}
	\right)
	$$
	with $T_1^{-1}\hat{H}_2T_1=\hat{H}_1$.
	
	In non self-adjoint case, since $K_{14}=3K_{12}^2+K_{10}K_{11}=0$ but $K_{10}\neq 0$, so that $K_{12}\neq 0$, we have 
	$$
	T_2=\left(
	\begin{array}{cccc}
		\frac{\left(\frac{1}{4}+\frac{I}{4}\right) K_{12}}{\sqrt{2}} & -\frac{1-I}{\sqrt{2}} & 0 & 0 \\
		0 & -\frac{(2-2 I) \sqrt{2}}{K_{12}} & 0 & 0 \\
		0 & 0 & \frac{\left(\frac{1}{4}+\frac{I}{4}\right) K_{12}}{\sqrt{2}} & -\frac{1-I}{\sqrt{2}} \\
		0 & 0 & 0 & -\frac{(2-2 I) \sqrt{2}}{K_{12}} \\
	\end{array}
	\right)
	$$
	with $T_2^{-1}\hat{H}_4T_2=\hat{H}_3$.
	\item $\mathcal{F}\cong\mathcal{U}$ case, the curve is singular, it can only happen in non self-adjoint case when $K_{14}=K_{10}=0$.
	
	This one comes from $\nu=0$ case
	$$
	\hat{H}_1=\left(
	\begin{array}{cccc}
		0 & 0 & D^4 & 2 \\
		0 & 0 & 0 & D^4 \\
		D^8 & -2 D^4 & 0 & 0 \\
		0 & D^8 & 0 & 0 \\
	\end{array}
	\right)
	$$
	
	This one comes from $\nu=2$ case
	$$
	\hat{H}_2=\left(
	\begin{array}{cccc}
		0 & 0 & D^4 & 0 \\
		0 & 0 & 2 I & D^4 \\
		D^8 & 0 & 0 & 0 \\
		-2 I D^4 & D^8 & 0 & 0 \\
	\end{array}
	\right)
	$$
	
	The relation matrix is 
	$$
	T=\left(
	\begin{array}{cccc}
		0 & 1 & 0 & 0 \\
		I & 0 & 0 & 0 \\
		0 & 0 & 0 & 1 \\
		0 & 0 & I & 0 \\
	\end{array}
	\right)
	$$
	with $T^{-1}\hat{H}_2T=\hat{H}_1$.
\end{enumerate}

\noindent J. Guo,  School of Mathematical Sciences, Peking University and Sino-Russian Mathematics Center,  Beijing, China 
\\ 
\noindent\ e-mail:
$123281697@qq.com$

\vspace{0.5cm}

\noindent A. Zheglov, Lomonosov Moscow State  University, Faculty
of Mechanics and Mathematics, Department of differential geometry
and applications, Leninskie gory, GSP, Moscow, \nopagebreak 119899,
Russia\\
National Research University Higher School of Economics\\
\\ \noindent e-mail
 $alexander.zheglov@math.msu.ru$, $abzv24@mail.ru$


\begin{thebibliography}{99}


\bibitem{BHY} B. Bakalov et al., {\em Commun. Math. Phys.} 190, No. 2, 331--373

\bibitem{BZ} I. Burban, A. Zheglov, {\em Fourier-Mukai transform on Weierstrass cubics and commuting differential operators}, International Journal of Mathematics. - 2018. - P. 1850064

\bibitem{BurbanZheglov2017} I. Burban, A. Zheglov, {\em Cohen-Macaulay modules over the algebra of planar quasi-invariants and Calogero-Moser systems}, Proceedings of LMS, 2020. - Vol. 121, no. 4. - P. 1033-1082. 

\bibitem{BC1} J.~Burchnall, T.~Chaundy, \emph{Commutative ordinary differential operators},
 Proc.~London Math.~Soc.~\textbf{21}  (1923) 420--440.
 
\bibitem{BC2} J.~Burchnall, T.~Chaundy, \emph{Commutative ordinary differential operators},
Proc.~Royal Soc.~London (A) \textbf{118}, 557--583 (1928).

\bibitem{BC3} J.~Burchnall, T.~Chaundy, \emph{Commutative ordinary differential operators. II: The identity $P^n =Q^m$},
Proc.~Royal Soc.~London (A) \textbf{134}, 471--485 (1931).

\bibitem{Casper} W. R. Casper et al., {\em A Burchnall-Chaundy-Krichever Theory for Fractional Differential Operators}, Preprint, arXiv:2108.12010 [math.RA] (2021)

\bibitem{Casper2} W. R. Casper and M. T. Yakimov, {\em J. Reine Angew. Math. 766}, 151--194 

\bibitem{Dixmier} J.~Dixmier, \textit{Sur les alg\`ebres de Weyl},  Bull.~Soc.~Math.~France \textbf{96} (1968) 209--242.

\bibitem{Dr} V.~Drinfeld, \emph{Commutative subrings of certain noncommutative rings}, Funct.~Anal.~Appl.~\textbf{11} (1977), no.~1, 11--14, 96.


\bibitem{G} P.G. Grinevich, {\it Rational solutions for the equation of commutation of differential operators}, Functional Anal. Appl., {\bf 16}:1 (1982), 15--19.

\bibitem{Grun} F.~Gr\"unbaum, \textit{Commuting pairs of linear ordinary differential operators of orders four and six}, Phys.~D \textbf{31} (1988), 424--433.


\bibitem{GZ} J. Guo, A. Zheglov, {\it On Some Questions around Berest's Conjecture}, Math. Notes, 116:2 (2024), 238–251

\bibitem{GZ24} J. Guo, A. Zheglov, {\it Normal forms for ordinary differential operators, I}, preprint, https://arxiv.org/abs/2406.14414

\bibitem{HL} D. Huybrechts and M. Lehn, {\it The geometry of moduli spaces of sheaves}, 2nd ed., Cambridge Math. Lib. Cambridge, Cambridge Univ. Press 2010, xviii +325 pp.


\bibitem{Kr1} I.~Krichever, \textit{Methods of algebraic geometry in the theory of nonlinear equations}, Uspehi Mat.~Nauk \textbf{32} (1977), no.~6 (198), 183--208, 287.

\bibitem{Kr2} I.~Krichever, \textit{Commutative rings of ordinary linear differential operators}, Func.~Anal.~Appl.~\textbf{12} no.~3 (1978), 175--185.

\bibitem{KN} I.~Krichever, S.~Novikov, \textit{Holomorphic bundles over algebraic curves and nonlinear equations}, Russian Math.~Surveys, {\bf 35}:6 (1980), 47--68.

\bibitem{KOZ2014} H.~Kurke, D.~Osipov, A.~Zheglov, \textit{Commuting differential operators and higher--dimensional algebraic varieties}, Selecta Math.~{\bf 20} (2014),  1159--1195.


\bibitem{MZh} A.~E. Mironov, A.~B. Zheglov, \textit{Commuting ordinary differential operators with polynomial coefficients and automorphisms of the first Weyl algebra}, Int. Math. Res. Not. IMRN, {\bf 10}, 2974--2993 (2016).

\bibitem{SMZh} A.~E. Mironov, B. Saparbaeva, A.~B. Zheglov, {\it Commuting krichever–novikov differential operators with polynomial coefficients}, Siberian Mathematical Journal. — 2016. — Vol. 57. — P. 819–10. 

\bibitem{Mokh} O.I. Mokhov, \textit{Commuting differential operators of rank 3 and nonlinear differential equations},  Mathematics of the USSR-Izvestiya {\bf 35}:3 (1990), 629--655.


\bibitem{Mu} M.~Mulase,  \emph{Category of vector bundles on algebraic curves and infinite-dimensional Grassmannians}, Internat.~J.~Math.~\textbf{1} (1990), no. 3, 293--342.

\bibitem{Mumford_article} D.~Mumford,  \emph{An algebro--geometric construction of commuting operators and of solutions to the Toda lattice equation, Korteweg deVries equation and related nonlinear equation},  Proceedings of the International Symposium on Algebraic Geometry, 115--153, Kinokuniya Book Store, Tokyo (1978).


\bibitem{Os} D.V. Osipov,
{\em  The Krichever correspondence for algebraic varieties}
(Russian), Izv. Ross. Akad. Nauk Ser. Mat. 65, 5 (2001), 91-128;
English translation in Izv. Math. 65, 5 (2001), 941-975.

\bibitem{Parshin2001} A. N. Parshin,~{\em Krichever correspondence for algebraic surfaces}, Funct. Analysis and Its Applications, 2001, 35:1, 74-76



\bibitem{Previato2019} E. Previato, S.L. Rueda, M.-A. Zurro, {\it Commuting Ordinary Differential Operators and the Dixmier Test},  SIGMA 15 (2019), 101, https://doi.org/10.3842/SIGMA.2019.101 

\bibitem{PW} E. Previato, G. Wilson, \textit{Differential operators and rank $2$
 bundles over elliptic curves}, Compositio Math. \textbf{81}:1 (1992), 107-119.
 
\bibitem{Rego2} C. J. Rego, {\it Comment. Math. Helv.} 57, 226--236 (1982; )


\bibitem{Q} Quandt I., {\em On a relative version of the Krichever correspondence}, Bayreuther Mathematische Schriften 52, 1-74 (1997).

\bibitem{SW} G.~Segal, G.~Wilson, \emph{Loop groups and equations of KdV type},  Inst. Hautes \'Etudes Sci.~Publ.~Math. no.~\textbf{61} (1985), 5--65.

\bibitem{Wilson} G. Wilson, {\it Algebraic curves and soliton equations}, in Geometry Today (Rome, 1984), Progr. Math., Vol. 60, Birkh\'auser Boston, MA, 1985, 303-329

\bibitem{Verdier} J.-L.~Verdier, \emph{\'Equations diff\'erentielles alg\'ebriques}, S\'eminaire Bourbaki, 30e ann\'ee (1977/78), Exp. no.~512, 101-122, Lecture Notes in Math.~\textbf{71}, Springer (1979).


\bibitem{Schur} I.~Schur, \emph{\"Uber vertauschbare lineare Differentialausdr\"ucke},
Sitzungsber.~Berl.~Math.~Ges.~\textbf{4}, 2--8 (1905).

\bibitem{Zheglov2013} A.B. ~Zheglov, \textit{On rings of commuting differential operators},
St. Petersburg Math. J.~\textbf{25} (2014),  775--814.

\bibitem{A.Z} A.B. Zheglov, {\it Schur-Sato theory for quasi-elliptic rings} Proc. Steklov Inst. Math., 320 (2023), 115–160 (special issue dedicated to the memory of A.N. Parshin)

\bibitem{Zheglov25} A.B. Zheglov, {\it The conjecture of Dixmier for the first Weyl algebra is true}, https://arxiv.org/abs/2410.06959


\bibitem{Zheglov_book} A. B. Zheglov, {\em Algebra, geometry and analysis of commuting ordinary differential operators}, Publ. house of the Board of trustees of the Faculty of mechanics and mathematics, Moscow state univ., 2020. - 217,  ISBN 978-5-9500628-4-1\\ 
can be found e.g. at 
https://www.researchgate.net/publication/\\
340952902AlgebraGeometryandAnalysisofCommutingOrdinaryDifferentialOperators 







\end{thebibliography}
\end{document}